\numberwithin{equation}{section}
\definecolor{popblue}{RGB}{55,115,255}
\definecolor{lightbl}{RGB}{155,205,255}
\definecolor{depthbl}{RGB}{145,215,255}
\definecolor{fancyre}{RGB}{225,55,115}
\definecolor{darkblu}{RGB}{15,75,185}
\definecolor{mellowy}{RGB}{225,225,35}
\renewcommand{\tilde}[1]{\widetilde{#1}}
\renewcommand{\Bar}{\overline}
\newcommand{\R}{\mathbb{R}}
\newcommand{\N}{\mathbb{N}}
\newcommand{\Z}{\mathbb{Z}}
\newcommand{\C}{\mathbb{C}}
\renewcommand{\P}{\mathbb{P}}
\newcommand{\T}{\mathbb{T}}
\newcommand{\m}{\mathrm}
\newcommand{\lv}{\lVert}
\newcommand{\rv}{\rVert}
\newcommand{\al}{\alpha}
\newcommand{\be}{\beta}
\newcommand{\es}{\varnothing}
\newcommand{\ep}{\varepsilon}
\newcommand{\f}{\frac}
\newcommand{\gam}{\gamma}
\newcommand{\del}{\delta}
\newcommand{\pd}{\partial}
\newcommand{\grad}{\nabla}
\newcommand{\bpm}{\begin{pmatrix}}
\newcommand{\epm}{\end{pmatrix}}
\newcommand{\emb}{\hookrightarrow}
\renewcommand{\le}{\leqslant}
\renewcommand{\ge}{\geqslant}
\newcommand{\tnorm}[1]{\lv#1\rv}
\newcommand{\p}[1]{\left(#1\right)}
\newcommand{\bp}[1]{\Big(#1\Big)}
\newcommand{\tp}[1]{(#1)}
\newcommand{\babs}[1]{\Big|#1\Big|}
\newcommand{\tabs}[1]{|#1|}
\newcommand{\bsb}[1]{\Big[{#1}\Big]}
\newcommand{\tsb}[1]{[{#1}]}
\newcommand{\bcb}[1]{\Big\{{#1}\Big\}}
\newcommand{\tcb}[1]{\{{#1}\}}
\newcommand{\z}[1]{\mathring{#1}}
\renewcommand{\bf}[1]{\mathbf{#1}}
\newcommand{\ii}{\mathsf{i}}
\newtheorem{prop}{\color{popblue}{Proposition}}[section]
\newtheorem{lem}[prop]{\color{popblue}{Lemma}}
\newenvironment{customthm}[1]
{\innercustomthm}
{\endinnercustomthm}
\author{Huy Q. Nguyen}
\address{
	Department of Mathematics\\
	University of Maryland\\
	College Park, MD 20742, USA
}
\email[H. Q. Nguyen]{hnguye90@umd.edu}
\thanks{H. Q. Nguyen was supported by an NSF Grant (DMS \#2205710).}
\author{Noah Stevenson}
\address{
	Department of Mathematics\\
	Princeton University\\
	Princeton, NJ 08544, USA
}
\email[N. Stevenson]{stevenson@princeton.edu}
\thanks{N. Stevenson was supported by an NSF Graduate Research Fellowship}
\DeclareFontFamily{U}{cbgreek}{}
\DeclareFontShape{U}{cbgreek}{m}{n}{
  <-6>    grmn0500
  <6-7>   grmn0600
  <7-8>   grmn0700
  <8-9>   grmn0800
  <9-10>  grmn0900
  <10-12> grmn1000
  <12-17> grmn1200
  <17->   grmn1728
}{}
\DeclareFontShape{U}{cbgreek}{bx}{n}{
  <-6>    grxn0500
  <6-7>   grxn0600
  <7-8>   grxn0700
  <8-9>   grxn0800
  <9-10>  grxn0900
  <10-12> grxn1000
  <12-17> grxn1200
  <17->   grxn1728
}{}
\newcommand{\normalorbold}{%
  \ifnum\pdf@strcmp{\math@version}{bold}=\z@ bx\else m\fi
}
\title[Large traveling waves for Darcy flow]{On large periodic traveling surface waves in porous media}
\subjclass[2020]{Primary 35Q35, 76D27, 76D03; Secondary 76D33, 35F20, 46T20}
\keywords{Darcy's law, Muskat problem, viscous traveling surface waves, large waves}
\begin{document}
\begin{abstract}
    We study large traveling surface waves within a two-dimensional finite depth, free boundary, homogeneous, incompressible and viscous fluid governed by Darcy's law. The fluid is bound by a gravitational force to a flat rigid bottom and meets an atmosphere of constant pressure at the top with its free surface, where it does not experience any capillarity effects. Additionally, the fluid is subject to a fixed, but arbitrarily selected, forcing data profile with variable amplitude. We use the Riemann mapping to equivalently reformulate the resulting two-dimensional free boundary problem as a single one-dimensional fully nonlinear pseudodifferential equation for a function describing the domain's geometry. By discovering a hidden ellipticity in the reformulated equation, we are able to import a global implicit function theorem to  construct a connected set of traveling waves, containing both the quiescent solution and large amplitude members. We find that either solutions continue to exist for arbitrarily large data amplitude or else one of a finite number of meaningful breakdown scenarios must occur. This work stands as the first non perturbative construction of large traveling surface waves in any free boundary viscous fluid without surface tension.
\end{abstract}
\maketitle
\section{Introduction}\label{section on introduction}

\subsection{Equations of motion and traveling ansatz}\label{subsection on equations of motion and traveling ansatz}

In this paper we study a two-dimensional finite depth layer of fluid existing within a porous medium. The fluid is bounded below by an impermeable, flat, and rigid bottom and, at the top, meets an atmosphere of constant pressure with a free boundary.

Let us denote the dynamically evolving fluid domains by $t\mapsto\Omega(t)\subset\R^2$. At each time $t\ge0$ the boundary of $\Omega(t)$ is assumed to have two connected components: $\pd\Omega(t) = \Sigma(t)\sqcup\Sigma_0$, with $\Sigma_0 = \R\times\tcb{0}$ denoting the rigid bottom and $t\mapsto\Sigma(t)\subset\R\times(0,\infty)$ denoting the time-evolving free boundary. 

The Eulerian description of the fluid motion consists of two unknowns: the vector velocity $V(t,\cdot):\Omega(t)\to\R^2$ and the scalar pressure $P(t,\cdot):\Omega(t)\to\R$. In addition to the gravitational force with acceleration $g\ge0$ in the $-e_2$ direction, the fluid is permitted to experience a known vector applied force $F(t,\cdot):\Omega(t)\to\R^2$ in the bulk and a known scalar applied pressure $\Phi(t,\cdot):\Sigma(t)\to\R$ on the surface. The equations in the bulk relating these functions are written as follows:
\begin{equation}\label{ID FB}
    \grad\cdot V(t,\cdot) = 0\text{ and }(\mu/\iota)V(t,\cdot) + \grad P(t,\cdot) = -\rho g e_2 + F(t,\cdot)\text{ in }\Omega(t).
\end{equation}

The first identity in~\eqref{ID FB} is the constraint of incompressibility. The second identity above is known as Darcy's law~\cite{DARCY}. This is an empirically observed relationship between the velocity and the pressure of a fluid flowing through a porous medium, and can be rigorously derived from Stokes equations via homogenization  \cite{MR1020348}. The constants $\mu,\iota,\rho\in\R^+$ are the dynamic viscosity, the quotient of permeability and effective porosity, and the density, respectively.

The bulk equations~\eqref{ID FB} are coupled to a series of boundary conditions. On the bottom we simply have the no penetration condition for the velocity,
\begin{equation}\label{ID FB BBC}
    e_2\cdot V(t,\cdot) = 0\text{ on }\Sigma_0,
\end{equation}
while on the top free surface we have the kinematic condition and balance of pressure:
\begin{equation}\label{ID FB TBC}
    \mathcal{V}\tp{\Sigma(t)}\tp{\cdot} = V(t,\cdot)\cdot n(t,\cdot)\text{ and }P(t,\cdot) = P_{\m{ext}} + \Phi(t,\cdot)\text{ on }\Sigma(t,\cdot).
\end{equation}
The object $\mathcal{V}\tp{\Sigma(t,\cdot)}$ is the scalar normal speed of the free surface $\Sigma(t)$. The outward pointing unit normal vector field is $n(t,\cdot):\Sigma(t)\to\R^2$. Finally,  $P_{\m{ext}}\in\R$ is the fixed constant ambient exterior pressure.

The coupled system of equations~\eqref{ID FB}, \eqref{ID FB BBC}, and~\eqref{ID FB TBC} comprise what is known as the one-phase Muskat problem~\cite{MUSKAT}. For more information on Darcy's law and flow through porous media, the interested reader is referred to the books of Bear~\cite{BEAR,MR3752178}.

Our interest in this work regards the construction and analysis of traveling wave solutions to the above equations. By a traveling wave, we mean a solution to~\eqref{ID FB}, \eqref{ID FB BBC}, and~\eqref{ID FB TBC} that is stationary when viewed in a system of coordinates moving at a constant velocity $\gam$ in the $e_1$ direction. More precisely, we make the ansatz that there exists a stationary domain $\Omega\subset\R^2$, with boundaries $\Sigma_0,\Sigma$ satisfying $\pd\Omega = \Sigma_0\sqcup\Sigma$, such that
\begin{equation}\label{stationary unknown domains}
    \Omega(t) = \tcb{x + t\gam e_1\;:\;x\in\Omega},\;\Sigma(t)  = \tcb{x + t\gam e_1\;:x\in\Sigma},\text{ and }\Sigma_0 = \R\times\tcb{0}\text{ for all }t\ge0.
\end{equation}

On this stationary domain we have the time-independent traveling velocity and pressure unknowns $v:\Omega\to\R^2$ and $p:\Omega\to\R$ that are related to $V$ and $P$ via:
\begin{equation}\label{stationary functional unknowns}
    V(t,x) = v(x - t\gam e_1)\text{ and }P(t,x) = p(x - t\gam e_1)\text{ for all }t\ge0,\;x\in\Omega(t).
\end{equation}
Of course, we also make the ansatz that the supplemental forcing to the system is stationary in the traveling frame as well. More precisely, we suppose that $f:\Omega\to\R^2$ and $\phi:\Sigma\to\R$ are given functions that satisfy
\begin{equation}
    F(t,x) = f(x - t\gam e_1)\text{ and }\Phi(t,x) = \phi(x - t\gam e_1)
\end{equation}
for all $t\ge0$ and appropriate $x$.

The traveling unknowns $\Omega$, $v$, and $p$ are thus related to the given $f$ and $\phi$ via the traveling wave equations for free boundary incompressible Darcy flow:
\begin{equation}\label{traveling FBIDF}
    \begin{cases}
        \grad\cdot v = 0\text{ and }\tp{\mu/\iota}v + \grad p = -\rho g e_2 + f&\text{ in }\Omega,\\
        e_2\cdot v = 0&\text{on }\Sigma_0,\\
        \tp{v - \gam e_1}\cdot\nu = 0\text{ and }p = P_{\m{ext}} + \phi&\text{on }\Sigma,
    \end{cases}
\end{equation}
where $\nu:\Sigma\to\R^2$ denotes the outward pointing unit normal. We emphasize that while the above formulation is time-independent, the domain $\Omega$ is still one of the unknowns of the problem.

Our goal in this work is  to construct and analyze large amplitude solutions to system~\eqref{traveling FBIDF}. We shall think of the supplemental forcing $f$ and $\phi$ as having an arbitrary but fixed profile $\mathsf{f}$ and $\upphi$ (not both identically zero), but tunable scalar strength $\kappa\ge0$ so that $f = \kappa\mathsf{f}$ and $\phi = \kappa\upphi$. When $\kappa = 0$ the system~\eqref{traveling FBIDF} admits a unique quiescent and flat equilibrium solution. By an implicit function theorem argument, \cite{MR4690615} proved that for small $\kappa>0$ there is a locally unique curve of nontrivial and small amplitude solutions $\kappa\mapsto\tp{\Omega_\kappa,v_\kappa,p_\kappa}$. We shall use global continuation based on the Leray-Schauder degree to extend this local mapping to a global connected set of solutions. In doing so, we will obtain large traveling waves that are not close to any equilibrium; moreover, along the connected set of solutions we prove that either arbitrarily large waves continue to exist or a else a geometrically meaningful quantity related to the domain $\Omega$ is becoming arbitrarily large. 

Despite the superficially simple appearance of the system of equations~\eqref{traveling FBIDF}, this task of constructing large solutions is not a  trivial undertaking, even if we ignore the immediate complication that the domain itself is one of the unknowns. Note that there are not any effects of surface tension and so we do not have available the extra two derivative regularization present within the corresponding system with positive capillarity. In fact, in a certain sense to be elucidated later, system~\eqref{traveling FBIDF} is not even unconditionally `elliptic'. As a consequence, we require that certain inequalities (see~\eqref{conditions for the hidden ellipticity}) relating the gravitational acceleration, the wave speed, and the forcing data $f$ and $\phi$ be satisfied in order to crucially exploit a `hidden ellipticity' upon which our construction hinges.

\subsection{Literature Survey}\label{subsection on literature survey}

The literature regarding the dynamical problem~\eqref{ID FB}, \eqref{ID FB BBC}, and~\eqref{ID FB TBC} and its closely related variants, e.g. in two or three spatial dimension, with or without surface tension, consisting of one or multiple layers, etc. is diverse, vast, and witnessing recent flourish. Accordingly, since our focus is specifically on traveling waves, we make no attempt here at a comprehensive review of the full dynamic problem. Rather, we only mention the following results on local-in-time well-posedness~\cite{MR2318314,MR2313156,MR3171344,MR2753607,MR3071395,MR3415681,MR3861893,MR4131404,MR4097324,MR4242131,MR4541917,MR4227171,MR4090462}, on global-in-time well-posedness nearby equilibria~\cite{MR2998834,MR3595492,MR3899970,MR3869383,GWPMSSC,MR4363243,MR4487512,MR4348695}, on global-in-time well-posedness for large data~\cite{MR4655356,DGNGWP3D,MR4581109}, and on instabilities and singularity formation~\cite{MR2993754,MR3215843,MR3048596,MR3519969}; see also the survey articles~\cite{MR3608884,MR4064479}.

The porous medium traveling wave problem~\eqref{traveling FBIDF} is of course superficially related to the classical and modern study of traveling wave solutions to the free boundary incompressible Euler equations. This body of literature, that is also called the traveling water wave problem, possesses formidable breadth and depth as it dates back to the observations and conjecture of Stokes~\cite{STOKES} in the mid-nineteenth century. Rather than attempt a summary here, we refer the reader to the survey articles~\cite{Toland_1996,MR1984157,Groves_2004,Strauss_2010,MR4406719}.

One of the key distinctions between the aforementioned setting and our own~\eqref{traveling FBIDF} is that we consider a \emph{viscous} fluid. In stark contrast with the inviscid case, the rigorous mathematical study of the viscous traveling surface wave problem is a much more recent development, beginning with the seminal work of Leoni and Tice~\cite{MR4630597}. They constructed small amplitude solitary traveling wave solutions to the free boundary incompressible Navier-Stokes equations. Subsequent analyses on variations of the Navier-Stokes set up arrived with the work of Stevenson and Tice~\cite{MR4337506,COMPRESS,MR4787851,BORE} and Koganemaru and Tice~\cite{MR4609068,MR4785303}. Aspects of the viscous traveling surface wave problem in the context of the shallow water equations were explored by Stevenson and Tice~\cite{VCV,SWB} and Stevenson~\cite{MR4873829}. For free boundary Darcy flow, small amplitude traveling waves were first constructed by Nguyen and Tice~\cite{MR4690615}. Subsequently, Nguyen~\cite{NGCD} produced the first large amplitude traveling waves for Darcy flow with the effects of surface tension taken into account. Finally, Brownfield and Nguyen~\cite{MR4797733} constructed slowly traveling waves nearby large stationary Darcy flow equilibria. Therefore, in light of these former results, our current work stands as the first non-perturbative construction of large amplitude traveling wave solutions to the equations of free boundary incompressible Darcy flow in the absence of surface tension.

\subsection{Nondimensional, conformal, and nonlocal reformulations}\label{subsection on conformal and nonlocal reformulation}

We now endeavor to reformulate system~\eqref{traveling FBIDF} into an equivalent manifestation of the problem that is more convenient for the subsequent analysis. This is also a necessary step for us to take to properly state the main results of this paper in the forthcoming Section~\ref{subsection on main results and discussion}. We proceed through several steps.

Our initial reformulation of the equations is more cosmetic. It serves to eliminate one of the unknowns, namely the velocity, and to nondimensionalize the system. We are interested in solutions to~\eqref{traveling FBIDF} that are periodic in the $e_1$ direction and we select our characteristic length scale $\m{L}>0$ to be a reference period length of the solution (say the minimal period length of the known data $(f,\phi)$). The characteristic velocity scale $\m{U}>0$ is then defined in terms of $\m{L}$ and the other physical parameters via $\m{U} = \mu\m{L}/\iota\rho$. We define the nondimensional unknown domain $\Upomega$ and pressure $\Pi:\Upomega\to\R$ via 
\begin{equation}\label{nondimensional domain and pressure}
    \Omega = \tcb{x\in\R^2\;:\;x/\m{L}\in\Upomega}\text{ and }\rho\m{U}^2\Pi(x/\m{L}) = p(x) - P_{\m{ext}} + g\tp{e_2\cdot x}\text{ for }x\in\Omega.
\end{equation}
The inverse Froude number $\m{g}$, nondimensional wave speed $\m{c}$, and nondimensional forcing terms $\mathsf{f}$ and $\upphi$ are
\begin{equation}\label{nondimensional numbers and forcing}
    \m{g} = \rho g\iota/\mu,\;\m{c} = \gam \rho\iota/\mu\m{L},\;f(x) = \kappa\tp{\rho\m{U}^2/\m{L}}\mathsf{f}\tp{x/\m{L}},\text{ and }\phi(x) = \kappa\rho\m{U}^2\upphi(x/\m{L}).
\end{equation}
Note that in the above we have introduced a nondimensional forcing strength parameter $\kappa\ge0$.

By taking the divergence of the second equation of~\eqref{traveling FBIDF}, we compute that the equations to be satisfied by $\Upomega$, $\Pi$, and the nondimensional forcing terms are
\begin{equation}\label{nondimensional pressure only form of Darcy flow}
\begin{cases}
    \Delta\Pi = \kappa\grad\cdot\mathsf{f}&\text{in }\Upomega,\\
    e_2\cdot\grad\Pi = \kappa\tp{e_2\cdot \mathsf{f}}&\text{on }\Upsigma_0,\\
    \tp{-\grad\Pi + \kappa\mathsf{f} - \m{c}e_1}\cdot\upnu = 0\text{ and }\Pi = \m{g}\tp{e_2\cdot\m{id}_{\R^2}} + \kappa\upphi&\text{on }\Upsigma,
\end{cases}
\end{equation}
where we have decomposed $\pd\Upomega = \Upsigma\sqcup\Upsigma_0$ with $\Upsigma_0 = \R\times\tcb{0}$ and denoted the outward pointing unit normal on $\Upsigma$ by $\upnu:\Upsigma\to\R^2$.

The dimensional velocity $v:\Omega\to\R^2$ appearing in~\eqref{traveling FBIDF} can be recovered from the solution to~\eqref{nondimensional pressure only form of Darcy flow} via 
\begin{equation}
    v(x) = f(x) - \rho ge_2 - \grad p(x)\text{ for }x\in\Omega.
\end{equation}
with $f$ defined in terms of $\mathsf{f}$ as in~\eqref{nondimensional numbers and forcing} and $p$ defined in terms of $\Pi$ as in~\eqref{nondimensional domain and pressure}.

As we have previously alluded, central difficulty one encounters with systems~\eqref{traveling FBIDF} and~\eqref{nondimensional pressure only form of Darcy flow} is that the fluid domains $\Omega$ and $\Upomega$ themselves are among the unknowns of the problem. Our next reformulation of the system of equations is meant to take a first step in addressing this fact. We shall, in a procedure known as \emph{flattening}, devise a change of coordinates depending on the unknown domain $\Upomega$ itself that allows us to pull the system of equations back to a fixed flat slab domain. The differential operators are complicated by this process. In fact it will be revealed that system~\eqref{nondimensional pressure only form of Darcy flow} is fully nonlinear.

Since we are working in two spatial dimensions, it is convenient to chart the configuration space of possible domains with the help of the Riemann mapping. We shall fix a nondimensional conformal depth parameter $h>0$ and make the ansatz that there exists a biholomorphic mapping $R:\R\times\tp{0,h}\to\Upomega$, extending to a diffeomorphism of the closures, such that
\begin{equation}
    R(x + e_1) = R(x) + e_1\text{ for all }x\in\R\times\tp{0,h}\text{ and }R(\R\times\tcb{0})=\R\times\tcb{0}.
\end{equation}
Thanks to the Cauchy-Riemann equations, such a mapping $R$ is essentially entirely determined by the trace of its second component on the top boundary $\R\times\tcb{h}$. Under suitable regularity assumptions on $\Upomega$, there exists 
\begin{equation}\label{Riemann mapping decomposition}
    \psi\in C^{1,\al}\tp{\T}\text{ satisfying }\int_{\T}\psi = 0\text{ such that }R(w,z) = (w,z) + \tp{\mathcal{E}\psi}(w,z),\;\tp{w,z}\in\R\times\tp{0,h},
\end{equation}
where $0<\al<1$ and $\T = \R/\Z$ denotes the one-dimensional unit diameter torus. The operator $\mathcal{E}$ in~\eqref{Riemann mapping decomposition} is the Cauchy-Riemann solver, defined in terms of the horizontal Fourier transform $\mathscr{F}$ via
\begin{equation}\label{definition of the Cauchy Riemann solver}
    \mathscr{F}\tsb{\mathcal{E}\psi}\tp{\xi,z} = \mathds{1}_{\Z\setminus\tcb{0}}\tp{\xi}\bp{-\ii\f{\xi\cosh\tp{2\pi|\xi|z}}{|\xi|\sinh\tp{2\pi|\xi|h}}e_1 + \f{\sinh\tp{2\pi|\xi|z}}{\sinh\tp{2\pi|\xi|h}}e_2}\mathscr{F}[\psi]\tp{\xi}
\end{equation}
for $\xi\in\Z$ and $z\in\tp{0,h}$. In particular, we have $(\mathcal{E} \psi)_2(\cdot, h)=\psi$. The operator $\mathcal{E}$ is studied in greater detail in Lemma~\ref{lemma on bounds on the Cauchy Riemann solver} below.

The map $R$ discussed above permits us to change coordinates in~\eqref{nondimensional pressure only form of Darcy flow} and reformulate to an equivalent system on the fixed periodic domain $\T\times\tp{0,h}$ for the unknown function $\psi$ as in~\eqref{Riemann mapping decomposition} and the flattened pressure $q:\T\times\tp{0,h}\to\R$ defined by $q = - \m{g}h + \Pi\circ R$. This system for $\tp{\psi,q}$ reads:
\begin{equation}\label{flattened formulation of the darcy flow}
    \begin{cases}
        \Delta q = \frac{\kappa}{2}\tabs{\grad R}^2\tp{\grad\cdot\mathsf{f}}\circ R&\text{in }\T\times\tp{0,h},\\
        \pd_2q = \kappa\tp{\mathsf{f}\circ R}\cdot\tp{\pd_1R}^\perp&\text{on }\T\times\tcb{0},\\
        \pd_2q = \m{c}\pd_1\psi + \kappa\tp{\mathsf{f}\circ R}\cdot\tp{\pd_1 R}^\perp\text{ and }q = \m{g}\psi + \kappa\tp{\upphi\circ R}&\text{on }\T\times\tcb{h},\\
        R(w,z) = (w,z) + \tp{\mathcal{E}\psi}\tp{w,z}&\text{for all }\tp{w,z}\in\T\times\tp{0,h}. 
    \end{cases}
\end{equation}

To conclude this subsection, we now need to discuss one final equivalent reformulation of the traveling wave problem. Remarkably, we are able to reduce~\eqref{flattened formulation of the darcy flow} into a single fully nonlinear and nonlocal pseudodifferential equation for the domain coordinate function $\psi$. The description of this reduction procedure requires the introduction of two additional linear operators. These are the mappings
\begin{equation}\label{help_solve_1}
    S:C^\al\tp{\T\times(0,h)}\times C^{1,\al}\tp{\T}\to C^{1,\al}\tp{\T}\text{ and }G:C^{2,\al}\tp{\T}\to C^{1,\al}\tp{\T}
\end{equation}
with actions
\begin{equation}\label{help_solve_2}
    S(f^1,f^2) = \m{Tr}_{\T\times\tcb{h}}\pd_2u\text{ and }Gf^3 = \m{Tr}_{\T\times\tcb{h}}\pd_2v
\end{equation}
where $u$ and $v$ are the unique solutions to the PDEs
\begin{equation}\label{help_solve_3}
    \begin{cases}
        \Delta u = f^1&\text{in }\T\times\tp{0,h},\\
        u = 0&\text{on }\T\times\tcb{h},\\
        \pd_2u = f^2&\text{on }\T\times\tcb{0},
    \end{cases}
    \text{ and }
    \begin{cases}
        \Delta v =0&\text{in }\T\times\tp{0,h},\\
        v = f^3&\text{on }\T\times\tcb{h},\\
        \pd_2v = 0&\text{on }\T\times\tcb{0},
    \end{cases}
\end{equation}
for given $\tp{f^1,f^2}\in C^\al\tp{\T\times\tp{0,h}}\times C^{1,\al}\tp{\T}$ and $f^3\in C^{2,\al}\tp{\T}$. Here $\m{Tr}_{\T\times\tcb{z}}$ denotes the trace operator onto $\T\times\tcb{z}$ for $z\in\tcb{0,h}$. Notice that $G$ is the flat, finite-depth Dirichlet-to-Neumann operator
\begin{equation}\label{form:DN}
    \mathscr{F}[Gf]\tp{\xi} = 2\pi\tabs{\xi}\tanh\tp{2\pi h\tabs{\xi}}\mathscr{F}[f]\tp{\xi}.
\end{equation}
By using $S$ and $G$ as defined above, system~\eqref{flattened formulation of the darcy flow} is seen to be equivalent to solving first for $\psi$ in the identity
\begin{multline}\label{reformulation for the coordinate function psi}
    G\tp{\m{g}\psi + \kappa\m{Tr}_{\T\times\tcb{h}}}\tp{\upphi\circ R} - \m{c}\pd_1\psi - \kappa\m{Tr}_{\T\times\tcb{h}}\tp{\mathsf{f}\circ R}\cdot\tp{\pd_1 R}^\perp \\
    + \kappa S\tp{\tp{|\grad R|^2/2}\tp{\grad\cdot\mathsf{f}}\circ R,\m{Tr}_{\T\times\tcb{0}}\tp{\mathsf{f}\circ R}\cdot\tp{\pd_1 R}^\perp} = 0,
\end{multline}
with the understanding that $R$ is defined in terms of $\psi$ as in the final equation of~\eqref{flattened formulation of the darcy flow}, and then solving for $q$ in terms of $\psi$ and the data using the first, second, and fourth equations of~\eqref{flattened formulation of the darcy flow}. The latter task is inverting a simple linear elliptic PDE, and so the all of the remaining difficulty is in the analysis and construction of solutions to equation~\eqref{reformulation for the coordinate function psi}. 

In order for the correspondence between the nonlinear pseudodifferential equation~\eqref{reformulation for the coordinate function psi} and the free boundary system~\eqref{traveling FBIDF} to be invertible we require that the flattening map $R$, defined in terms of $\psi$ via~\eqref{Riemann mapping decomposition}, is a diffeomorphism up to the boundary. However, the operator comprising the left hand side of the equation~\eqref{reformulation for the coordinate function psi} continues to be well-defined for all $\psi\in C^{1,\al}\tp{\T}$ with vanishing mean.

\subsection{Main results and discussion}\label{subsection on main results and discussion}

Let us fix $\al\in(0,1)$ and define the open set of admissible domain coordinate functions
\begin{equation}\label{the set of admissible domain coordinate functions}
    \mathcal{U} = \bcb{\psi\in C^{1,\al}\tp{\T}\;:\;\int_{\T}\psi = 0,\;R_\psi:\T\times\tsb{0,h}\to\T\times[0,\infty)\text{ is injective and }\min_{\T\times[0,h]}\det\grad R_\psi>0},
\end{equation}
where for $\psi\in C^{1,\al}\tp{\T}$ the function $R_\psi$ is the associated Riemann mapping
\begin{equation}\label{The Coordinate Maps}
    R_\psi(w,z) = (w,z) + \tp{\mathcal{E}\psi}\tp{w,z}\text{ for all }\tp{w,z}\in\T\times[0,h]
\end{equation}
and $\mathcal{E}$ is the Cauchy-Riemann solver defined in~\eqref{definition of the Cauchy Riemann solver}. For each $\psi\in\mathcal{U}$ the map $R_\psi$, being biholomorphic, is a smooth diffeomorphism of $\T\times\tp{0,h}$ onto its image that extends to a $C^{1,\al}$-diffeomorphism between the closures. For each such $\psi$, the map $R_\psi$ satisfies $e_2\cdot R_\psi = 0$ when restricted to the set $\T\times\tcb{0}$. Thus the bottom boundary of the image $R_\psi\tp{\T\times\tp{0,h}}$ is always $\T\times\tcb{0}$ while the other boundary component is always a $C^{1,\al}$-curve sitting above.

On the set $\mathcal{U}$ we define the maximal distortion function
\begin{equation}\label{the distortion}
    \mathscr{D}:\mathcal{U}\to(0,\infty)\text{ via }\mathscr{D}\tp{\psi} = \sup\bcb{\f{\m{dist}_{\T}\tp{w_0,w_1}}{\m{dist}_{\T\times\R}\tp{R_\psi(w_0,h),R_\psi\tp{w_1,h}}}\;:\;w_0,w_1\in\T\text{ and }w_0\neq w_1}.
\end{equation}
Roughly speaking, $\mathscr{D}\tp{\psi}$ is a measurement of the Riemann mapping diffeomorphism's complexity in the sense that $1/\mathscr{D}\tp{\psi}$ being smaller means that $\psi$ is closer to the complement of $\mathcal{U}$. Notice that the distortion $\mathscr{D}\tp{\psi}$ as defined above only depends on the trace of the flattening map $R_\psi$ on the top boundary $\T\times\tcb{h}$. Thus, since
\begin{equation}\label{parametrization of the top boundary component of the image}
    \T\ni w\mapsto R_\psi\tp{w,h}\in\R^2
\end{equation}
is a conformal parameterization of the top boundary component of $R_\psi\tp{\T\times(0,h)}$, we can interpret $\mathscr{D}\tp{\psi}<\infty$ as a quantitative verification that~\eqref{parametrization of the top boundary component of the image} is injective and has nowhere vanishing derivative. See Proposition~\ref{prop on the set of admissible coordinate functions} below for more information on the set $\mathcal{U}$ and the function $\mathscr{D}$.

We are now in a position to state our main results, whose proofs can be found in Section~\ref{subsection on synthesis} below. The first main theorem discusses the large solutions to the reformulated problem~\eqref{reformulation for the coordinate function psi} that we are able to construct.

\begin{customthm}{1}[Large traveling wave solutions to the reformulated equation]\label{1_MAIN_THM}
    Let $\mathsf{f}\in C^4\tp{\T\times\R;\R^2}$, $\upphi\in C^4\tp{\T\times\R}$, and $\m{g},\m{c}\in[0,\infty)$ satisfy one of the following ellipticity conditions
    \begin{equation}\label{conditions for the hidden ellipticity}
        \tsb{\m{g}>0\text{ and }\pd_2\upphi\ge e_2\cdot\mathsf{f}}\text{ or }\tsb{\m{c}\ne 0\text{ and }\m{c}(\pd_1\upphi-e_1\cdot\mathsf{f})\ge 0}\quad\text{on~}\T\times \R.
    \end{equation}
    Then, there exists a connected solution set $\tp{0,0}\in\mathcal{C}\subset\R\times\z{C}^{1,\al}\tp{\T}$ (the overset circle denotes the average zero subspace) such that for all $\tp{\upkappa,\psi}\in\mathcal{C}$ the equation~\eqref{reformulation for the coordinate function psi} is satisfied with $\kappa = \upkappa^2$. Moreover, we have that $\mathcal{C}\cap\tp{\R\times\mathcal{U}}\supset\tcb{\tp{0,0}}$ and for all $0<\ep\le1/2$ the function
    \begin{equation}\label{the blow up norm with distortion 1}
        \mathcal{C}\cap\tp{\R\times\mathcal{U}}\ni\tp{\upkappa,\psi}\mapsto|\upkappa| + \tnorm{\psi}_{H^{1/2+\ep}\tp{\T}} + \mathscr{D}\tp{\psi}\in\tp{0,\infty}
    \end{equation}
    is unbounded, where $\mathscr{D}$ is the distortion as in~\eqref{the distortion} and $\mathcal{U}$ is as in~\eqref{the set of admissible domain coordinate functions}. In the absence of the bulk force, i.e. $\mathsf{f} = 0$,  the blow-up quantity~\eqref{the blow up norm with distortion 1} can be improved in the sense that for all $0<\ep\le1/2$ the function 
    \begin{equation}\label{the blow up norm with distortion 2}
        \mathcal{C}\cap\tp{\R\times\mathcal{U}}\ni\tp{\upkappa,\psi}\mapsto|\upkappa| + \tnorm{\psi}_{C^{\ep}\tp{\T}} + \mathscr{D}\tp{\psi}\in\tp{0,\infty}
    \end{equation}
    is unbounded.
\end{customthm}

Theorem~\ref{1_MAIN_THM} tells us that the zero solution $\tp{\kappa,\psi} = 0$ to equation~\eqref{reformulation for the coordinate function psi} is part of a connected set of solutions to the same equation in which the blow-up quantity~\eqref{the blow up norm with distortion 1} (or~\eqref{the blow up norm with distortion 2} when $\mathsf{f} = 0$) grows arbitrarily large. This means that one of three things has to happen along this solution set. It could be the case that the forcing amplitude parameter $\kappa$ becomes arbitrarily large and yet solutions continue to exist. Otherwise, it is some measurement of the geometry of the domain $\Omega_\psi = R_\psi\tp{\T\times\tp{0,h}}\subset\T\times\R$ that is becoming unboundedly large along this solution set: either some norm measuring the regularity of the free boundary's conformal parametrization~\eqref{parametrization of the top boundary component of the image} (meaning $\tnorm{\psi}_X\to\infty$ for $X=H^{1/2 +\ep}\tp{\T}$ or $X = C^\ep\tp{\T}$) or else the distortion (meaning the conformal parameterization is degenerating due to $\mathscr{D}\tp{\psi}\to\infty$). Finally, we note that the first ellipticity condition in \eqref{conditions for the hidden ellipticity} is satisfied if $\m{g}>0$ and $\upphi(x)=\upphi(x_1)$, which is precisely the setup considered in \cite{NGCD} and \cite{MR4797733}. 

Our second main result simply checks that the reduction computations of Section~\ref{subsection on conformal and nonlocal reformulation} are valid along the solution set produced by Theorem~\ref{1_MAIN_THM}. Thus the objects produced by our first main theorem genuinely correspond to large solutions to the traveling wave equations of free boundary incompressible Darcy flow.

\begin{customthm}{2}[Transfer of solutions to the original free boundary problem]\label{2_MAIN_THM}
    Assume the hypotheses of Theorem~\ref{1_MAIN_THM} and let $\mathcal{C}$ be the connected solution set produced therein. Then, for all $\tp{\upkappa,\psi}\in\mathcal{C}\cap\tp{\R\times\mathcal{U}}$ the following hold.
    \begin{enumerate}
        \item The flattening map $R_\psi:\T\times[0,h]\to\T\times[0,\infty)$ is a class $C^{1,\al}$ diffeomorphism onto its image that is smooth in the interior; let us denote $\Omega_\psi = R_\psi\tp{\T\times\tp{0,h}}$.
        \item There exists $\m{v}\in C^\al\tp{\Omega_\psi;\R^2}$ and $\m{p}\in C^{1,\al}\tp{\Omega_\psi}$, class $C^4$ in the interior $\Omega_\psi$, solving in the classical sense the nondimensional equations of traveling free boundary incompressible Darcy flow:
        \begin{equation}\label{traveling_FBIDF}
            \begin{cases}
                \grad\cdot\m{v} = 0\text{ and }\m{v} + \grad\m{p} = -\m{g}e_2 + \upkappa^2\mathsf{f}&\text{ in }\Omega_\psi,\\
                e_2\cdot\m{v} = 0&\text{on }\T\times\tcb{0}\\
                \tp{\m{v} - \m{c}e_1}\cdot\nu = 0\text{ and }\m{p} = \upkappa^2\upphi&\text{on }R_\psi\tp{\T\times\tcb{h}},
            \end{cases}
        \end{equation}
        with $\nu:R_\psi\tp{\T\times\tcb{h}}\to\R^2$ denoting the outward pointing unit normal to the top boundary component of $\Omega_\psi$.
    \end{enumerate}
\end{customthm}

We say that the domains $\Omega_\psi$ discussed in Theorem~\ref{2_MAIN_THM} are graphical if
\begin{equation}
    \exists\;\eta\in C^0\tp{\T}\text{ such that }\Omega_\psi = \tcb{\tp{w,z}\in\T\times[0,\infty)\;:\;0<z<\eta(w)}.
\end{equation}
This condition may fail for sufficiently large traveling waves. Indeed, $\Omega_\psi$ is not graphical for $\psi\in\mathcal{U}$ only if there exists a point $w\in\T$ such that we have $e_1\cdot\pd_1R_\psi\tp{w,h}\le 0$. Since $e_1\cdot\pd_1R_0\tp{w,h} = 1$, the portion of Theorem~\ref{1_MAIN_THM}'s solution set $\mathcal{C}$ within a neighborhood of $\psi = 0$ are necesarily graphical, but this may not remain true for the larger waves. While in this paper we do not explicitly prove the existence or lack thereof of traveling wave solutions to~\eqref{traveling_FBIDF} that are not graphical, our conformal formulation and construction opens the door to future study on the possible development of overturning traveling waves along such solution sets.

Let us now enter a brief overarching discussion on the proofs of our main results. The strategy is to employ a global implicit function theorem (GIFT) based on the Leray-Schauder degree to the operator determined by the left hand side of the equation~\eqref{reformulation for the coordinate function psi}. Similar techniques have been used in recent years for the construction of large solutions to other physical nonlinear PDEs, e.g.~\cite{MR3949722,MR4135095,MR4557691,NGCD}. However, the particular GIFTs implemented in these results, that are a special case of Theorem II.6.1 in Kielh\"{o}fer~\cite{MR2859263}, require the operators to manifest as nonlinear compact perturbations of the identity. Not all versions of GIFTs in the literature possess the aforementioned structural mandate; in particular, the class of analytic global implicit function and bifurcation theorems (see, e.g. \cite{MR1956130,MR3765551,CWW} and references therein) are more flexible in the forms of the nonlinear operators that they can handle; but, as the name suggests, real analyticity of the operator is required. We emphasize that analytic GIFT methods were employed in the traveling surface wave problem for the viscous shallow water equations in~\cite{SWB}.

However, the particular nonlinear equation~\eqref{reformulation for the coordinate function psi} is simply not within the scope of analytic GIFTs for the reason that the nonlinear operator fails to be analytic under natural smoothness assumptions on the data profiles $\mathsf{f}$ and $\upphi$. The reason is that the numerous composition-type nonlinearities control the maximal order of Fr\'echet differentiability of the nonlinear operator in terms of the data profile regularity. We would have to then select real analytic data to get a real analytic operator, which is, in the authors' opinion, an unnecessarily restrictive choice. Thus, wanting to use data of a reasonable finite amount of regularity, we are led back to the convenient choice of using a real-variable GIFT based on the Leray-Schauder degree. Working towards massaging the equations in the form of a compact perturbation of the identity, we first identify a `quasilinearization' of the operator of~\eqref{reformulation for the coordinate function psi}. Letting $\kappa = \upkappa^2$ and writing this equation abstractly as $\bf{P}\tp{\upkappa,\psi} = 0$, we then are able to decompose the operator $\bf{P}$ as
\begin{equation}\label{quasilinearization of the equation}
    \bf{P}\tp{\upkappa,\psi} = \bf{A}\tp{\upkappa,\psi}\psi + \bf{Q}\tp{\upkappa,\psi}
\end{equation}
with $\bf{A}$, defined in~\eqref{the_main_part_A}, valued in the space of linear operators $\z{C}^{1,\al}\tp{\T}\to\z{C}^\al\tp{\T}$ and $\bf{Q}$, defined in~\eqref{the remainder operator Q}, a nonlinear and compact map $\z{C}^{1,\al}\tp{\T}\to\z{C}^\al\tp{\T}$. 

As it turns out, the operator $\bf{A}\tp{\upkappa,\psi}$ appearing in~\eqref{quasilinearization of the equation} is invertible with suitable estimates if we have satisfied the hidden ellipticity conditions~\eqref{conditions for the hidden ellipticity}. In these cases, we can apply the inverse operator $\tsb{\bf{A}\tp{\upkappa,\psi}}^{-1}$ to the equation $\bf{P}\tp{\upkappa,\psi} = 0$ to derive the equivalent functional form
\begin{equation}
\bf{F}\tp{\upkappa,\psi} = \psi + \tsb{\bf{A}\tp{\upkappa,\psi}}^{-1}\bf{Q}\tp{\upkappa,\psi}=0.
\end{equation}
We are then able to prove that the difference between $\bf{F}$ and the identity is indeed a compact nonlinear operator. We also calculate that the Fr\'echet derivative $D_2\bf{F}\tp{0,0}:\z{C}^{1,\al}\tp{\T}\to\z{C}^{1,\al}\tp{\T}$ is an isomorphism. Therefore, we can apply the global implicit function theorem recorded in Theorem 4.6 of~\cite{NGCD}. Upon doing so we are granted a connected solution set
\begin{equation}
\mathcal{C}\subset\tcb{\bf{F} = 0}\subset\R\times\z{C}^{1,\al}\tp{\T}
\end{equation}
such that $\mathcal{C}$ is either unbounded or $\mathcal{C}\setminus\tcb{\tp{0,0}}$ is connected. This latter alternative is what is known as the closed loop. We show that the only solution to $\bf{F}\tp{0,\psi} = 0$ is $\psi = 0$ and deduce that the closed loop alternative does not occur. So indeed the former alternative holds and the function
\begin{equation}\label{initial blow up quantity right here}
    \mathcal{C}\ni\tp{\upkappa,\psi}\mapsto|\upkappa| + \tnorm{\psi}_{C^{1,\al}\tp{\T}}\in\tp{0,\infty}
\end{equation}
is unbounded. Let us pause to make a comparison with the construction of large traveling waves with surface tension in \cite{NGCD}, that used the same GIFT. In the presence of surface tension, by inverting the capillary-gravity operator $H+\m{g}I$, $H$ being the mean curvature operator, the needed compactness is immediate due to the two derivative gain from the inverse $(H+\m{g}I)^{-1}$. In the present work, as hinted above, the compactness is delicate and comes from the hidden ellipticity, which leads to the invertibility of $\bf{A}$, and the commutator structure of $\bf{Q}$.

The next step is to show that the initial blow-up quantity~\eqref{initial blow up quantity right here} can be refined by replacing the high norm $\tnorm{\cdot}_{C^{1,\al}\tp{\T}}$ appearing above with a weaker norm $\tnorm{\cdot}_X$ such that $C^{1,\al}\tp{\T}\emb X$. The goal is to take $X$ as large as possible, as to learn as much as we can about the nature of the blow-up. We achieve this through careful paradifferential commutator estimates and are able to take $X = H^{1/2 + \ep}\tp{\T}$ for any $0<\ep\le 1/2$. Under the additional structural assumption that the bulk force $\mathsf{f}$ vanishes identically, we can sharpen even further via similar methods and take $X = C^\ep\tp{\T}$ for any $0<\ep\le1/2$.

Finally, we are only interested in the members of $\mathcal{C}$ that correspond to meaningful solutions to the original free boundary problem, and so we need to study $\mathcal{C}\cap\tp{\R\times\mathcal{U}}$, where $\mathcal{U}$ is the set of admissible domain coordinate functions~\eqref{the set of admissible domain coordinate functions}. In other words, we need to understand what is happening with the Riemann mappings $R_\psi$ at the possible scenario when large $\psi$ in $\mathcal{C}$ are leaving the set $\mathcal{U}$; when this is happening, the Riemann mappings are failing extend to the boundary. Using tools from classical complex analysis, we are able to deduce that $R_\psi$ can \emph{only} fail to be a diffeomorphism up to the boundary if the distortion $\mathscr{D}\tp{\psi}$ (defined in~\eqref{the distortion}) is becoming unboundedly large. Unpacking the meaning of $\mathscr{D}\tp{\psi}\to\infty$, shows that either the top free boundary of $\Omega_\psi$ becomes arbitrarily close to self-intersection or the conformal parametrizations of the free surfaces~\eqref{parametrization of the top boundary component of the image} degenerate via a vanishing derivative.

The remainder of the paper, following a brief digression on notation in Section~\ref{subsection on notational conventions}, is organized into three body sections. In Section~\ref{subsection on preliminaries} we record a flurry of technical estimates and mapping properties of the constituent pieces of the operator on the left hand side of equation~\eqref{reformulation for the coordinate function psi}. Having developed an understating of the nuances of the objects involved, we then proceed with an unencumbered construction and analysis of large solutions to the operator equation in Section~\ref{section on analysis of the nonlinear operator}. Finally, in Section~\ref{section on refinements}, we explore conformal degeneracy and complete the proofs of Theorems~\ref{1_MAIN_THM} and~\ref{2_MAIN_THM}.

\subsection{Conventions of notation}\label{subsection on notational conventions}

The natural numbers are $\N=\tcb{0,1,2,3,\dots}$ and we let $\N^+ = \N\setminus\tcb{0}$. The positive real numbers are $\R^+ = \tp{0,\infty}$. For sets $E_0\subset E_1$ we let $\mathds{1}_{E_0}:E_1\to\tcb{0,1}$ denote the characteristic function of $E_0$. If $X(\T)$ is a space of functions on the $1$-torus $\T$, we let $\z{X}\tp{\T}$ denote the subspace of function with vanishing integral over $\T$. We let $C^{k,\al}$, for $k\in\N$ and $0<\al<1$, denote the usual Banach space of $k$-times differentiable functions with $\al$-H\"{o}lder continuous derivatives. When $k = 0$, we shall simply abbreviate $C^{0,\al} = C^\al$.

We shall also use the Besov spaces of periodic functions. The most convenient way to describe these spaces is through Littlewood-Paley decompositions. Upon letting $\tcb{\Updelta_j}_{j\in\N}$ denote a standard sequence of inhomogeneous Littlewood-Paley dyadic frequency localization operators, e.g. as in equation~\eqref{LP localization operators} below, the norm on the Besov space $B^{s,p_1}_{p_2}\tp{\T}$ is
\begin{equation}\label{I_LOVE_BESOV_SPACES}
    \tnorm{f}_{B^{s,p_1}_{p_2}\tp{\T}} = \tnorm{\tcb{2^{js}\tnorm{\Updelta_j f}_{L^{p_1}\tp{\T}}}_{j\in\N}}_{\ell^{p_2}\tp{\Z}}\text{ for }s\in\R,\;p_1,p_2\in[1,\infty].
\end{equation}
When $p_1 = p_2 = p<\infty$ and $s\in\R\setminus\N$ we write instead $B^{s,p_1}_{p_2}\tp{\T} = W^{s,p}\tp{\T}$. For more information on Besov spaces, we refer the reader to~\cite{MR730762,MR2768550,MR3726909,MR4567945} and references therein. We mention that when $s = k+\al$ for $k\in\N$ and $0<\al<1$ we have the equality $C^{k,\al}\tp{\T} = B^{s,\infty}_\infty\tp{\T}$ with equivalence of norms.

\section{Preliminaries}\label{subsection on preliminaries}

Our construction of large traveling wave solutions to the equations of free boundary incompressible Darcy flow requires a prelude of estimates and property developments within a suitable functional framework. It is throughout this section that these technical preliminaries are recorded.

\subsection{Technical estimates and mapping properties}\label{subsection on technical estimates and mapping properties}

In this subsection we consider the atomic constituents of the operator appearing on the left hand side of equation~\eqref{reformulation for the coordinate function psi} and develop their necessary estimates and mapping properties. We begin by enumerating the mapping properties of the Cauchy-Riemann solver $\mathcal{E}$ that was introduced in equation~\eqref{definition of the Cauchy Riemann solver}.
\begin{lem}[Boundedness of the Cauchy-Riemann solver]\label{lemma on bounds on the Cauchy Riemann solver}
    For any $\R\ni s\ge1/2$, $\upiota\in\tcb{0,1}$, and $0<\al<1$ the linear maps
    \begin{equation}\label{CR mapping estimates}
        \z{H}^s\tp{\T}\ni\psi\mapsto\mathcal{E}\psi\in H^{1/2+s}\tp{\T\times\tp{0,h};\R^2}\text{ and }\z{C}^{\upiota,\al}\tp{\T}\ni\psi\mapsto\mathcal{E}\psi\in C^{\upiota,\al}\tp{\T\times\tp{0,h};\R^2}
    \end{equation}
    are bounded. Moreover, for all $\psi$ belonging to one of the domains of the above maps we have that $\mathcal{E}\psi$ is smooth in the interior and solves the Cauchy-Riemann equations
    \begin{equation}\label{CR bulk}
        e_1\cdot\pd_1\mathcal{E}\psi = e_2\cdot\pd_2\mathcal{E}\psi\text{ and }-e_1\cdot\pd_2\mathcal{E}\psi = e_2\cdot\pd_1\mathcal{E}\psi\text{ in }\T\times\tp{0,h}
    \end{equation}
    along with the boundary conditions
    \begin{equation}\label{CR boundary}
        e_2\cdot\m{Tr}_{\T\times\tcb{h}}\mathcal{E}\psi = \psi,\;e_2\cdot\m{Tr}_{\T\times\tcb{0}}\mathcal{E}\psi = 0,\;\int_{\T}e_1\cdot\m{Tr}_{\T\times\tcb{h}}\mathcal{E}\psi = 0.
    \end{equation}
\end{lem}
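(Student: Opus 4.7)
The plan is to verify the three assertions by unpacking the explicit Fourier representation~\eqref{definition of the Cauchy Riemann solver}. The starting observation is that for each fixed $z\in(0,h)$ and each $\xi \in \Z \setminus \{0\}$ (so $|\xi|\geq 1$), the multipliers $\cosh(2\pi|\xi|z)/\sinh(2\pi|\xi|h)$ and $\sinh(2\pi|\xi|z)/\sinh(2\pi|\xi|h)$ are comparable to $e^{-2\pi|\xi|(h-z)}$, with analogous estimates for their $z$-derivatives; hence the Fourier symbol decays exponentially in $|\xi|$ on any compact subinterval of $(0,h)$, and so $\mathcal{E}\psi \in C^\infty(\T\times(0,h);\R^2)$ for any distributional mean-zero $\psi$. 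The bulk identities~\eqref{CR bulk} follow from a direct Fourier-level computation using that $w$-differentiation is multiplication by $2\pi i\xi$, which interchanges the two components in precisely the required way. The boundary conditions~\eqref{CR boundary} are read off by evaluating the symbol at $z = 0$ (where the $e_2$-component vanishes because $\sinh 0 = 0$) and at $z = h$ (where the $e_2$-component collapses to $\hat\psi(\xi)$); the mean-zero condition on the top trace of the $e_1$-component is enforced by the factor $\mathds{1}_{\Z\setminus\{0\}}(\xi)$.

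For the Sobolev bound I would apply Plancherel and note that direct integration yields, for $|\xi| \geq 1$,
$$
\int_0^h \frac{\cosh^2(2\pi|\xi|z)}{\sinh^2(2\pi|\xi|h)}\,dz + \int_0^h \frac{\sinh^2(2\pi|\xi|z)}{\sinh^2(2\pi|\xi|h)}\,dz \lesssim |\xi|^{-1},
$$
with each $z$-derivative costing a factor of $|\xi|$ in the integrand and hence $|\xi|^2$ in the $L^2(0,h)$-norm. Summing over $\xi$ with the appropriate weights gives $\|\mathcal{E}\psi\|_{H^{1/2+s}(\T\times(0,h))} \lesssim \|\psi\|_{H^s(\T)}$ directly when $1/2+s$ is an integer; for fractional values the estimate follows by interpolation, or equivalently by recognizing that the $e_2$-component of $\mathcal{E}\psi$ is the harmonic extension of $\psi$ to the slab with homogeneous Neumann data on $\T\times\{0\}$ (and the $e_1$-component is its harmonic conjugate, normalized to have vanishing mean on the top), which is the standard trace-theoretic setting.

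The H\"older bound is the most delicate of the three and is where I anticipate the main obstacle. The cleanest route is classical Schauder theory for the mixed Dirichlet-Neumann Laplace problem on the smooth slab $\T\times(0,h)$: since the two boundary components are smooth, disjoint, and meet at no corners, the standard global Schauder estimates give $\|u\|_{C^{\iota,\alpha}(\T\times[0,h])} \lesssim \|g\|_{C^{\iota,\alpha}(\T)}$ whenever $u$ is harmonic on the slab with $u|_{\T\times\{h\}} = g$ and vanishing Neumann data on the bottom. Applying this to $(\mathcal{E}\psi)_2$ with data $\psi$, and to $(\mathcal{E}\psi)_1$ with the Neumann-Dirichlet configuration dictated by the Cauchy-Riemann equations~\eqref{CR bulk} and the mean-zero constraint, yields the second mapping estimate in~\eqref{CR mapping estimates}. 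An alternative, more direct but more laborious, route would be to work with the explicit Poisson-type convolution kernel of the strip and verify the H\"older estimate by hand from kernel bounds.
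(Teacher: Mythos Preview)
Your treatment of interior smoothness, the Cauchy--Riemann identities, the boundary conditions, and the Sobolev bound is correct and matches the paper's proof essentially verbatim (Plancherel at integer orders followed by interpolation).

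For the H\"older bound your outline is in the right spirit but contains a slip and a genuine gap. First, the slip: the second component $(\mathcal{E}\psi)_2$ has \emph{Dirichlet} zero data on the bottom (since $\sinh 0 = 0$), not Neumann zero; its normal derivative at $z=0$ has symbol $2\pi|\xi|/\sinh(2\pi|\xi|h)$, which is nonzero. This is harmless because Schauder theory for the Dirichlet--Dirichlet problem is just as standard.

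The more substantive issue is your handling of $(\mathcal{E}\psi)_1$. You write that one applies Schauder ``with the Neumann--Dirichlet configuration dictated by the Cauchy--Riemann equations,'' but neither natural choice of top data closes cleanly without an additional ingredient. If you impose Neumann data on top, the Cauchy--Riemann relations give $\partial_2(\mathcal{E}\psi)_1|_{z=h} = -\psi'$, which for $\iota = 0$ lies only in $C^{\alpha-1}$ and so classical Schauder does not directly apply. If you impose Dirichlet data on top, that data is $H_h\psi$ where $H_h$ has symbol $-i\,\mathrm{sgn}(\xi)\coth(2\pi|\xi|h)$, i.e.\ the periodic Hilbert transform plus a smoothing correction; you then need the boundedness of this zero-order Fourier multiplier on $C^{\iota,\alpha}(\T)$, which is a nontrivial (though standard) fact you do not mention.

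This is exactly what the paper makes explicit. It factors $\mathcal{E}\psi = e_1\,T(H_0\psi, H_h\psi) + e_2\,T(0,\psi)$, where $T$ is the double-Dirichlet harmonic extension to the slab and $H_z$ are the Fourier multipliers with symbol $-i\,\mathrm{sgn}(\xi)\cosh(2\pi|\xi|z)/\sinh(2\pi|\xi|h)$. Schauder handles $T$, and the $C^{\iota,\alpha}$-boundedness of $H_0$ and $H_h$ is quoted from H\"ormander--Mikhlin type multiplier results on H\"older spaces. So your approach and the paper's are ultimately the same; the paper just isolates the multiplier step you left implicit.
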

\begin{proof}
    The identities~\eqref{CR bulk} and~\eqref{CR boundary} are simple and direct calculations.
    
    The left hand mapping property claimed in~\eqref{CR mapping estimates} is true for $s\in1/2 + \N$ via a direct calculation based on Plancherel's theorem. The intermediate cases then follow by interpolation.

    We turn our attention to proving the right hand mapping properties asserted in equation~\eqref{CR mapping estimates}. We shall factor the operator $\mathcal{E}$ into simpler components. To wit, let us define, for $z\in[0,h]$, the auxiliary mappings
    \begin{equation}\label{bounds of the more aux ops}
        T:C^{\upiota,\al}\tp{\T}\times C^{\upiota,\al}\tp{\T}\to C^{\upiota,\al}\tp{\T\times\tp{0,h}}\text{ and }H_z:C^{\upiota,\al}\tp{\T}\to C^{\upiota,\al}\tp{\T}
    \end{equation}
    with actions defined for $g^1,g^2,g^3\in C^{\upiota,\al}\tp{\T}$ via
    \begin{equation}\label{more aux ops}
        T(g^1,g^2) = u\text{ and }\mathscr{F}[H_zg^3]\tp{\xi} = -\ii\mathds{1}_{\Z\setminus\tcb{0}}\tp{\xi}\f{\xi\cosh\tp{2\pi|\xi|z}}{\tabs{\xi}\sinh\tp{2\pi|\xi|h}}\mathscr{F}[g^3]\tp{\xi}
    \end{equation}
    where $u$ is the unique solution to the boundary value problem
    \begin{equation}
        \Delta u = 0\text{ in }\T\times\tp{0,h},\;u = g^1\text{ on }\T\times\tcb{0},\text{ and }u = g^2\text{ on }\T\times\tcb{h}.
    \end{equation}
    The boundedness of $H_z$ between the spaces stated in~\eqref{bounds of the more aux ops} is clear thanks to well-known mapping properties of Fourier multipliers acting between H\"{o}lder spaces; see, for instance, the analysis of Section 5.3 in Stein~\cite{MR1232192} or Appendix A in~\cite{NGCD}. On the other hand, that $T$ is well-defined and bounded as a linear mapping acting between the stated spaces of~\eqref{bounds of the more aux ops} follows from classical Schauder theory (see, e.g, Gilbarg and Trudinger~\cite{MR1814364} or Simon~\cite{MR1459795}).

    In terms of the mappings $T$ and $H_z$ of~\eqref{more aux ops}, the Cauchy-Riemann solver $\mathcal{E}$ decomposes via
    \begin{equation}\label{decomposition of the Cauchy Riemann operator}
        \mathcal{E}\psi = e_1T(H_0\psi,H_h\psi) + e_2T(0,\psi)
    \end{equation}
    for appropriate $\psi$. The boundedness stated in the right hand side of~\eqref{CR mapping estimates} follows from~\eqref{bounds of the more aux ops} and~\eqref{decomposition of the Cauchy Riemann operator}.
\end{proof}

Our next task is to understand the Nemytskii-type operators $\psi\mapsto\mathsf{f}\circ R_\psi$ and $\psi\mapsto\upphi\circ R_\psi$ appearing in equation~\eqref{reformulation for the coordinate function psi}, where $R_\psi$ is defined in~\eqref{The Coordinate Maps}; we shall prove these maps to be continuously differentiable between suitable Banach spaces of functions if $\mathsf{f}$ and $\mathsf{\upphi}$ are taken fixed and sufficiently regular.

\begin{lem}[Mapping properties of the composition operators]\label{lem on mapping properties of the composition operators}
    Let $\mathsf{f}\in C^4\tp{\T\times\R;\R^2}$, $\upphi\in C^4\tp{\T\times\R}$, $\upiota\in\tcb{0,1}$, $0<\al,\ep<1$. The following mappings are well-defined, continuously differentiable in the Fr\'echet sense, and map bounded sets to bounded sets:
    \begin{multline}\label{map_1}
        \z{C}^{\upiota,\al}\tp{\T}\ni\psi\mapsto\\
        \tp{\tp{\grad\cdot\mathsf{f}}\circ R_\psi,\m{Tr}_{\T\times\tcb{0}}\mathsf{f}\circ R_\psi,\m{Tr}_{\T\times\tcb{h}}\mathsf{f}\circ R_\psi}\in C^{\upiota,\al}\tp{\T\times\tp{0,h}}\times C^{3}\tp{\T;\R^2}\times C^{\upiota,\al}\tp{\T;\R^2},
    \end{multline}
    \begin{equation}\label{map_2}
        \z{C}^{\upiota,\al}\tp{\T}\ni\psi\mapsto\m{Tr}_{\T\times\tcb{h}}\upphi\circ R_{\psi}\in C^{\upiota,\al}\tp{\T},
    \end{equation}
    \begin{multline}\label{map_3}
        \z{H}^{1/2 + \ep}\tp{\T}\ni\psi\mapsto
        \tp{\tp{\grad\cdot \mathsf{f}}\circ R_\psi,\m{Tr}_{\T\times\tcb{0}}\mathsf{f}\circ R_\psi,\m{Tr}_{\T\times\tcb{h}}\mathsf{f}\circ R_\psi}\\\in C^{\ep}\tp{\T\times\tp{0,h}}\times C^{3}\tp{\T;\R^2}\times H^{1/2 + \ep}\tp{\T;\R^2},
    \end{multline}
    and
    \begin{equation}\label{map_4}
        \z{H}^{1/2 + \ep}\tp{\T}\ni \psi\mapsto\m{Tr}_{\T\times\tcb{h}}\upphi\circ R_{\psi}\in H^{1/2 + \ep}\tp{\T}.
    \end{equation}
\end{lem}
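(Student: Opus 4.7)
The plan is to decompose each of the four maps as a composition $\psi \mapsto R_\psi \mapsto \mathsf{f} \circ R_\psi$ (or $\upphi \circ R_\psi$), where the first piece $\psi \mapsto R_\psi = \m{id} + \mathcal{E}\psi$ is affine and therefore $C^\infty$, with bounds supplied by Lemma~\ref{lemma on bounds on the Cauchy Riemann solver}, while the second piece is a Nemytskii-type substitution operator. The strategy then is first to verify boundedness of these Nemytskii operators in each target space, and second to establish Fréchet differentiability via Taylor's theorem applied to $\mathsf{f}$ or $\upphi$. The hypothesis $\mathsf{f}, \upphi \in C^4$ leaves ample margin: $C^2$ regularity suffices to get boundedness in Hölder and low-regularity Sobolev spaces, and $C^3$ regularity suffices to get $C^1$ Fréchet differentiability with candidate derivative $\chi \mapsto (\grad \mathsf{f}) \circ R_\psi \cdot \mathcal{E}\chi$, continuous in $\psi$ by composition.

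For the Hölder maps~\eqref{map_1} and~\eqref{map_2}, the bulk composition $(\grad \cdot \mathsf{f}) \circ R_\psi \in C^{\upiota,\al}(\T \times (0,h))$ and the top trace $\m{Tr}_{\T\times\tcb{h}} \mathsf{f} \circ R_\psi \in C^{\upiota,\al}(\T)$ follow from the classical Hölder composition inequalities, using that $R_\psi$ and its top boundary trace lie in $C^{\upiota,\al}$ by Lemma~\ref{lemma on bounds on the Cauchy Riemann solver}. The $C^3$ regularity gain for the bottom trace is explained by the special form of $\mathcal{E}$ at $z = 0$: the defining symbol of $H_0$ in~\eqref{more aux ops}, namely $-\ii \m{sgn}(\xi)/\sinh(2\pi h|\xi|)$, decays exponentially in $|\xi|$, while the second component trace vanishes on $\T\times\tcb{0}$ by~\eqref{CR boundary}. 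Thus $\m{Tr}_{\T\times\tcb{0}} \mathcal{E}\psi$ is $C^\infty$ with bounded operator estimates into every $C^k$-norm, and composing with $\mathsf{f}(\cdot, 0) \in C^4$ yields an element of $C^3$ (indeed $C^4$) with continuous Fréchet dependence supplied by the same Taylor expansion argument.

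The Sobolev maps~\eqref{map_3} and~\eqref{map_4} proceed analogously, using $\mathcal{E}: \z{H}^{1/2+\ep}(\T) \to H^{1+\ep}(\T \times (0,h); \R^2)$ together with the two-dimensional Sobolev embedding $H^{1+\ep}(\T\times(0,h)) \emb C^\ep$ (handling the bulk $C^\ep$ target) and the classical trace theorem giving $\m{Tr}_{\T\times\tcb{h}} \mathcal{E}\psi \in H^{1/2+\ep}(\T; \R^2)$. The main obstacle, and the principal technical step, is establishing that the top-trace composition $\upphi \circ R_\psi|_{z=h}$ lies in $H^{1/2+\ep}(\T)$ with continuous Fréchet dependence in that norm. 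For this I would invoke the classical Moser/paraproduct composition inequality, valid because $s = 1/2+\ep > 1/2$ ensures that $H^s(\T)$ is a Banach algebra continuously embedded in $L^\infty$; composition with the sufficiently smooth $\upphi$ is thereby bounded on $H^s$, and a standard paradifferential remainder analysis upgrades boundedness to $C^1$ Fréchet dependence with derivative $\chi \mapsto (\grad \upphi) \circ R_\psi \cdot \mathcal{E}\chi$. The bottom trace argument and the bulk $C^\ep$ argument for $\mathsf{f}$ are unchanged from the Hölder case thanks, respectively, to the infinite-order smoothing of $H_0$ at $z=0$ and to the Sobolev embedding.
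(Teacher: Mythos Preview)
Your proposal is correct and follows essentially the same strategy as the paper: factor each map as the affine step $\psi\mapsto R_\psi$ (controlled by Lemma~\ref{lemma on bounds on the Cauchy Riemann solver}) followed by a Nemytskii substitution, exploit the exponential decay of the symbol of $H_0$ for the $C^3$ bottom-trace gain, and use the Sobolev embedding $H^{1+\ep}\emb C^\ep$ in two dimensions together with the algebra property of $H^{1/2+\ep}(\T)$ for the Sobolev top trace. The only notable difference is presentational: the paper inserts an explicit extension--cutoff--restriction maneuver (via Stein's extension operator and a compactly supported cutoff) so that the Nemytskii step can be quoted directly from the whole-space literature (Lanza de Cristoforis for $C^{\upiota,\al}$, Taylor and Bourdaud--Lanza de Cristoforis for $H^{1/2+\ep}$, the Omega Lemma of Abraham--Marsden--Ratiu for the smooth bottom trace), whereas you work directly on the periodic strip and appeal to the same estimates in that setting.
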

\begin{proof}
    The compositional results that we seek are essentially well-known in the literature, but are difficult to find stated in the precise form we require.

    We begin by defining an auxiliary affine linear operator. Let $\mathfrak{E}$ denote the extension operator constructed by Theorem 5 of Chapter 6 in Stein~\cite{MR290095}. We have that
    \begin{equation}
        \mathfrak{E}:C^{\upiota,\al}\tp{\R\times(0,h)}\to C^{\upiota,\al}\tp{\R^2}
    \end{equation}
    maps boundedly. Let also $\chi\in C^\infty\tp{\R}$ satisfy $\chi(w) = 1$ for $|w|\le 1$ and $\chi(w) = 0$ for $|w|\ge 2$, $w\in\R$. Now define
    \begin{equation}\label{yet_another_helper}
        \mathcal{A}:\z{C}^{\upiota,\al}\tp{\T}\to C^{\upiota,\al}\tp{\R^2;\R^2}\text{ via }\mathcal{A}\tp{\psi} = \mathfrak{E}\tp{\chi R_\psi}\text{ for all }\psi\in \z{C}^{\upiota,\al}\tp{\T}
    \end{equation}
    with the understanding that $R_\psi\in C^{\upiota,\al}\tp{\R\times\tp{0,h};\R^2}$ via the natural periodic lifting and $\tp{\chi R_\psi}\tp{w,z} = \chi(w)R_\psi(w,z)$ for $(w,z)\in\R\times\tp{0,h}$.

    By using the map $\mathcal{A}$, a restriction operator, and Lemma~\ref{lemma on bounds on the Cauchy Riemann solver}, we deduce the following reduction for any fixed $\mathsf{J}\in C^3\tp{\T\times\R}$: The operator 
    \begin{equation}\label{thing_we_want}
        \z{C}^{\upiota,\al}\tp{\T}\ni\psi\mapsto\mathsf{J}\circ R_\psi\in C^{\upiota,\al}\tp{\T\times\tp{0,h}}
    \end{equation}
    is well-defined, maps bounded sets to bounded sets, and is $C^1$ in the Fr\'echet sense if the (closely related) operator
    \begin{equation}\label{thing_we_can_do}
        C^{\upiota,\al}\tp{\R^2}\ni\eta\mapsto\mathsf{J}\circ\eta\in C^{\upiota,\al}\tp{\R^2}.
    \end{equation}
    is well-defined, maps bounded sets to bounded sets, and is $C^1$ in the Fr\'echet sense. Indeed, assuming we know~\eqref{thing_we_can_do}, we take $\eta = \mathcal{A}\tp{\psi}$ and consider the restriction of the composition $\mathsf{J}\circ\eta$ to deduce~\eqref{thing_we_want}.

    The claimed mapping properties of the `lifted' composition operator~\eqref{thing_we_can_do} are well-known; see, for instance, Sections 3 and 4 in Lanza de Cristoforis~\cite{MR1307964}. See also Dr\'{a}bek~\cite{MR380547} and Bourdaud and Lanza de Cristoforis~\cite{MR1926867}.

    We take $\mathsf{J} = e_i\cdot\mathsf{f}$ ($i\in\tcb{1,2}$) or $\mathsf{J} = \grad\cdot\mathsf{f}$ in~\eqref{thing_we_want} to deduce that 
    \begin{equation}
        \z{C}^{\upiota,\al}\tp{\T}\ni\psi\mapsto\tp{\tp{\grad\cdot\mathsf{f}}\circ R_\psi,\m{Tr}_{\T\times\tcb{h}}\mathsf{f}\circ R_\psi}\in C^{\upiota,\al}\tp{\T\times\tp{0,h}}\times C^{\upiota,\al}\tp{\T;\R^2}
    \end{equation}
    is well-defined, maps bounded sets to bounded sets, and is class $C^1$. This is almost the first claim~\eqref{map_1}. To prove that 
    \begin{equation}\label{thing_we_want_2}
        \z{C}^{\upiota,\al}\tp{\T}\ni\psi\mapsto\m{Tr}_{\T\times\tcb{0}}\mathsf{f}\circ R_{\psi}\in C^3\tp{\T;\R^2}
    \end{equation}
    satisfies the same desired properties, we can argue similarly as above, but now we note that
    \begin{equation}\label{thing_we_observe}
        \tp{\m{Tr}_{\T\times\tcb{0}}\mathsf{f}\circ R_{\psi}}\tp{w} = \mathsf{f}\tp{w + \tp{H_0\psi}\tp{w},0}\text{ for }w\in\T
    \end{equation}
    with $H_0$ as defined in~\eqref{more aux ops}. The symbol of the Fourier multiplier associated with $H_0$ is decaying exponentially quickly in frequency and so $H_0\psi$ is smooth in a quantitative way. Thus the map of~\eqref{thing_we_want_2} is well-defined, class $C^1$, and maps bounded sets to bounded sets as a consequence of the so called `Omega Lemma': see, e.g. 2.4.18 in Abraham, Marsden, and Ratiu~\cite{MR960687}. The claims for the mappings of equation~\eqref{map_1} are now established.

    We now turn our attention to the verification of some of the mapping properties asserted in equation~\eqref{map_3}. By arguing exactly as in~\eqref{thing_we_want_2} and~\eqref{thing_we_observe} we conclude that the desired properties hold for the middle component of~\eqref{map_3}:
    \begin{equation}
        \z{H}^{1/2 + \ep}\tp{\T}\ni\psi\mapsto\m{Tr}_{\T\times\tcb{0}}\mathsf{f}\circ R_{\psi}\in C^3\tp{\T\times\R^2}.
    \end{equation}

    For the first component of~\eqref{map_3}, we use Lemma~\ref{CR mapping estimates} and the Sobolev embedding
    \begin{equation}
        H^{1 + \ep}\tp{\T\times\tp{0,h}}\emb C^\ep\tp{\T\times\tp{0,h}}
    \end{equation}
    to conclude that $\mathcal{A}$ of~\eqref{yet_another_helper} restricts to a bounded affine linear operator
    \begin{equation}\label{I_am_bored}
        \mathcal{A}:\z{H}^{1/2 + \ep}\tp{\T}\to C^\ep\tp{\R^2;\R^2}.
    \end{equation}
    Thus, we may combine~\eqref{I_am_bored} with the reduction described in equations~\eqref{thing_we_want} and~\eqref{thing_we_can_do} once more to conclude that 
    \begin{equation}
        \z{H}^{1/2 + \ep}\tp{\T}\ni\psi\mapsto\tp{\grad\cdot\mathsf{f}}\circ R_{\psi}\in C^\ep\tp{\T\times\tp{0,h}}
    \end{equation}
    is well-defined, Fr\'echet continuously differentiable, and maps bounded sets to bounded sets.
    
    It remains to analyze~\eqref{map_2}, \eqref{map_4}, and the final component of~\eqref{map_3}. That the desired properties for the map~\eqref{map_2} hold, is a consequence of a similar argument to the one above that relies on the results of~\cite{MR1307964}; thus, we omit the repetitive details. The sought after properties for~\eqref{map_4} and the final component of~\eqref{map_3}, after an extension, cut-off, and restriction argument, follow from the fact that the composition operator
    \begin{equation}
        H^{1/2 + \ep}\tp{\R}\ni\eta\mapsto\mathsf{J}\circ\eta\in H^{1/2 + \ep}\tp{\R}
    \end{equation}
    is well-defined, class $C^1$, and maps bounded sets to bounded sets. This latter fact is a consequence of the estimates of Section 4 of Chapter 2 in Taylor~\cite{MR1766415} and the converse to Taylor's theorem, see, e.g., Section 2.4B in Abraham, Marsden, and Ratiu~\cite{MR960687}; see also Bourdaud and Lanza de Cristoforis~\cite{MR2369144}.
\end{proof}

We continue our exploration of the preliminary material by next examining the mapping properties of the final term in~\eqref{reformulation for the coordinate function psi} involving the operator $S$ that we recall is defined in equation~\eqref{help_solve_2}. The following result, in particular, verifies that this term is lower order in the nonlinear pseudodifferential equation~\eqref{reformulation for the coordinate function psi}.

\begin{lem}[Analysis of the lower-order bulk remainder]\label{lem on analysis of the lower order bulk remainder}
    Let $\mathsf{f}\in C^4\tp{\T\times\R;\R^2}$, $0<\al<1$, and $0<\ep\le1/2$. The mapping
    \begin{equation}\label{defn_l_o_t}
        \psi\mapsto\bf{K}\tp{\psi} = S\tp{\tp{\tabs{\grad R_\psi}^2/2}\tp{\grad\cdot\mathsf{f}}\circ R_\psi,\m{Tr}_{\T\times\tcb{0}}\tp{\mathsf{f}\circ R_\psi}\cdot\tp{\pd_1R_\psi}^\perp}
    \end{equation}
    is well-defined, continuously differentiable in the Fr\'echet sense, and maps bounded sets to bounded sets for the following domain and codomain pairs:
    \begin{equation}\label{map_l_o_t}
        \bf{K}:\z{C}^{1,\al}\tp{\T}\to C^{1,\al}\tp{\T},\;\bf{K}:\z{H}^{1/2 + \ep}\tp{\T}\to W^{\ep,1/\tp{1 - \ep}}\tp{\T},\;\text{and }\bf{K}:\z{H}^{1 + \al/2}\tp{\T}\to C^\al\tp{\T}.
    \end{equation}
\end{lem}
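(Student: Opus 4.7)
The plan is to decompose $\bf{K}\tp{\psi} = S\tp{g^1\tp{\psi}, g^2\tp{\psi}}$, where $g^1\tp{\psi} = \tp{\tabs{\grad R_\psi}^2/2}\tp{\grad \cdot \mathsf{f}} \circ R_\psi$ is the bulk datum and $g^2\tp{\psi} = \m{Tr}_{\T\times\tcb{0}}\tp{\mathsf{f} \circ R_\psi} \cdot \tp{\pd_1 R_\psi}^\perp$ is the boundary datum. For each of the three claimed mapping properties I would first identify mapping properties of the linear solution operator $S$ in the appropriate function space scale, then bound $g^1\tp{\psi}$ and $g^2\tp{\psi}$ in those spaces using Lemmas~\ref{lemma on bounds on the Cauchy Riemann solver} and~\ref{lem on mapping properties of the composition operators}, and finally compose via the chain rule for Fr\'echet differentiability.

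The two regularity properties of $S$ that I would invoke are: Schauder theory for the mixed Dirichlet-Neumann problem on $\T\times\tp{0,h}$, giving the bounded linear map $S\colon C^\al\tp{\T\times\tp{0,h}} \times C^{1,\al}\tp{\T} \to C^{1,\al}\tp{\T}$; and Agmon-Douglis-Nirenberg $L^p$ elliptic theory, giving $S\colon L^p\tp{\T\times\tp{0,h}} \times W^{1-1/p, p}\tp{\T} \to W^{1-1/p, p}\tp{\T}$ for $p \in \tp{1,\infty}$. A structural observation, exploited in each case, is that $\pd_1 R_\psi$ restricted to $\T\times\tcb{0}$ depends on $\psi$ only through Fourier multipliers whose symbols involve $1/\sinh\tp{2\pi\tabs{\xi}h}$ and thus decay exponentially; hence $\tp{\pd_1 R_\psi}^\perp\res\tp{\T\times\tcb{0}}$ lies in $C^k\tp{\T;\R^2}$ for every $k\in\N$ with smooth dependence on $\psi$ in any reasonable topology. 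Combined with $\m{Tr}_{\T\times\tcb{0}}\tp{\mathsf{f}\circ R_\psi} \in C^3\tp{\T;\R^2}$ from Lemma~\ref{lem on mapping properties of the composition operators}, we conclude $g^2\tp{\psi} \in C^3\tp{\T;\R^2}$, which trivially satisfies the regularity needed for the second argument of $S$ in every case.

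The three cases then reduce to analyzing $g^1\tp{\psi}$ in the appropriate bulk space. In the H\"older scale, $\psi \in \z{C}^{1,\al}\tp{\T}$ yields $\grad R_\psi \in C^\al$ by Lemma~\ref{lemma on bounds on the Cauchy Riemann solver} and $\tp{\grad\cdot\mathsf{f}}\circ R_\psi \in C^{1,\al}$ by Lemma~\ref{lem on mapping properties of the composition operators}, so $g^1\tp{\psi} \in C^\al$ and Schauder delivers the output in $C^{1,\al}\tp{\T}$. In the $L^p$ case with $p=1/\tp{1-\ep}$, the Cauchy-Riemann bound together with the two-dimensional Sobolev embedding $H^\ep\hookrightarrow L^{2/\tp{1-\ep}}$ give $\grad R_\psi \in L^{2/\tp{1-\ep}}\tp{\T\times\tp{0,h}}$ and hence $\tabs{\grad R_\psi}^2 \in L^{1/\tp{1-\ep}}$; pairing with $\tp{\grad\cdot\mathsf{f}}\circ R_\psi \in C^\ep$ from~\eqref{map_3} yields $g^1 \in L^p$, and the $L^p$ theory produces the trace regularity $W^{1-1/p, p} = W^{\ep, 1/\tp{1-\ep}}$. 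For the third case $\psi \in \z{H}^{1+\al/2}\tp{\T}$, we have $\grad R_\psi \in H^{1/2+\al/2}\hookrightarrow L^{4/\tp{1-\al}}$, hence $\tabs{\grad R_\psi}^2 \in L^{2/\tp{1-\al}}$; applying the $L^{2/\tp{1-\al}}$ theory for $S$ together with the one-dimensional Sobolev embedding $W^{\tp{1+\al}/2, 2/\tp{1-\al}}\tp{\T} \hookrightarrow C^\al\tp{\T}$ for the resulting trace of $\pd_2 u$ completes the argument.

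Fr\'echet differentiability and preservation of bounded sets propagate through the chain rule once each constituent map is known to be $C^1$: the Cauchy-Riemann solver is linear, the composition operators are $C^1$ by Lemma~\ref{lem on mapping properties of the composition operators}, the bilinear pointwise-product maps into $L^p$ or H\"older spaces are smooth via H\"older's inequality or the standard H\"older product rule, and $S$ is linear. I anticipate that the main technical obstacle is verifying and quoting the sharp $L^p$ regularity theory for the mixed Dirichlet-Neumann problem on the anisotropic slab $\T\times\tp{0,h}$, and then precisely aligning the resulting fractional Sobolev-Slobodeckij trace spaces with the two-dimensional Sobolev embeddings used to control $\tabs{\grad R_\psi}^2$; each ingredient is classical, but the index bookkeeping across the three cases requires care.
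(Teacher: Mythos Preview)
Your proposal is correct and follows essentially the same approach as the paper: the same decomposition $\bf{K}(\psi)=S(g^1(\psi),g^2(\psi))$, the same Schauder and $L^p$ elliptic mapping properties of $S$, the same smoothing observation for the bottom trace $\m{Tr}_{\T\times\tcb{0}}\pd_1R_\psi$, and the same Sobolev embeddings (including $W^{(1+\al)/2,2/(1-\al)}(\T)\hookrightarrow C^\al(\T)$ in the third case). Your write-up is in fact slightly more explicit about the intermediate embedding $H^{1/2+\al/2}\hookrightarrow L^{4/(1-\al)}$ in the third case than the paper, which jumps directly to $g^1\in L^{2/(1-\al)}$.
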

\begin{proof}
    Let us begin with the proof of the left hand mapping property claimed in equation~\eqref{map_l_o_t}. We first note that by standard Schauder theory (see, e.g., \cite{MR1814364,MR1459795}), the linear map $S$ (defined in~\eqref{help_solve_2}), maps boundedly
    \begin{equation}\label{S_SCHAUDER}
        S:C^{\al}\tp{\T\times\tp{0,h}}\times C^{1,\al}\tp{\T}\to C^{1,\al}\tp{\T}.
    \end{equation}
    and, therefore, it is sufficient to observe that
    \begin{equation}\label{UGH_THIS_SECTION}
        \z{C}^{1,\al}\tp{\T}\ni\psi\mapsto\tp{\tp{\tabs{\grad R_\psi}^2/2}\tp{\grad\cdot\mathsf{f}}\circ R_
        \psi,\m{Tr}_{\T\times\tcb{0}}\tp{\mathsf{f}\circ R_\psi}\cdot\tp{\pd_1R_\psi}^\perp}\in C^{\al}\tp{\T\times\tp{0,h}}\times C^{1,\al}\tp{\T}
    \end{equation}
    is a well-defined mapping that is class $C^1$ and maps bounded sets to bounded sets. This latter fact, in turn, is a consequence of Lemmas~\ref{lemma on bounds on the Cauchy Riemann solver} and~\ref{lem on mapping properties of the composition operators}, simple algebra properties of the spaces involved, and the observation that 
    \begin{equation}
        \z{C}^{1,\al}\tp{\T}\ni\psi\mapsto\m{Tr}_{\T\times\tcb{0}}\tp{\pd_1R_\psi}^\perp\in C^{1,\al}\tp{\T;\R^2}
    \end{equation}
    is, in fact, an affine linear smoothing operator.

    We now focus on the claimed middle hand mapping property of equation~\eqref{map_l_o_t}. A key point is that the map $S$, now thought of as the function
    \begin{equation}
        S:L^{1/\tp{1-\ep}}\tp{\T\times\tp{0,h}}\times W^{\ep,1/\tp{1-\ep}}\tp{\T}\to W^{\ep,1/\tp{1 - \ep}}\tp{\T}
    \end{equation}
    remains well-defined and bounded, thanks to elliptic estimates in Lebesgue spaces and Sobolev space trace theory (see, e.g., \cite{MR1814364,MR3726909}). Since $\psi\mapsto\m{Tr}_{\T\times\tcb{0}}R_\psi$ is a smoothing operator, the second component of the map~\eqref{UGH_THIS_SECTION} again gives us no trouble in verifying that the function
    \begin{equation}
        \z{H}^{1/2 + \ep}\tp{\T}\ni\psi\mapsto\m{Tr}_{\T\times\tcb{0}}\tp{\mathsf{f}\circ R_\psi}\cdot\tp{\pd_1R_\psi}^\perp\in W^{\ep,1/\tp{1-\ep}}\tp{\T}
    \end{equation}
    has the desired mapping properties.

    By Lemmas~\ref{lemma on bounds on the Cauchy Riemann solver} and~\ref{lem on mapping properties of the composition operators}, the Sobolev embedding
    \begin{equation}
        H^\ep\tp{\T\times\tp{0,h}}\emb L^{2/(1-\ep)}\tp{\T\times\tp{0,h}},
    \end{equation}
    and the boundedness of the product map
    \begin{equation}
        L^{2/\tp{1 - \ep}}\tp{\T\times\tp{0,h}}\times L^{2/\tp{1 - \ep}}\tp{\T\times\tp{0,h}}\to L^{1/\tp{1 - \ep}}\tp{\T\times\tp{0,h}}
    \end{equation}
    we deduce that
    \begin{equation}
        \z{H}^{1/2 + \ep}\tp{\T}\ni\psi\mapsto\tp{\tabs{\grad R_\psi}^2/2}\tp{\grad\cdot\mathsf{f}}\circ R_\psi\in L^{1/\tp{1-\ep}}\tp{\T\times\tp{0,h}}
    \end{equation}
    is a well-defined mapping that is Fr\'{e}chet $C^1$ and maps bounded sets to bounded sets. Upon synthesizing the above, we complete the proof of the middle mapping property of equation~\eqref{map_l_o_t}.

    The proof of the final mapping property of~\eqref{map_l_o_t} is much the same. By similar arguments, we deduce that the following mappings are well-defined and map bounded sets to bounded sets:
    \begin{equation}\label{RR_0}
        \z{H}^{1 + \al/2}\tp{\T}\ni\psi\mapsto\tp{\tp{\tabs{\grad R_\psi}^2/2}\tp{\grad\cdot\mathsf{f}}\circ R_\psi,\m{Tr}_{\T\times\tcb{0}}\tp{\mathsf{f}\circ R_\psi}\cdot\tp{\pd_1 R_\psi}^\perp}\in L^{2/(1-\al)}\tp{\T}\times W^{(1+\al)/2,2/(1-\al)}\tp{\T}
    \end{equation}
    and
    \begin{equation}\label{RR_1}
        S:L^{2/\tp{1 - \al}}\tp{\T}\times W^{(1+\al)/2,2/(1-\al)}\tp{\T}\to W^{(1+\al)/2,2/(1-\al)}\tp{\T}.
    \end{equation}
    So the result follows by composing~\eqref{RR_0} and~\eqref{RR_1} and appealing to the embedding
    \begin{equation}
        W^{(1+\al)/2,2/(1-\al)}\tp{\T}\emb C^\al\tp{\T}.
    \end{equation}
\end{proof}

The final result of this subsection develops important commutator estimates for certain simple Fourier multipliers that we introduce now. For suitable functions $\phi$ we define $\mathbb{P}_\pm\phi$ via
\begin{equation}\label{definition of P_plus_minus}
    \mathscr{F}[\mathbb{P}_\pm\phi]\tp{\xi} = \mathds{1}_{\N^+}\tp{\pm\xi}\mathscr{F}\tsb{\phi}\tp{\xi},\quad\xi\in\Z.
\end{equation}
Note that $\P_+\phi + \P_-\phi = \phi - \int_{\T}\phi$; moreover if $\phi$ is $\R$ valued, then $\P_+\phi$ is $\R$-valued if and only if it vanishes identically.
\begin{prop}[Commutator estimates for $\mathbb{P}_\pm$]\label{prop on Pplus commutators}
    For $0<\ep\le1/2$, $\ep<\del$, there exists $C\in\R^+$ such that
\begin{equation}\label{comu_1}
  \tnorm{\P_\pm\tp{f\pd_1g} - f\P_\pm\pd_1g}_{W^{\ep,1/\tp{1 - \ep}}\tp{\T}}\le C\tnorm{f}_{H^{1/2 + \del}\tp{\T}}\tnorm{g}_{H^{1/2 + \ep}\tp{\T}}.
    \end{equation}
For $\beta, \gamma \in \R$, there exists $C\in \R^+$ such that     \begin{equation}\label{comu_3}
       \begin{cases} \tnorm{\P_\pm\tp{f\pd_1g} - f\P_\pm\pd_1g}_{B^{\beta+\gamma-1, \infty}_\infty\tp{\T}}\le C\|f\|_{B^{\gamma, \infty}_\infty(\T)}\tnorm{g}_{C^\be\tp{\T}}\quad\text{if~}\beta<1~\text{and~}\beta+\gamma>1,\\
        \tnorm{\P_\pm\tp{f\pd_1g} - f\P_\pm\pd_1g}_{B^{\gamma, \infty}_\infty\tp{\T}}\le C\|f\|_{B^{\gamma, \infty}_\infty(\T)}\tnorm{g}_{C^1\tp{\T}}\quad\text{if~}\gamma>0.
        \end{cases}
    \end{equation}
\end{prop}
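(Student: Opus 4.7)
The plan is to combine Bony's paradifferential decomposition with the observation that $\P_\pm$, being a zero-order Fourier multiplier whose symbol is the characteristic function of a half-line in $\Z$, commutes with multiplication up to a one-derivative smoothing. Equivalently, one may write $\P_\pm = \tfrac{1}{2}\sp{I - \int_{\T}} \pm \tfrac{\ii}{2} H$ with $H$ the periodic Hilbert transform, reducing every commutator estimate to a classical one for $H$. Applying Bony's formula to both $\P_\pm(f \partial_1 g)$ and $f \P_\pm \partial_1 g$ gives
\begin{equation*}
\P_\pm(f \partial_1 g) - f \P_\pm \partial_1 g = \sb{\P_\pm, T_f}\partial_1 g + \sp{\P_\pm T_{\partial_1 g} - T_{\P_\pm \partial_1 g}}f + \sp{\P_\pm R(f, \partial_1 g) - R(f, \P_\pm \partial_1 g)},
\end{equation*}
where $T$ is the paraproduct and $R$ the resonant remainder built with the dyadic blocks $\Updelta_j$ already introduced in~\eqref{I_LOVE_BESOV_SPACES}.

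For the principal term I would use dyadic localization: $[\P_\pm, T_f]\partial_1 g = \sum_j [\P_\pm, S_{j-N} f]\Updelta_j \partial_1 g$. Since the symbol of $\P_\pm$ is locally constant on each dyadic annulus of positive or negative frequency, only the boundary frequency $\xi=0$ contributes to each commutator block, yielding the pointwise-in-$j$ bound
\begin{equation*}
\tnorm{\sb{\P_\pm, S_{j-N} f}\Updelta_j \partial_1 g}_{L^p(\T)} \lesssim \tnorm{S_{j-N} f}_{L^\infty(\T)}\tnorm{\Updelta_j g}_{L^p(\T)}
\end{equation*}
via a frequency-localized commutator argument plus Bernstein. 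Thus the $2^j$ loss from $\partial_1 g$ is absorbed, and dyadic summation with the appropriate Besov weight produces the claimed target-space bounds; the Sobolev embedding $H^{1/2+\delta}(\T)\emb L^\infty(\T)$ supplies the $f$-factor in \eqref{comu_1}, while the analogous $B^{\gamma,\infty}_\infty$ estimate is direct in \eqref{comu_3}. The second piece is handled identically after noting that $\partial_1 g - \P_\pm \partial_1 g$ differs from $\P_\mp \partial_1 g$ by a constant, so the paraproduct mismatch is again supported in a single low-frequency window and is trivially smoothing. For the resonant remainder, the regularity hypotheses $\beta + \gamma > 1$ in \eqref{comu_3} and $\delta > \ep$ in \eqref{comu_1} (so that $(1/2 + \delta) + (1/2 + \ep) > 1$) place us in the regime where $R(\cdot, \cdot)$ converges absolutely with a one-derivative gain, and $L^p$-boundedness of the Hilbert transform for $1 < p < \infty$ closes the estimate.

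The main obstacle is recovering the fractional Sobolev regularity $W^{\ep, 1/(1-\ep)}(\T)$ in \eqref{comu_1} rather than a plain $L^{1/(1-\ep)}$ bound. The strategy is to exploit the identification $W^{\ep, 1/(1-\ep)}(\T) \simeq B^{\ep, 1/(1-\ep)}_{1/(1-\ep)}(\T)$, apply the pointwise commutator bound dyadically, and use the sharp Bernstein inequality $\tnorm{\partial_1 S_{j-N} f}_{L^\infty} \lesssim 2^{j(1-\delta)}\tnorm{f}_{C^\delta}$, summability then requires $\sum_j 2^{-j(\delta - \ep)} < \infty$, which is precisely the strict gap $\delta > \ep$ assumed in the hypothesis. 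The second variant of \eqref{comu_3} with $g \in C^1$ uses no fractional smoothing but simply distributes a full derivative across the commutator to recover regularity $\gamma$ instead of $\beta + \gamma - 1$; this is the endpoint $\beta = 1$ of the first variant and is handled by absorbing the derivative directly into $\partial_1 g$.
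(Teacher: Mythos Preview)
Your Bony-decomposition framework is a legitimate route, but you have the roles of the first two paraproduct pieces reversed, and this is a genuine gap. The piece $[\P_\pm, T_f]\partial_1 g = \sum_j [\P_\pm, S_{j-N}f]\Updelta_j\partial_1 g$ that you call the ``principal term'' is in fact trivial: for $j$ large enough the factor $\Updelta_j\partial_1 g$ is supported at $\tabs{\xi}\sim 2^j$ while $S_{j-N}f$ is supported at $\tabs{\xi}\lesssim 2^{j-N}$, so the product sits in two bands bounded away from $\xi=0$ and $\P_\pm$ commutes exactly with multiplication by $S_{j-N}f$ on such inputs. Each block vanishes identically; your displayed estimate holds with left-hand side equal to zero.

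The actual main contribution is the second piece $\P_\pm T_{\partial_1 g}f - T_{\P_\pm\partial_1 g}f$, which you dismiss as ``trivially smoothing.'' Your justification --- that $\partial_1 g - \P_\pm\partial_1 g$ equals $\P_\mp\partial_1 g$ --- does not make this term small: it is not equal to $T_{\partial_1 g - \P_\pm\partial_1 g}f$, since that would presuppose exactly the commutation you are trying to measure. In this piece $f$ is the high-frequency factor and $\partial_1 g$ the low one; splitting $\Updelta_j f$ into its $\P_+$ and $\P_-$ parts you find nonvanishing blocks of the form $(S_{j-N}\P_\mp\partial_1 g)(\P_\pm\Updelta_j f)$, each of size $\sim 2^{j(1-\be)}\tnorm{g}_{C^\be}\cdot 2^{-j\gam}\tnorm{f}_{B^{\gam,\infty}_\infty}$ and localized at frequency $\sim 2^j$. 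Summing these is exactly what produces the $B^{\be+\gam-1,\infty}_\infty$ bound in~\eqref{comu_3} and, with the appropriate $L^2$--$L^{2/(1-2\ep)}$ H\"older splitting, the $W^{\ep,1/(1-\ep)}$ bound in~\eqref{comu_1}. The paper sidesteps this bookkeeping entirely by observing the Fourier-support identity $\Lambda^+(f,g)=\sum_j\Lambda^+(\Updelta_j f,\Upsigma_{j+10}g)$ directly, which is precisely your second piece and resonant term packaged together as a single high-$f$--low-$g$ paraproduct, and then estimates each block via Bernstein and H\"older. Your resonant argument is fine, and the fix for the second piece is just to carry out the dyadic estimate you wrote for the first piece with the roles of $f$ and $g$ swapped; but as written the proof does not control the dominant term.
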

\begin{proof}
    Before we enter the heart of the proof, we must first introduce the necessary notation associated with standard Littlewood-Paley decompositions. Let us fix a pair of smooth functions $\chi,\tilde{\chi}\in C^\infty\tp{\R}$ such that: $\tilde{\chi}\tp{\xi} = 0$ when $|\xi|\ge6/5$, $\chi(\xi) = 1$ for $1\le|\xi|\le 2$, $\chi\tp{\xi} = 0$ for $|\xi|>11/5$ and $|\xi|<4/5$, and
    \begin{equation}\label{the conditions on the chi_bros}
        \tilde{\chi}\tp{\xi} + \sum_{j=0}^\infty\chi(\xi/2^j) = 1\text{ for all }\xi\in\R.
    \end{equation}
    The associated sequences of Fourier multiplication operators $\tcb{\Updelta_j}_{j\in\N}$ and $\tcb{\Upsigma_j}_{j\in\N}$ are then defined via
    \begin{equation}\label{LP localization operators}
        \mathscr{F}[\Updelta_j\phi]\tp{\xi} = \begin{cases}
            \tilde{\chi}\tp{\xi}\mathscr{F}[\psi]\tp{\xi}&\text{if }j=0,\\
            \chi\tp{\xi/2^{j-1}}\mathscr{F}[\psi]\tp{\xi}&\text{if }j>0,
        \end{cases}\text{ and }\Upsigma_j = \sum_{i=0}^j\Updelta_i.
    \end{equation}
    The identity~\eqref{the conditions on the chi_bros} ensures that the Littlewood-Paley decomposition $\phi = \sum_{j=0}^\infty\Updelta_j\phi$ holds in a suitable sense for sufficiently regular $\phi$.

    We only prove estimates~\eqref{comu_1} and~\eqref{comu_3} for the case $\P_+$; the analysis for $\P_-$ is identical. Let us now write
    \begin{equation}\label{short_hand_bilinear}
        \Lambda^+\tp{f,g} = \P_+\tp{f\pd_1g} - f\P_+\pd_1g
    \end{equation}
    to denote the action of the bilinear operator under consideration. By Fourier support considerations, we deduce that~\eqref{short_hand_bilinear} is truly a high-low paraproduct in the sense that 
    \begin{equation}\label{high-low form of the commutator}
        \Lambda^+\tp{f,g} = \sum_{j=0}^\infty\Lambda^+\tp{\Updelta_jf,\Upsigma_{j+10}g}.
    \end{equation}
    The $j^{\m{th}}$ term in the series above is supported in the centered interval of length $2^{j+20}$.

    We first prove the claimed estimate~\eqref{comu_1}. By using~\eqref{high-low form of the commutator}, support considerations, and H\"older's inequality, we first estimate for any $k\in\N$
    \begin{equation}\label{eq_00}
        \tnorm{\Updelta_k\tp{\Lambda^+\tp{f,g}}}_{L^{1/\tp{1 - \ep}}\tp{\T}}\lesssim\sum_{j\ge\max\tcb{0,k-20}}\tnorm{\Updelta_jf}_{L^{2/(1-2\ep)}\tp{\T}}\tnorm{\Upsigma_{j+10}\pd_1g}_{L^2\tp{\T}}.
    \end{equation}
    Then, thanks to Bernstein-type inequalities, we may further bound for each $j$
    \begin{equation}\label{eq_01}
        \tnorm{\Updelta_jf}_{L^{2/(1-2\ep)}\tp{\T}}\lesssim 2^{\ep j}\tnorm{\Updelta_jf}_{L^2\tp{\T}}\text{ and }\tnorm{\Upsigma_{j+10}\pd_1g}_{L^2\tp{\T}}\lesssim 2^{\tp{1/2 - \ep}j}\tnorm{g}_{H^{1/2 + \ep}\tp{\T}}.
    \end{equation}
    By combining~\eqref{eq_00} and~\eqref{eq_01} with the Littlewood-Paley norm on $W^{\ep,1/(1-\ep)}\tp{\T}$ (see~\eqref{I_LOVE_BESOV_SPACES}), we get
    \begin{equation}\label{eq_02}
        \tnorm{\Lambda^+(f,g)}_{W^{\ep,1/(1-\ep)}\tp{\T}}\lesssim\tnorm{g}_{H^{1/2 + \ep}\tp{\T}}\bp{\sum_{k=0}^\infty\bp{2^{\ep k}\sum_{j\ge\max\tcb{0,k-20}}2^{j/2}\tnorm{\Updelta_j f}_{L^2\tp{\T}}}^{1/(1-\ep)}}^{1-\ep}.
    \end{equation}
    In turn, Young's convolution inequality grants us the bound
    \begin{equation}\label{eq_03}
        \bp{\sum_{k=0}^\infty\bp{2^{\ep k}\sum_{j\ge\max\tcb{0,k-20}}2^{j/2}\tnorm{\Updelta_j f}_{L^2\tp{\T}}}^{1/(1-\ep)}}^{1-\ep}\lesssim\tnorm{f}_{B^{1/2 + \ep,2}_{1/(1-\ep)}\tp{\T}}
    \end{equation}
    with the implicit constant depending in particular on $\ep>0$. To conclude~\eqref{comu_1} from~\eqref{eq_02} and~\eqref{eq_03} we use $\del>\ep$ and the embedding
    \begin{equation}
        H^{1/2 + \del}\tp{\T}\emb B^{1/2 + \ep,2}_{1/(1-\ep)}\tp{\T}.
    \end{equation}

    Next, let us justify the estimates in ~\eqref{comu_3}. Similarly to \eqref{eq_00}, we have
   \begin{equation}\label{eq_05}
    \tnorm{\Updelta_k\tp{\Lambda^+\tp{f,g}}}_{L^{\infty}\tp{\T}}\lesssim\sum_{j\ge\max\tcb{0,k-20}}\tnorm{\Updelta_jf}_{L^\infty\tp{\T}}\tnorm{\Upsigma_{j+10}\pd_1g}_{L^\infty\tp{\T}}.
    \end{equation}
    We have
    \begin{equation}
        \tnorm{\Upsigma_{j+10}\pd_1g}_{L^\infty}\lesssim 2^{j(1-\be)}\tnorm{g}_{C^\be\tp{\T}}\text{ and }\tnorm{\Updelta_j f}_{L^\infty\tp{\T}}\lesssim 2^{-j\gam}\tnorm{f}_{B^{\gam,\infty}_{\infty}\tp{\T}}.
    \end{equation}
    It follows that
    \begin{equation}
        \tnorm{\Updelta_k\tp{\Lambda^+\tp{f,g}}}_{L^\infty\tp{\T}}\lesssim 2^{k\tp{1 - \be - \gam}}\tnorm{f}_{B^{\gam,\infty}_{\infty}\tp{\T}}\tnorm{g}_{C^\be\tp{\T}}.
    \end{equation}
    This implies the estimates in~\eqref{comu_3}.
\end{proof}

\subsection{Instantiation of the nonlinear operator and decompositions}\label{subsection on instantiation of the nonlinear operator and decompositions}

This subsection's first result formalizes the left hand side of identity~\eqref{reformulation for the coordinate function psi} as an operator acting between Banach spaces. We recall that the operators $S$ and $G$ are those defined in equation~\eqref{help_solve_2}.

\begin{lem}[Operator encoding]\label{lem on operator encoding}
    Fix $0<\al<1$, $\mathsf{f}\in C^4\tp{\T\times\R;\R^2}$, $\upphi\in C^4\tp{\T\times\R}$, and $\R\ni\m{c},\m{g}\ge0$. The map $\bf{P}:\R\times\z{C}^{1,\al}\tp{\T}\to\z{C}^\al\tp{\T}$ with action given by
    \begin{multline}\label{formal_operator_encoding}
        \bf{P}\tp{\upkappa,\psi} = G\tp{\m{g}\psi + \upkappa^2\m{Tr}_{\T\times\tcb{h}}\upphi\circ R_\psi} - \m{c}\pd_1\psi - \upkappa^2\m{Tr}_{\T\times\tcb{h}}\tp{\mathsf{f}\circ R_\psi}\cdot\tp{\pd_1 R_\psi}^\perp\\
        +\upkappa^2S\tp{\tp{\tabs{\grad R_\psi}^2/2}\tp{\grad\cdot\mathsf{f}}\circ R_\psi,\m{Tr}_{\T\times\tcb{0}}\tp{\mathsf{f}\circ R_\psi}\cdot\tp{\pd_1 R_\psi}^\perp}
    \end{multline}
    is well-defined and continuously differentiable.
\end{lem}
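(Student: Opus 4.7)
The plan is to split $\bf{P}$ into four pieces, read off the regularity of each from the lemmas of this subsection, and then verify by a Stokes-type identity that the output lands in the mean-zero subspace $\z{C}^\al\tp{\T}$. Concretely, I would write
\begin{equation*}
\bf{P}\tp{\upkappa,\psi} = \m{T}_1\tp{\upkappa,\psi} + \m{T}_2\tp{\psi} + \m{T}_3\tp{\upkappa,\psi} + \m{T}_4\tp{\upkappa,\psi}
\end{equation*}
with $\m{T}_1 = G\tp{\m{g}\psi + \upkappa^2\m{Tr}_{\T\times\tcb{h}}\upphi\circ R_\psi}$, $\m{T}_2 = -\m{c}\pd_1\psi$, $\m{T}_3 = -\upkappa^2\m{Tr}_{\T\times\tcb{h}}\tp{\mathsf{f}\circ R_\psi}\cdot\tp{\pd_1 R_\psi}^\perp$, and $\m{T}_4 = \upkappa^2\bf{K}\tp{\psi}$ for $\bf{K}$ as in~\eqref{defn_l_o_t}.

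For Fr\'echet $C^1$ regularity of each piece: $\m{T}_2$ is bounded linear in $\psi$, hence $C^\infty$. The Fourier multiplier~\eqref{form:DN} defining $G$ is of order one, so standard H\"older multiplier estimates (e.g.~\cite{MR1232192} or Appendix A of~\cite{NGCD}) yield that $G:C^{1,\al}\tp{\T}\to C^\al\tp{\T}$ is bounded linear; combining this with the composition statement~\eqref{map_2} of Lemma~\ref{lem on mapping properties of the composition operators} (with $\upiota = 1$) and the polynomial $\upkappa$-dependence proves $\m{T}_1$ is $C^1$. For $\m{T}_3$, the third component of~\eqref{map_1} yields $\psi\mapsto\m{Tr}_{\T\times\tcb{h}}\mathsf{f}\circ R_\psi\in C^{1,\al}\tp{\T;\R^2}$ as a $C^1$ map, while Lemma~\ref{lemma on bounds on the Cauchy Riemann solver} supplies the trace of $\pd_1 R_\psi$ in $C^\al\tp{\T;\R^2}$ as a bounded affine map; their pointwise product lies in $C^\al\tp{\T}$ by the bilinear boundedness of multiplication $C^{1,\al}\times C^\al\to C^\al$. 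Finally, $\m{T}_4$ is $C^1$ into $C^{1,\al}\tp{\T}\emb C^\al\tp{\T}$ directly from the first mapping property in~\eqref{map_l_o_t} of Lemma~\ref{lem on analysis of the lower order bulk remainder}.

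The zero-mean check is the only point with real content. The integral of $\m{T}_1$ over $\T$ vanishes because the Fourier symbol~\eqref{form:DN} of $G$ is zero at $\xi = 0$, and the integral of $\m{T}_2$ vanishes because $\pd_1\psi$ is a derivative of a periodic function. For the remaining pair, integrating the defining PDE~\eqref{help_solve_3} of $S$ over $\T\times\tp{0,h}$ yields $\int_\T S\tp{f^1,f^2} = \int_{\T\times(0,h)} f^1 + \int_\T f^2$. Combining this with the Cauchy-Riemann identities~\eqref{CR bulk}, which imply the conformality relation $\tabs{\grad R_\psi}^2/2 = \det\grad R_\psi$, and with a Stokes' theorem application on the reference cylinder $\T\times\tp{0,h}$ to the pullback under $R_\psi$ of the $1$-form $\omega = -\mathsf{f}^2 dx^1 + \mathsf{f}^1 dx^2$---a calculation intrinsic to $\T\times\tp{0,h}$ that does not require $R_\psi$ to be a diffeomorphism onto its image---I obtain
\begin{equation*}
\int_{\T\times(0,h)}\tp{\tabs{\grad R_\psi}^2/2}\tp{\grad\cdot\mathsf{f}}\circ R_\psi = \int_\T\m{Tr}_{\T\times\tcb{h}}\tp{\mathsf{f}\circ R_\psi}\cdot\tp{\pd_1 R_\psi}^\perp - \int_\T\m{Tr}_{\T\times\tcb{0}}\tp{\mathsf{f}\circ R_\psi}\cdot\tp{\pd_1 R_\psi}^\perp,
\end{equation*}
and substitution yields $\int_\T\tp{\m{T}_3 + \m{T}_4} = 0$. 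The main anticipated obstacle is precisely this compatibility identity: all other points are bookkeeping from the lemmas already developed in this subsection, but the mean-zero step is where the conformal geometry of $R_\psi$ genuinely has to be used.
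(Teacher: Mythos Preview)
Your proposal is correct and follows essentially the same route as the paper. Both arguments split $\bf{P}$ into its constituent pieces, invoke Lemmas~\ref{lemma on bounds on the Cauchy Riemann solver}--\ref{lem on analysis of the lower order bulk remainder} for the $C^1$ mapping properties into $C^\al\tp{\T}$, and then verify the mean-zero condition by integrating the defining PDE for $S$ and exploiting the conformal structure of $R_\psi$; the paper phrases the key compatibility as the divergence identity $\tp{\tabs{\grad R_\psi}^2/2}\tp{\grad\cdot\mathsf{f}}\circ R_\psi = \grad\cdot\tp{\tsb{\grad R_\psi}^{\m{t}}\tp{\mathsf{f}\circ R_\psi}}$ followed by the divergence theorem, whereas you phrase it as Stokes' theorem on the pullback $R_\psi^*\omega$ of a $1$-form, but these are the same computation. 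Your explicit remark that this identity is intrinsic to the reference strip and does not require $R_\psi$ to be a diffeomorphism is a worthwhile point, since $\bf{P}$ must be defined on all of $\z{C}^{1,\al}\tp{\T}$ and not merely on $\mathcal{U}$.
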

\begin{proof}
    We begin by noting that by standard Schauder theory (see, e.g., \cite{MR1814364,MR1459795}) the linear map $G$, defined in~\eqref{help_solve_2}, maps boundedly $G:C^{1,\al}\tp{\T}\to\z{C}^\al\tp{\T}$. Thus after heeding Lemmas~\ref{lem on mapping properties of the composition operators} and~\ref{lem on analysis of the lower order bulk remainder}, we deduce that $\bf{P}$, when thought of as a mapping into the larger space
    \begin{equation}\label{mapping_into_the_larger_spaces}
        \bf{P}:\R\times\z{C}^{1,\al}\tp{\T}\to C^\al\tp{\T},
    \end{equation}
    is well-defined and continuously differentiable as soon as we verify that
    \begin{equation}\label{I_am_but_a_one_trick_pony}
\z{C}^{1,\al}\tp{\T}\ni\psi\mapsto\m{Tr}_{\T\times\tcb{h}}\tp{\mathsf{f}\circ R_{\psi}}\cdot\tp{\pd_1R_\psi}^\perp\in C^\al\p{\T}
    \end{equation}
    has the same desired mapping properties. But this latter fact is a simple upshot of Lemmas~\ref{lemma on bounds on the Cauchy Riemann solver} and~\ref{lem on mapping properties of the composition operators} and the fact that $C^\al\tp{\T}$ is a Banach algebra.

    To complete the proof we need to check that the map of equation~\eqref{mapping_into_the_larger_spaces} actually has image within the subspace of functions with vanishing zeroth Fourier mode. A straightforward computation using the chain rule and the Cauchy-Riemann equations satisfied by $R_\psi$ shows that
    \begin{equation}\label{the_magic_identity}
        \tp{\tabs{\grad R_\psi}^2/2}\tp{\grad\cdot\mathsf{f}}\circ R_\psi = \grad\cdot\tp{\tsb{\grad R_\psi}^{\m{t}}\tp{f\circ R_\psi}},
    \end{equation}
    with the superscript `$\m{t}$' denoting the matrix transpose. Now let $u\in C^{2,\al}\tp{\T\times\tp{0,h}}$ denote the solution to the left hand equations of~\eqref{help_solve_3} so that
    \begin{equation}\label{not_so_magical}
        \m{Tr}_{\T\times\tcb{h}}\pd_2 u = S\tp{\tp{\tabs{\grad R_\psi}^2/2}\tp{\grad\cdot\mathsf{f}}\circ R_\psi,\m{Tr}_{\T\times\tcb{0}}\tp{\mathsf{f}\circ R_\psi}\cdot\tp{\pd_1 R_\psi}^\perp}.
    \end{equation}
    Integration of the bulk equations satisfied by $u$ over the set $\T\times\tp{0,h}$ when paired with identity~\eqref{the_magic_identity} and the divergence theorem shows
    \begin{multline}\label{intermediate_identity}
       \int_{\T\times\tcb{h}}\pd_2u-\int_{\T\times\tcb{0}} \tp{\mathsf{f}\circ R_\psi}\cdot\tp{\pd_1R_\psi}^\perp = \int_{\T\times\tp{0,h}}\Delta u = \int_{\T\times\tp{0,h}}\f12\tabs{\grad R_\psi}^2\tp{\grad\cdot\mathsf{f}}\circ R_\psi  \\= \int_{\T\times\tcb{h}}\tp{\mathsf{f}\circ R_\psi}\cdot\grad R_\psi e_2-\int_{\T\times\tcb{0}} \tp{\mathsf{f}\circ R_\psi}\cdot\grad R_\psi e_2.
    \end{multline}
    The Cauchy-Riemann equations satisfied by $R_\psi$ also inform us that $\grad R_\psi e_2 = \tp{\pd_1R_\psi}^\perp$; hence, identities~\eqref{intermediate_identity} and~\eqref{not_so_magical} imply that
    \begin{equation}
        \int_{\T}S\tp{\tp{\tabs{\grad R_\psi}^2/2}\tp{\grad\cdot\mathsf{f}}\circ R_\psi,\m{Tr}_{\T\times\tcb{0}}\tp{\mathsf{f}\circ R_\psi}\cdot\tp{\pd_1 R_\psi}^\perp} = \int_{\T\times\tcb{h}}\tp{\mathsf{f}\circ R_\psi}\cdot\tp{\pd_1R_\psi}^\perp.
    \end{equation}
    
    The mapping of $\bf{P}$ into the subspace $\z{C}^\al\tp{\T}$ is now established.
\end{proof}

Note that in the operator encoding~\eqref{formal_operator_encoding} we have replaced the $\kappa$ that appears in~\eqref{reformulation for the coordinate function psi} by $\upkappa^2$. This is done purely for technical reasons related to the desire to have smooth dependence on $\upkappa$ and the nonnegativity $\kappa\ge0$.

Our next result constructs a critically important decomposition for the operator $\bf{P}$ from~\eqref{formal_operator_encoding}. We shall split $\bf{P}$ into the sum of a leading order term and a compact, lower order remainder. Recall the notation for the projection operators $\P_\pm$ introduced in equation~\eqref{definition of P_plus_minus}.

\begin{prop}[Operator decomposition]\label{prop on operator decomposition}
    Let $\al$, $\mathsf{f}$, $\upphi$, $\m{c}$, and $\m{g}$ be as in Lemma~\ref{lem on operator encoding}. For $\upkappa\in\R$ and $\psi\in\z{C}^{\al}\tp{\T}$ we write
    \begin{equation}
        \mathsf{X}_{\upkappa,\psi} = \upkappa^2\m{Tr}_{\T\times\tcb{h}}\tp{\mathsf{f}_1 - \pd_1\upphi}\circ R_\psi - \m{c}\text{ and }\mathsf{Y}_{\upkappa,\psi} = \upkappa^2\m{Tr}_{\T\times\tcb{h}}\tp{\pd_2\upphi - \mathsf{f}_2}\circ R_\psi + \m{g}.
    \end{equation}
    The following hold.
    \begin{enumerate}
        \item For $\upiota\in\tcb{0,1}$ the maps 
        \begin{equation}
            \z{C}^{\upiota,\al}\tp{\T}\ni\psi\mapsto\mathsf{X}_{\upkappa,\psi},\mathsf{Y}_{\upkappa,\psi}\in C^{\upiota,\al}\tp{\T}
        \end{equation}
        are well-defined, continuously differentiable, and map bounded sets to bounded sets.
        \item There exists a continuously differentiable and compact operator $\bf{Q}:\R\times\z{C}^{1,\al}\tp{\T}\to\z{C}^\al\tp{\T}$ such that
        \begin{equation}\label{decomposing_identity_stinky}
            \bf{P}\tp{\upkappa,\psi} = \m{Re}\tsb{\P_+\tp{\tp{\mathsf{X}_{\upkappa,\psi} - \ii\mathsf{Y}_{\upkappa,\psi}}\P_+\pd_1\psi} + \P_-\tp{\tp{\mathsf{X}_{\upkappa,\psi} + \ii\mathsf{Y}_{\upkappa,\psi}}\P_-\pd_1\psi}} + \bf{Q}\tp{\upkappa,\psi}
        \end{equation}
        for all $\upkappa\in\R$ and $\psi\in\z{C}^{1,\al}\tp{\T}$.
        \item Assume that one of the hidden ellipticity conditions of equation~\eqref{conditions for the hidden ellipticity} holds. Then, for all $\upkappa\in\R$ and $\psi\in\z{C}^{\al}\tp{\T}$ the $\C$-valued functions $\mathsf{X}_{\upkappa,\psi} \pm\ii\mathsf{Y}_{\upkappa,\psi}$ are nowhere vanishing and for $\upiota\in\tcb{0,1}$ the maps
        \begin{equation}\label{you_make}
            \R\times\z{C}^{\upiota,\al}\tp{\T}\ni\tp{\upkappa,\psi}\mapsto\tp{\mathsf{X}_{\upkappa,\psi} \pm \ii\mathsf{Y}_{\upkappa,\psi}},\;\tp{\mathsf{X}_{\upkappa,\psi} \pm \ii\mathsf{Y}_{\upkappa,\psi}}^{-1}\in C^{\upiota,\al}\tp{\T;\C}
        \end{equation}
        are well-defined, continuously differentiable, and map bounded sets to bounded sets.
    \end{enumerate}
\end{prop}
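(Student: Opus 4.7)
Assertion (1) follows directly from Lemma~\ref{lem on mapping properties of the composition operators}, applied to the $C^4$ components of $\mathsf{f}$ and to $\pd_1\upphi,\pd_2\upphi\in C^3(\T\times\R)$---the composition results invoked in that lemma's proof require only $C^3$ regularity of the outer function when the codomain is $C^{\upiota,\al}(\T)$. Finite linear combinations and addition of the constants $-\m{c},\m{g}$ preserve every property.

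\textbf{Decomposition (2): principal-symbol extraction.} The plan is to peel the top-order linear-in-$\psi$ contribution off each summand of~\eqref{formal_operator_encoding} modulo a remainder that lands in $C^{1,\al}(\T)$, which embeds compactly in $\z{C}^\al(\T)$. Two Fourier-multiplier asymptotics power the reduction: from~\eqref{form:DN}, $G=2\pi|D|+\Lambda$ with $\Lambda$ exponentially smoothing; and from~\eqref{definition of the Cauchy Riemann solver}, the operator $H_h:=\m{Tr}_{\T\times\tcb{h}}(\mathcal{E}\cdot)_1$ satisfies $\P_+\pd_1 H_h\psi=-i\,\P_+\pd_1\psi+\m{smoothing}$. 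Expanding $\pd_1(\upphi\circ R_\psi)|_{z=h}$ and $(\mathsf{f}\circ R_\psi)\cdot(\pd_1R_\psi)^\perp|_{z=h}$ by the chain rule, invoking the compositional coefficients $(\pd_i\upphi)\circ R_\psi$, $\mathsf{f}_i\circ R_\psi\in C^{1,\al}$ supplied by Lemma~\ref{lem on mapping properties of the composition operators}, using $2\pi|D|g=-i\,\P_+\pd_1 g+i\,\P_-\pd_1 g$ for real $g$, and collecting signs, I expect the principal piece to collapse to
\begin{equation*}
    (\mathsf{X}_{\upkappa,\psi}-i\mathsf{Y}_{\upkappa,\psi})\P_+\pd_1\psi+(\mathsf{X}_{\upkappa,\psi}+i\mathsf{Y}_{\upkappa,\psi})\P_-\pd_1\psi.
\end{equation*}
The $\upkappa^2 S(\cdots)$ summand is delivered directly to $\bf{Q}$ by Lemma~\ref{lem on analysis of the lower order bulk remainder}, which places it in $C^{1,\al}(\T)$.

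\textbf{Passage to the $\P_\pm$-form of~\eqref{decomposing_identity_stinky}.} For real $\psi,\mathsf{X},\mathsf{Y}$ and $w:=(\mathsf{X}-i\mathsf{Y})\P_+\pd_1\psi$, the principal piece equals $w+\bar w=2\m{Re}\,w$, while the target operator in~\eqref{decomposing_identity_stinky} equals $2\m{Re}\,\P_+ w$; their difference is $2\m{Re}\,(w-\P_+ w)=2\m{Re}\,\P_- w$ (modulo the mean, which is harmlessly compact), and $\P_- w$ is exactly the commutator $[\P_-,M_{\mathsf{X}-i\mathsf{Y}}]\pd_1(\P_+\psi)$. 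Proposition~\ref{prop on Pplus commutators}, applied via~\eqref{comu_3} with $f=\mathsf{X}-i\mathsf{Y}\in C^{1,\al}(\T;\C)$ (by (1)), $g=\P_+\psi\in C^{1,\al}(\T)$, and $\gam=1+\al$, places this commutator in $C^{1,\al}(\T)$, compactly embedded in $\z{C}^\al(\T)$. Each coefficient-$\P_\pm$ commutator generated by the extraction of the previous paragraph is absorbed similarly. Aggregating compact remainders defines $\bf{Q}$, whose $C^1$-regularity follows because each constituent is either a continuous linear operator composed with a $C^1$-Nemytskii map or the $C^1$ operator $\bf{K}$ of Lemma~\ref{lem on analysis of the lower order bulk remainder}.

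\textbf{Claim (3) and the main obstacle.} Under the first condition of~\eqref{conditions for the hidden ellipticity}, $\mathsf{Y}_{\upkappa,\psi}\ge\m{g}>0$ pointwise, whence $\m{Im}(\mathsf{X}\pm i\mathsf{Y})\ge\m{g}$; under the second, $\m{c}\,\mathsf{X}_{\upkappa,\psi}\le-\m{c}^2<0$, whence the real part is uniformly signed. In either case $\mathsf{X}\pm i\mathsf{Y}$ lies in the open subset of nowhere-vanishing elements of the Banach algebra $C^{\upiota,\al}(\T;\C)$, on which $z\mapsto z^{-1}$ is $C^\infty$; combined with (1) this proves~\eqref{you_make}. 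The chief technical burden lies in (2): tracking every remainder---the smoothing corrections from $\Lambda$ and from $H_h+i\,\m{sgn}(D)$, the compositional zeroth-order contributions, the $S$-term, and especially the $\P_\pm$-commutators---into a space compactly embedded in $\z{C}^\al(\T)$ while preserving $C^1$-dependence on $(\upkappa,\psi)$. The Besov gain of Proposition~\ref{prop on Pplus commutators} is indispensable, as without it the $\P_\pm$-form of the principal part would realize only a continuous, not a compact, perturbation of the identity.
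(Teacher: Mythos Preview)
Your proposal is correct and follows essentially the same approach as the paper. Both hinge on the Fourier-multiplier decompositions $G=-\ii\P_+\pd_1+\ii\P_-\pd_1+(\text{smoothing})$ and $\m{Tr}_{\T\times\{h\}}\pd_1(\mathcal{E}\psi)_1=-\ii\P_+\pd_1\psi+\ii\P_-\pd_1\psi+(\text{smoothing})$, on the chain-rule expansion of $\pd_1(\upphi\circ R_\psi)$ and $(\mathsf{f}\circ R_\psi)\cdot(\pd_1R_\psi)^\perp$ at $z=h$, on Lemma~\ref{lem on analysis of the lower order bulk remainder} to dispose of the $S$-term, and on the second estimate of~\eqref{comu_3} in Proposition~\ref{prop on Pplus commutators} to push every $\P_\pm$-commutator into $C^{1,\al}(\T)\hookrightarrow\hookrightarrow C^\al(\T)$.

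The only organizational difference is that the paper builds the outer $\P_\pm$ into the decompositions from the outset (its explicit formulas~\eqref{decomposition_1}--\eqref{decomposition_2} already carry the commutators $[\P_\mp,\cdot]$), and then verifies directly via the reality condition $\widehat\phi(-\xi)=\overline{\widehat\phi(\xi)}$ that the resulting principal part is real, whereas you first extract the raw principal piece $w+\bar w$ and then pass to $2\m{Re}\,\P_+w$ by one more commutator. Your route is slightly cleaner conceptually (one sees immediately why the principal part is real), while the paper's buys an explicit closed formula~\eqref{the remainder operator Q} for $\bf{Q}$, which it later exploits in Proposition~\ref{proposition on more precise operator decomposition estimates} and in the blow-up refinements of Section~\ref{subsection on blow up refinements}.
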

\begin{proof}
    The first and third items follow from arguments similar to that of the proof of Lemma~\ref{lem on mapping properties of the composition operators}; we omit the repetitive details for brevity. Thus, we shall only focus on the second item.

    We give an explicit presentation of the operator $\bf{Q}$. To begin, we define the Fourier multiplication operators $\mathbb{S}^\downarrow$ and $\mathbb{S}^{\uparrow}$ via
    \begin{multline}
        \mathscr{F}[\mathbb{S}^{\downarrow}\phi]\tp{\xi} = 2\pi|\xi|\tp{\tanh\tp{2\pi|\xi|h} - 1}\mathscr{F}[\phi]\tp{\xi}\\\text{and }\mathscr{F}[\mathbb{S}^{\uparrow}\phi]\tp{\xi} = \begin{cases}
            2\pi|\xi|\tp{\coth\tp{2\pi|\xi|h} - 1}\mathscr{F}[\phi]\tp{\xi}&\text{if }\xi\neq0,\\
            \mathscr{F}[\phi]\tp{0}&\text{if }\xi = 0,
        \end{cases}
    \end{multline}
    for $\xi\in\Z$ and suitable functions $\phi$. Notice that the symbols above decay exponentially and so the operators $\mathbb{S}^{\downarrow}$ and $\mathbb{S}^{\uparrow}$ are smoothing. The purpose of their introduction is the following pair of useful decomposition identities:
    \begin{equation}\label{multiplier_decomposition_0}
        G = \mathbb{S}^{\downarrow} - \ii\pd_1\mathbb{P}_+ + \ii\pd_1\mathbb{P}_-\text{ and }\m{Tr}_{\T\times\tcb{h}}\pd_1R_\psi = e_1 + \tp{-\ii\pd_1\mathbb{P}_+ + \ii\pd_1\mathbb{P}_-}\psi e_1 + \mathbb{S}^\uparrow\psi e_1 + \pd_1\psi e_2,
    \end{equation}
    where $G$ is the Dirichlet-to-Neumann operator \eqref{form:DN}.
    
     In the remainder of this proof, $\mathsf{f}\circ R_{\psi}$ is evaluated at $z=h$. Using the aforementioned operators and decompositions along with $[\cdot,\cdot]$ to denote the commutator bracket, we equate
    \begin{multline}\label{decomposition_1}
        \tp{\mathsf{f}\circ R_{\psi}}\cdot\tp{\pd_1 R_{\psi}}^\perp = -\mathbb{P}_+\tp{\tp{\mathsf{f}_1 + \ii \mathsf{f}_2}\circ R_{\psi}\mathbb{P}_+\pd_1\psi} -\mathbb{P}_-\tp{\tp{\mathsf{f}_1 - \ii \mathsf{f}_2}\circ R_{\psi}\mathbb{P}_-\pd_1\psi} \\ +[\tp{\mathsf{f}_1 + \ii \mathsf{f}_2}\circ R_{\psi},\mathbb{P}_-]\pd_1\mathbb{P}_+\psi + [\tp{\mathsf{f}_1 - \ii \mathsf{f}_2}\circ R_{\psi},\mathbb{P}_+]\pd_1\mathbb{P}_-\psi+\tp{\mathsf{f}_2\circ R_{\psi}}\tp{1 + \mathbb{S}^\uparrow\psi}\\-\int_{\T}\tp{\mathsf{f}_1\circ R_{\psi}}\pd_1\psi + \ii\int_{\T}\tp{\mathsf{f}_2\circ R_{\psi}}\tp{\mathbb{P}_--\mathbb{P}_+}\pd_1\psi
    \end{multline}
    and 
    \begin{multline}\label{decomposition_2}
        G\tp{\upphi\circ R_{\psi}} = -\mathbb{P}_+\tp{\tp{\pd_1\upphi + \ii\pd_2\upphi}\circ R_{\psi}\mathbb{P}_+\pd_1\psi}  -\mathbb{P}_-\tp{\tp{\pd_1\upphi - \ii\pd_2\upphi}\circ R_{\psi}\mathbb{P}_-\pd_1\psi} \\
        + [\mathbb{P}_-,\tp{\pd_1\upphi + \ii\pd_2\upphi}\circ R_{\psi}]\pd_1\mathbb{P}_+\psi
        +[\mathbb{P}_+,\tp{\pd_1\upphi - \ii\pd_2\upphi}\circ R_{\psi}]\pd_1\mathbb{P}_-\psi \\+ \mathbb{S}^{\downarrow}\tp{\upphi\circ R_{\psi}} + \ii\tp{\mathbb{P}_- - \mathbb{P}_+}\tp{\tp{\pd_1\upphi\circ R_{\psi}}\tp{1 + \mathbb{S}^\uparrow\psi}}.
    \end{multline}
    The first two terms on the right hand sides of~\eqref{decomposition_1} and~\eqref{decomposition_2} are the highest order contributions whereas the remaining terms are lower order. Substitution of~\eqref{multiplier_decomposition_0}, \eqref{decomposition_1}, and~\eqref{decomposition_2} into the definition of $\bf{P}$ in identity~\eqref{formal_operator_encoding} gives one the formula
    \begin{equation}\label{not_quite_right}
        \bf{P}\tp{\upkappa,\psi} =\P_+\tp{\tp{\mathsf{X}_{\upkappa,\psi} - \ii\mathsf{Y}_{\upkappa,\psi}}\P_+\pd_1\psi} + \P_-\tp{\tp{\mathsf{X}_{\upkappa,\psi} + \ii\mathsf{Y}_{\upkappa,\psi}}\P_-\pd_1\psi} + \bf{Q}\tp{\upkappa,\psi}
    \end{equation}
    with the operator $\bf{Q}$ explicitly written as
    \begin{multline}\label{the remainder operator Q}
        \bf{Q}\tp{\upkappa,\psi} = \upkappa^2[\mathbb{P}_-,\tp{\mathsf{f}_1 + \pd_1\upphi + \ii\tp{\mathsf{f}_2 + \pd_2\upphi}}\circ R_{\psi}]\pd_1\mathbb{P}_+\psi + \upkappa^2[\mathbb{P}_+,\tp{\mathsf{f}_1 + \pd_1\upphi - \ii\tp{\mathsf{f}_2 + \pd_2\upphi}}\circ R_{\psi}]\pd_1\mathbb{P}_-\psi \\ - \upkappa^2\tp{\mathsf{f}_2\circ R_{\psi}}\tp{1 + \mathbb{S}^\uparrow\psi} + \kappa^2\mathbb{S}^{\downarrow}\tp{\upphi\circ R_{\psi}} + \ii\upkappa^2\tp{\mathbb{P}_- - \mathbb{P}_+}\tp{\tp{\pd_1\upphi\circ R_{\psi}}\tp{1 + \mathbb{S}^\uparrow\psi}} + \m{g}\mathbb{S}^{\downarrow}\psi \\
        + \upkappa^2\int_{\T}\tp{\mathsf{f}_1\circ R_{\psi}}\pd_1\psi - \ii\upkappa^2\int_{\T}\tp{\mathsf{f}_2\circ R_{\psi}}\tp{\mathbb{P}_--\mathbb{P}_+}\pd_1\psi+ \upkappa^2\bf{K}\tp{\psi},
    \end{multline}
    where $\bf{K}:\z{C}^{1,\al}\tp{\T}\to C^{1,\al}\tp{\T}$ is the lower order operator introduced in~\eqref{defn_l_o_t}.

    To acquire the desired identity~\eqref{decomposing_identity_stinky} from~\eqref{not_quite_right} and deduce that $\bf{Q}$ is indeed $\R$-valued, we observe
    \begin{equation}\label{the important imaginary part identity}
        \m{Im}\tsb{\P_+\tp{\tp{\mathsf{X}_{\upkappa,\psi} - \ii\mathsf{Y}_{\upkappa,\psi}}\P_+\pd_1\psi} + \P_-\tp{\tp{\mathsf{X}_{\upkappa,\psi} + \ii\mathsf{Y}_{\upkappa,\psi}}\P_-\pd_1\psi}} = 0.
    \end{equation}
    Indeed, identity~\eqref{the important imaginary part identity} can be directly checked by showing the Fourier transform of the argument of the above imaginary part, call it $\phi$, satisfies the reality preservation condition $\mathscr{F}[\phi]\tp{-\xi} = \Bar{\mathscr{F}[\phi]\tp{\xi}}$, with the overline bar denoting complex conjugation.

    The final tasks of the proof are to check that $\bf{Q}$ as defined in~\eqref{the remainder operator Q} is continuously differentiable and compact as a mapping between the stated spaces. Continuous differentiability is a consequence of composing the analysis of Lemmas~\ref{lem on mapping properties of the composition operators} and~\ref{lem on analysis of the lower order bulk remainder} with simple product-type nonlinearities; in particular, there is no need to actually exploit any of the commutator cancellations.

    Compactness of $\bf{Q}$, on the other hand, does additionally require the commutator bounds of Proposition~\ref{prop on Pplus commutators}; specifically, we need the second estimate of~\eqref{comu_3}. But, once we are armed with this tool, we deduce actually that as the mapping into the smaller space
    \begin{equation}
    \R\times\z{C}^{1,\al}\tp{\T}\ni\tp{\upkappa,\psi}\mapsto\bf{Q}\tp{\upkappa,\psi}\in\z{C}^{1+\al}\tp{\T}
    \end{equation}
    the operator $\bf{Q}$ remains bounded on bounded sets. The compactness of $\bf{Q}$ as a mapping into $\z{C}^\al\tp{\T}$ is now a consequence of compact embedding $\z{C}^{1+\al}\tp{\T}\emb\z{C}^\al\tp{\T}$.
\end{proof}

The last result of this subsection records finer estimates available on the constituents of the decomposition of equation~\eqref{decomposing_identity_stinky}.

\begin{prop}[More precise operator decomposition estimates]\label{proposition on more precise operator decomposition estimates}
    Let $\al$, $\mathsf{f}$, $\upphi$, $\m{c}$, and $\m{g}$ be as in Lemma~\ref{lem on operator encoding} and let $0<\ep\le1/2$, $\ep<\del$, $0<\alpha<1$. The following hold.
    \begin{enumerate}
        \item The map $\bf{Q}$ from the second item of Proposition~\ref{prop on operator decomposition} uniquely extends to well-defined continuous functions
        \begin{equation}\label{me_feel}
            \bf{Q}:\R\times\z{H}^{1/2 + \del}\tp{\T}\to W^{\ep,1/\tp{1-\ep}}\tp{\T}\text{ and }\bf{Q}:\R\times\z{H}^{1 + \al/2}\tp{\T}\to  C^\al\tp{\T}.
        \end{equation}
        that send bounded sets to bounded sets.
        \item Assume that one of the ellipticity conditions of equation~\eqref{conditions for the hidden ellipticity} holds. Then the maps
        \begin{equation}
            \R\times\z{H}^{1/2 + \ep}\tp{\T}\ni\tp{\upkappa,\psi}\mapsto\tp{\mathsf{X}_{\upkappa,\psi}\pm\ii\mathsf{Y}_{\upkappa,\psi}},\;\tp{\mathsf{X}_{\upkappa,\psi}\pm\ii\mathsf{Y}_{\upkappa,\psi}}^{-1}\in H^{1/2 + \ep}\tp{\T;\C}
        \end{equation}
        are well-defined and send bounded sets to bounded sets.
    \end{enumerate}
\end{prop}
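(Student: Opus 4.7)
The plan is to reduce item (2) to the Sobolev compositional estimates of Lemma~\ref{lem on mapping properties of the composition operators}, and to prove item (1) by running through the explicit formula \eqref{the remainder operator Q} for $\bf{Q}$ summand by summand. The hard part will be the paraproduct commutator pieces in $\bf{Q}$, which are exactly what forces us to invoke the sharp estimates of Proposition~\ref{prop on Pplus commutators}; the rest of the terms will be either smoothing or directly covered by earlier preliminary lemmas.

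For item (2), the Sobolev mapping properties \eqref{map_3} and \eqref{map_4} place the traces appearing in $\mathsf{X}_{\upkappa,\psi}, \mathsf{Y}_{\upkappa,\psi}$ into $H^{1/2+\ep}(\T)$ continuously and with boundedness on bounded sets, which immediately yields the claim for $\mathsf{X}_{\upkappa,\psi} \pm \ii\mathsf{Y}_{\upkappa,\psi}$. Since $\ep > 0$ gives the embedding $H^{1/2+\ep}(\T) \emb C^0(\T)$, the space $H^{1/2+\ep}(\T)$ is a Banach algebra. Under \eqref{conditions for the hidden ellipticity}, the functions $\mathsf{X}_{\upkappa,\psi} \pm \ii\mathsf{Y}_{\upkappa,\psi}$ are pointwise nowhere vanishing (by the same pointwise argument used in Proposition~\ref{prop on operator decomposition}(3)) and, on any bounded subset of $\R \times \z{H}^{1/2+\ep}(\T)$, uniformly bounded away from $0$ in $C^0$ norm by some constant $c > 0$. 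Since $z \mapsto 1/z$ is smooth on $\{|z| \ge c\}$, the reciprocal maps inherit the same continuity and boundedness properties via the Banach algebra structure.

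For item (1), I would group the terms of \eqref{the remainder operator Q}. Every piece built from $\mathbb{S}^\uparrow$ or $\mathbb{S}^\downarrow$ has an exponentially decaying Fourier symbol, so these are smoothing operators; together with the $L^\infty$ control of compositions like $\upphi\circ R_\psi$, $\mathsf{f}_2\circ R_\psi$, and $\pd_1\upphi\circ R_\psi$ supplied by Lemma~\ref{lem on mapping properties of the composition operators}, the corresponding contributions are easily estimated in both $W^{\ep,1/(1-\ep)}(\T)$ and $C^\al(\T)$. The two integral terms are constant functions whose moduli are controlled by $L^2$ norms. The $\upkappa^2\bf{K}(\psi)$ term is handled verbatim by the $W^{\ep,1/(1-\ep)}$ and $C^\al$ mapping properties in \eqref{map_l_o_t}. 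The remaining, and main, obstruction comes from the commutator pieces $\upkappa^2[\mathbb{P}_\mp, F_\pm\circ R_\psi]\pd_1\mathbb{P}_\pm\psi$ with $F_\pm = (\mathsf{f}_1 + \pd_1\upphi) \pm \ii(\mathsf{f}_2 + \pd_2\upphi)$. Using $\mathbb{P}_\mp\pd_1\mathbb{P}_\pm\psi = 0$, each commutator simplifies to $\mathbb{P}_\mp(F_\pm\circ R_\psi \,\pd_1\mathbb{P}_\pm\psi)$, which matches the quantity estimated in Proposition~\ref{prop on Pplus commutators}. For the first target in \eqref{me_feel} I would apply \eqref{comu_1} with $f = F_\pm\circ R_\psi \in H^{1/2+\del}(\T)$ (from \eqref{map_4} together with the algebra property of $H^{1/2+\del}$) and $g = \mathbb{P}_\pm\psi \in H^{1/2+\del}(\T) \emb H^{1/2+\ep}(\T)$, landing the output in $W^{\ep,1/(1-\ep)}(\T)$. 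For the second target I would use the embedding $H^{1+\al/2}(\T) \emb C^{(1+\al)/2}(\T)$ to place $F_\pm\circ R_\psi$ and $g$ in the appropriate H\"older-Besov spaces, and then invoke the first estimate of \eqref{comu_3} with $\be = \gam = (1+\al)/2$, so that $\be + \gam - 1 = \al$ and the output lands in $B^{\al,\infty}_\infty(\T) = C^\al(\T)$. Uniqueness of the continuous extension is then immediate from density of $\z{C}^{1,\al}(\T)$ in the rougher input spaces, completing the proof.
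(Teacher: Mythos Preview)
Your approach is essentially the paper's own: item (2) via the compositional Sobolev results of Lemma~\ref{lem on mapping properties of the composition operators} plus the algebra structure of $H^{1/2+\ep}(\T)$, and item (1) term-by-term through the explicit formula~\eqref{the remainder operator Q}, with the commutator pieces handled by Proposition~\ref{prop on Pplus commutators} and the $\bf{K}$ piece by Lemma~\ref{lem on analysis of the lower order bulk remainder}. Your choice of parameters $\be=\gam=(1+\al)/2$ in~\eqref{comu_3} for the $C^\al$ target is correct and slightly more explicit than the paper, which only gestures at ``Proposition~\ref{prop on Pplus commutators}'' together with the embedding $H^{1+\al/2}(\T)\emb C^\al(\T)$.

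One small imprecision: for the first target in~\eqref{me_feel}, saying the integral terms are ``controlled by $L^2$ norms'' does not quite work when $\del\le 1/2$, since then $\pd_1\psi\notin L^2(\T)$ in general. What you actually need (and what the paper singles out) is the bilinear bound
\[
\babs{\int_{\T} f\,\pd_1 g}\lesssim \tnorm{f}_{H^{1/2}(\T)}\tnorm{g}_{H^{1/2}(\T)},
\]
which follows by Cauchy--Schwarz in Fourier after writing $\int_\T f\,\pd_1 g = \sum_{\xi} 2\pi\ii\xi\,\hat f(-\xi)\hat g(\xi)$. With this correction the argument goes through as you describe.
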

\begin{proof}
    The second item follows from arguments similar to that of the proof of Lemma~\ref{lem on mapping properties of the composition operators}. We omit the repetitive details.

    The first item is a direct calculation on the expression for $\bf{Q}$ from equation~\eqref{the remainder operator Q} that uses Lemmas~\ref{lem on mapping properties of the composition operators} and~\ref{lem on analysis of the lower order bulk remainder}, Proposition~\ref{prop on Pplus commutators}, the embedding $H^{1+\alpha/2}(\T)\emb C^\alpha(\T) $, and finally bounds of the type
    \begin{equation}
        \babs{\int_{\T}f^1\pd_1f^2}\lesssim\tnorm{f^1}_{H^{1/2}\tp{\T}}\tnorm{f^2}_{H^{1/2}\tp{\T}}.
    \end{equation}
    We again elect to omit the routine verification.
\end{proof}

\section{Analysis of large solutions to the nonlinear operator equation}\label{section on analysis of the nonlinear operator}

Armed with the preliminary analysis of Section~\ref{subsection on preliminaries}, we can now begin the production of large solutions to the equation~\eqref{reformulation for the coordinate function psi}. The strategy is to reformulate the operator equation as a nonlinear compact perturbation of the identity and import Leray-Schauder degree theory manifesting as a global implicit function theorem. This reformulation is possible when we assume the satisfaction of one of the ellipticity conditions of~\eqref{conditions for the hidden ellipticity}. After this initial construction of the solutions, we then determine the weaker  norms in which they are large.

\subsection{Construction of the connected set of solutions}\label{subsection on the construction of the connected set of solutions}

Recall that in Proposition~\ref{prop on operator decomposition} we decomposed our main nonlinear operator $\bf{P}$ into the sum of a minimal principal part and a compact lower order remainder $\bf{Q}$. Our next result establishes some crucial properties for this principal part.

\begin{prop}[Properties of the principal part]\label{prop on properties of the principal part}
    Let $\al$, $\mathsf{f}$, $\upphi$, $\m{c}$, and $\m{g}$ be as in Lemma~\ref{lem on operator encoding} and assume one of the ellipticity conditions~\eqref{conditions for the hidden ellipticity} is satisfied. The following hold.
    \begin{enumerate}
        \item The operator valued function $\bf{A}:\R\times\z{C}^{\al}\tp{\T}\to\mathcal{L}\tp{\z{C}^{1,\al}\tp{\T};\z{C}^\al\tp{\T}}$, with action
        \begin{equation}\label{the_main_part_A}
            \bf{A}\tp{\upkappa,\psi}\Psi = \m{Re}\tsb{\P_+\tp{\tp{\mathsf{X}_{\upkappa,\psi} - \ii\mathsf{Y}_{\upkappa,\psi}}\P_+\pd_1\Psi} + \P_-\tp{\tp{\mathsf{X}_{\upkappa,\psi} + \ii\mathsf{Y}_{\upkappa,\psi}}\P_-\pd_1\Psi}}
        \end{equation}
        for all $\tp{\upkappa,\psi}\in\R\times\z{C}^\al\tp{\T}$ and $\Psi\in\z{C}^{1,\al}\tp{\T}$, is well-defined and maps bounded sets to bounded sets, and is continuously differentiable.
        \item The map $\bf{A}$ introduced in the previous item  takes values in the set of invertible operators and the family of inverses
        \begin{equation}
            \R\times\z{C}^\al\tp{\T}\ni\tp{\upkappa,\psi}\mapsto\tsb{\bf{A}\tp{\upkappa,\psi}}^{-1}\in\mathcal{L}\tp{\z{C}^\al\tp{\T};\z{C}^{1,\al}\tp{\T}}
        \end{equation}
        map bounded sets to bounded sets and are continuously differentiable.
        \item The map $\bf{A}$ satisfies for all $\tp{\upkappa,\psi}\in\R\times\z{C}^{1,\al}\tp{\T}$ the equality
        \begin{equation}
            \bf{P}\tp{\upkappa,\psi} = \bf{A}\tp{\upkappa,\psi}\psi + \bf{Q}\tp{\upkappa,\psi}
        \end{equation}
        where $\bf{Q}$ is defined in~\eqref{the remainder operator Q}; hence, for all such $\tp{\upkappa,\psi}$ we have that $\bf{P}\tp{\upkappa,\psi} = 0$ if and only if $\psi = -\tsb{\bf{A}\tp{\upkappa,\psi}}^{-1}\bf{Q}\tp{\upkappa,\psi}$.
    \end{enumerate}
\end{prop}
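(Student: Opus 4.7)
My plan is to exploit the hidden ellipticity \eqref{conditions for the hidden ellipticity} by casting each operator $\bf{A}(\upkappa,\psi)$ as a Toeplitz-type operator with nonvanishing scalar symbol
$a = a_{\upkappa,\psi} := \mathsf{X}_{\upkappa,\psi} - \ii\mathsf{Y}_{\upkappa,\psi}$
of winding number zero, which I will invert by a Szeg\H{o}-type factorization in $C^\al$. The key observation driving the argument is that the ellipticity conditions place $a$ uniformly in a fixed open half-plane of $\C$ bounded away from the origin: under the first condition, $\mathsf{Y}_{\upkappa,\psi} \ge \m{g} > 0$ places $a$ in the lower half-plane; under the second, the uniform sign of $\mathsf{X}_{\upkappa,\psi}$ places $a$ in a left or right half-plane. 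This persists uniformly on bounded subsets of $(\upkappa,\psi)$ and provides the leverage for both invertibility and for the required boundedness on bounded sets.

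Items (1) and (3) I expect to be essentially bookkeeping. For item (1), I would compose the $C^1$ maps $(\upkappa,\psi) \mapsto \mathsf{X}_{\upkappa,\psi} \pm \ii\mathsf{Y}_{\upkappa,\psi}$ into $C^\al(\T;\C)$, guaranteed by Proposition~\ref{prop on operator decomposition}, with the bounded linear operations $\pd_1: \z C^{1,\al} \to \z C^\al$, the projections $\P_\pm$ (bounded on $C^\al$ for $0<\al<1$ as variants of the Hilbert transform), pointwise multiplication in the Banach algebra $C^\al$, and the real part; the output lies in $\z C^\al$ because $\P_\pm$ annihilate the zero frequency. For item (3), identity~\eqref{decomposing_identity_stinky} from Proposition~\ref{prop on operator decomposition} gives $\bf{P}(\upkappa,\psi) = \bf{A}(\upkappa,\psi)\psi + \bf{Q}(\upkappa,\psi)$ by inspection, and the equivalence of $\bf{P}(\upkappa,\psi) = 0$ with $\psi = -\tsb{\bf{A}(\upkappa,\psi)}^{-1}\bf{Q}(\upkappa,\psi)$ is then immediate from item (2).

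The crux will be item (2). First I would reduce to a single scalar Toeplitz equation: using that $\pd_1\Psi$ is real (so $\P_-\pd_1\Psi = \overline{\P_+\pd_1\Psi}$) and $\overline{\mathsf{X}-\ii\mathsf{Y}}=\mathsf{X}+\ii\mathsf{Y}$, the two summands in \eqref{the_main_part_A} are complex conjugates, and hence $\bf{A}(\upkappa,\psi)\Psi = 2\,\m{Re}\,\P_+\!\bigl(a\,\P_+\pd_1\Psi\bigr)$. Given $g \in \z C^\al(\T)$ and setting $h = \P_+ g$, $\Theta = \P_+\pd_1\Psi$, the equation $\bf{A}(\upkappa,\psi)\Psi = g$ becomes the Toeplitz problem
\begin{equation*}
\P_+(a\,\Theta) = h, \qquad \Theta \in \P_+ C^\al(\T;\C).
\end{equation*}
Next I would construct a Szeg\H{o}-type factorization $a = a_0\,A_+\,A_-$ in $C^\al$: since $a$ is nonvanishing with winding number zero, a fixed branch of log on the ambient half-plane yields $\log a \in C^\al(\T;\C)$, which I then decompose in Fourier as $\log a = L_0 + L_+ + L_-$ with $L_0 \in \C$ and $L_\pm$ supported in $\pm\N^+$; the analytic exponential on the Banach algebra $C^\al$ then produces $A_\pm := e^{L_\pm}$ and $a_0 := e^{L_0}$, with $A_\pm$ and $A_\pm^{-1} = e^{-L_\pm}$ having Fourier support in $\pm\N$. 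A direct calculation, using only $A_-(A_-^{-1}h) = h$ together with the non-positive Fourier support of $A_-$ to show that $A_-\P_-(A_-^{-1}h)$ and $A_-\cdot\m{mean}(A_-^{-1}h)$ lie in non-positive frequencies, then verifies
\begin{equation*}
\Theta = a_0^{-1}\,A_+^{-1}\,\P_+\!\bigl(A_-^{-1}\,h\bigr)
\end{equation*}
is a solution in $\P_+ C^\al$; uniqueness is a parallel cascade using $A_+^{-1}$ first. Setting $\tsb{\bf{A}(\upkappa,\psi)}^{-1} g := \pd_1^{-1}(\Theta + \overline\Theta)$ then recovers $\Psi \in \z C^{1,\al}(\T)$ via the one-derivative gain of integration of a mean-zero function. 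For boundedness on bounded sets, I would observe that on any bounded subset of $(\upkappa,\psi)$ the symbol $a$ remains in a bounded subset of $C^\al$ with uniform modulus lower bound, yielding uniform $C^\al$ control on $\log a$, $L_\pm$, $A_\pm^{\pm 1}$, and $a_0^{\pm 1}$. Continuous differentiability of $(\upkappa,\psi) \mapsto \tsb{\bf{A}(\upkappa,\psi)}^{-1}$ follows because every step of the construction---choosing the branch of log, the Fourier projections onto $\P_\pm$, and the exponential on $C^\al$---is smooth on the relevant open set of admissible symbols. The main obstacle will be organizing this Szeg\H{o} step carefully enough to keep every bound uniform in $(\upkappa,\psi)$; this is the single point at which the hidden ellipticity is used, and it is what converts pointwise half-plane positivity of the symbol into operator-level invertibility in the $C^\al$ scale.
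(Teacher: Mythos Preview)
Your argument is correct and complete, but it takes a genuinely different route from the paper for item~(2). The paper establishes invertibility by a closed-range-plus-continuity scheme: it first obtains an $L^2$ estimate $\tnorm{\Psi}_{H^1}\lesssim\tnorm{\bf{A}(\upkappa,\psi)\Psi}_{L^2}$ by testing the equation against $(\P_+\pm\P_-)\pd_1\Psi$ and using the sign of $\mathsf{X}_{\upkappa,\psi}$ or $\mathsf{Y}_{\upkappa,\psi}$ granted by~\eqref{conditions for the hidden ellipticity}; then it upgrades this to $\tnorm{\Psi}_{C^{1,\al}}\lesssim\tnorm{\bf{A}(\upkappa,\psi)\Psi}_{C^\al}$ by a parametrix identity, the commutator bound~\eqref{comu_3}, and interpolation; finally it invokes the method of continuity from the explicitly diagonal operator $\bf{A}(0,0)$.

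Your Szeg\H{o}/Wiener--Hopf factorization instead gives a direct and explicit inverse. The half-plane observation is exactly what makes this work: it guarantees winding number zero, so $\log a\in C^\al$ globally and the outer factorization $a=a_0 A_+A_-$ goes through in the Banach algebra $C^\al$. What you gain is constructiveness---an explicit formula for $[\bf{A}(\upkappa,\psi)]^{-1}$ from which $C^1$ dependence and boundedness on bounded sets are immediate by composition of smooth maps---and you avoid the $L^2$ detour, the interpolation step, and the method of continuity entirely. What the paper's route offers in exchange is that it reuses the commutator machinery of Proposition~\ref{prop on Pplus commutators} already needed for the compactness of $\bf{Q}$ and for the later blow-up refinements, so no new ingredient (the factorization) is introduced; its a~priori estimate structure is also more directly portable to other function-space scales where an explicit Szeg\H{o} factorization might be less transparent.
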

\begin{proof}
    The first item is a direct computation and, assuming the second item is true, so too is the third item. Thus, we only focus on the proof of the second item.

    We begin with the proof of the following auxiliary claim. For each $1\le\lambda<\infty$ there exists $C_\lambda<\infty$ with the property that for all $\tp{\upkappa,\psi}\in\R\times\z{C}^{\al}\tp{\T}$ satisfying $|\upkappa| + \tnorm{\psi}_{C^\al\tp{\T}}\le\lambda$ and all $\Psi\in\z C^{1,\al}\tp{\T}$ we have the closed range estimate
    \begin{equation}\label{closed range estimates}
        \tnorm{\Psi}_{C^{1,\al}\tp{\T}}\le C_\lambda\tnorm{\bf{A}\tp{\upkappa,\psi}\Psi}_{C^\al\tp{\T}}.
    \end{equation}
    
    As a first step to showing estimate~\eqref{closed range estimates} to be true, we derive a Sobolev space variant. By orthogonality and reality considerations, we have the equalities
    \begin{equation}\label{equality_1}
        -\m{Re}\bsb{\int_{\T}\bf{A}\tp{\upkappa,\psi}\Psi\Bar{\tp{\P_+ + \P_-}\pd_1\Psi}} = -\int_{\T}\mathsf{X}_{\upkappa,\psi}\tp{\tabs{\P_+\pd_1\Psi}^2 + \tabs{\P_-\pd_1\Psi}^2}
    \end{equation}
    and
    \begin{equation}\label{equality_2}
        -\m{Im}\bsb{\int_{\T}\bf{A}\tp{\upkappa,\psi}\Psi\Bar{\tp{\P_+ - \P_-}\pd_1\Psi}} = \int_{\T}\mathsf{Y}_{\upkappa,\psi}\tp{\tabs{\P_+\pd_1\Psi}^2 + \tabs{\P_-\pd_1\Psi}^2}.
    \end{equation}
    The satisfaction of the ellipticity conditions~\eqref{conditions for the hidden ellipticity} means that either $\tabs{\mathsf{X}_{\upkappa,\psi}}\ge\tabs{\m{c}}>0$ or $\mathsf{Y}_{\upkappa,\psi}\ge\m{g}>0$. In either case, we can use the Cauchy-Schwarz inequality and identities~\eqref{equality_1} and~\eqref{equality_2} to deduce that the following weaker version of~\eqref{closed range estimates} holds
    \begin{equation}\label{easy closed range estimate}
        \tnorm{\Psi}_{H^1\tp{\T}}\le C_\lambda\tnorm{\bf{A}\tp{\upkappa,\psi}\Psi}_{L^2\tp{\T}}.
    \end{equation}

    We now shall use~\eqref{easy closed range estimate} with a parametrix and interpolation argument to boost up to the desired~\eqref{closed range estimates}. Since
    \begin{equation}
        \P_+\tp{\bf{A}\tp{\upkappa,\psi}\Psi} = \P_+\tp{\tp{\mathsf{X}_{\upkappa,\psi} - \ii\mathsf{Y}_{\upkappa,\psi}}\P_+\pd_1\Psi}
    \end{equation}
    we can introduce a commutator and multiply by $\tp{\mathsf{X}_{\upkappa,\psi} - \ii\mathsf{Y}_{\upkappa,\psi}}^{-1}$ to get
    \begin{equation}\label{I_E}
        \P_+\pd_1\Psi = \tp{\mathsf{X}_{\upkappa,\psi} - \ii\mathsf{Y}_{\upkappa,\psi}}^{-1}\tp{\P_+\tp{\bf{A}\tp{\upkappa,\psi}\Psi} + \tsb{\mathsf{X}_{\upkappa,\psi} - \ii\mathsf{Y}_{\upkappa,\psi},\P_+}\P_+\pd_1\Psi}.
    \end{equation}
    By taking the norm of~\eqref{I_E} in the space $C^\al\tp{\T}$ and using the third item of Proposition~\ref{prop on operator decomposition}, the commutator estimate~\eqref{comu_3} from Proposition~\ref{prop on Pplus commutators}, the boundedness of $\P_+$ on the H\"older spaces, and noting that $\Psi$ is $\R$-valued, we learn
    \begin{equation}\label{the intermediate estimate}
        \tnorm{\Psi}_{C^{1,\al}\tp{\T}}\lesssim_\lambda\tnorm{\bf{A}\tp{\upkappa,\psi}\Psi}_{C^\al\tp{\T}} + \tnorm{\Psi}_{C^1\tp{\T}}.
    \end{equation}

    We pair inequality~\eqref{the intermediate estimate} with the embedding $H^1\tp{\T}\emb C^{1/2}\tp{\T}$, the estimate~\eqref{easy closed range estimate}, and the interpolation bounds
    \begin{equation}
        \tnorm{\Psi}_{C^1\tp{\T}}\lesssim\tnorm{\Psi}_{C^{1/2}\tp{\T}}^{\f{2\al}{1 + 2\al}}\tnorm{\Psi}_{C^{1,\al}\tp{\T}}^{\f{1}{1+2\al}}
    \end{equation}
    to deduce
    \begin{equation}
        \tnorm{\Psi}_{C^{1,\al}\tp{\T}}\lesssim_\lambda\tnorm{\bf{A}\tp{\upkappa,\psi}\Psi}_{C^\al\tp{\T}} + \tp{\tnorm{\bf{A}\tp{\upkappa,\psi}\Psi}_{C^\al\tp{\T}}}^{\f{2\al}{1 + 2\al}}\tp{\tnorm{\Psi}_{C^{1,\al}\tp{\T}}}^{\f{1}{1 + 2\al}}.
    \end{equation}
    Estimate~\eqref{closed range estimates} now follows by Young's inequality and absorption.

    Upon having established the auxiliary claim, we realize that to complete the proof of the second item, it remains only to show that the pointwise inverse $\tsb{\bf{A}\tp{\upkappa,\psi}}^{-1}$ exists for all $\upkappa$ and $\psi$. Since we have the uniform closed ranges estimates~\eqref{closed range estimates}, the sough after invertibility will follow from the method of continuity, see, e.g., Theorem 4.51 in Abramovich and Aliprantis~\cite{MR1921782}, and the establishment of the existence of $\tsb{\bf{A}\tp{0,0}}^{-1}$. We compute explicitly that
    \begin{equation}
        \bf{A}\tp{0,0}\Psi = -\m{c}\pd_1\Psi - \ii\m{g}\tp{\P_+ - \P_-}\pd_1\Psi
    \end{equation}
    and so the inverse to $\bf{A}\tp{0,0}$ corresponds to the application of the Fourier multiplication operator with symbol
    \begin{equation}
        \Z\ni\xi\mapsto\mathds{1}_{\Z\setminus\tcb{0}}\tp{\xi}\tp{2\pi\m{g}|\xi| - 2\pi\ii\m{c}\xi}^{-1}\in\C.
    \end{equation}
    This inverse is well-defined since $\m{g}\m{c}\ne 0$ under \eqref{conditions for the hidden ellipticity}, and is bounded $\z{C}^\al\tp{\T}\to\z{C}^{1,\al}\tp{\T}$ as claimed.
\end{proof}

The third item of Proposition~\ref{prop on properties of the principal part} shows that solving the equation $\bf{P}\tp{\upkappa,\psi} = 0$ is equivalent to finding the zero set of a compact perturbation of the identity. This latter formulation is particularly convenient for initializing our construction and analysis of large solutions, as it allows us to import Leray-Schauder degree theory. The subsequent result explores this direction.

\begin{prop}[Existence of a global connected set of solutions]\label{prop on existence of a global connected set of solutions}
    Assume the hypotheses of Proposition~\ref{prop on properties of the principal part}. There exists a connected solution set $\mathcal{C}\subset\R\times\z{C}^{1,\al}\tp{\T}$ satisfying the following properties.
    \begin{enumerate}
        \item We have $\tp{0,0}\in\mathcal{C}$.
        \item For all $\tp{\upkappa,\psi}\in\mathcal{C}$ we have $\bf{P}\tp{\upkappa,\psi} = 0$.
        \item The function
        \begin{equation}\label{b_u_1}
            \mathcal{C}\ni\tp{\upkappa,\psi}\mapsto\tabs{\upkappa} + \tnorm{\psi}_{C^{1,\al}\tp{\T}}\in\R
        \end{equation}
        is unbounded.
    \end{enumerate}
\end{prop}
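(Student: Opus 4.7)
The plan is to invoke the global implicit function theorem (GIFT) of Theorem 4.6 in~\cite{NGCD} to a reformulation of the equation $\bf{P}(\upkappa,\psi)=0$ as a nonlinear compact perturbation of the identity. The key reformulation, made possible by item (2) of Proposition~\ref{prop on properties of the principal part}, is to define
\begin{equation}
    \bf{F}:\R\times\z{C}^{1,\al}(\T)\to\z{C}^{1,\al}(\T),\quad \bf{F}(\upkappa,\psi)=\psi+\tsb{\bf{A}(\upkappa,\psi)}^{-1}\bf{Q}(\upkappa,\psi),
\end{equation}
so that by item (3) of that proposition, the zero sets of $\bf{F}$ and $\bf{P}$ coincide. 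Composing the smoothness and boundedness statements from Propositions~\ref{prop on operator decomposition} and~\ref{prop on properties of the principal part}, $\bf{F}$ is continuously Fr\'echet differentiable; moreover, since $\bf{Q}:\R\times\z{C}^{1,\al}(\T)\to\z{C}^\al(\T)$ is compact and $\tsb{\bf{A}(\upkappa,\psi)}^{-1}:\z{C}^\al(\T)\to\z{C}^{1,\al}(\T)$ is continuous, the map $\bf{F}-I$ is a continuous compact nonlinear operator. A direct inspection of the formula~\eqref{the remainder operator Q} yields $\bf{Q}(0,0)=0$, hence $\bf{F}(0,0)=0$.

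The next and most delicate step is to verify that $D_2\bf{F}(0,0):\z{C}^{1,\al}(\T)\to\z{C}^{1,\al}(\T)$ is an isomorphism. Setting $\upkappa=0$ in~\eqref{the remainder operator Q} kills all terms with an $\upkappa^2$ prefactor and leaves $\bf{Q}(0,\psi)=\m{g}\mathbb{S}^{\downarrow}\psi$, a linear smoothing operator. Therefore
\begin{equation}
    D_2\bf{F}(0,0)\Psi=\Psi+\tsb{\bf{A}(0,0)}^{-1}\bigl(\m{g}\mathbb{S}^{\downarrow}\Psi\bigr)=I+K,
\end{equation}
where $K$ is compact on $\z{C}^{1,\al}(\T)$. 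By Fredholm theory it suffices to check injectivity: if $(I+K)\Psi=0$ then $\bf{A}(0,0)\Psi+\m{g}\mathbb{S}^{\downarrow}\Psi=0$, which together with the decomposition~\eqref{multiplier_decomposition_0} of $G$ reads $G(\m{g}\Psi)-\m{c}\pd_1\Psi=0$. On the Fourier side this becomes $\bigl(2\pi\m{g}|\xi|\tanh(2\pi h|\xi|)-2\pi\ii\m{c}\xi\bigr)\widehat{\Psi}(\xi)=0$ for all $\xi\in\Z$. Under either alternative of~\eqref{conditions for the hidden ellipticity}, the bracket has nonzero real part (when $\m{g}>0$) or nonzero imaginary part (when $\m{c}\ne 0$) for every $\xi\in\Z\setminus\tcb{0}$; combined with the mean-zero condition this forces $\Psi=0$.

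With these ingredients in place, Theorem 4.6 of~\cite{NGCD} produces a connected set $\mathcal{C}\subset\tcb{\bf{F}=0}$ containing $(0,0)$ which satisfies one of the following two alternatives: either the continuous function
\begin{equation}
    \mathcal{C}\ni(\upkappa,\psi)\mapsto |\upkappa|+\tnorm{\psi}_{C^{1,\al}(\T)}\in\R
\end{equation}
is unbounded, or else $\mathcal{C}\setminus\tcb{(0,0)}$ is connected (the ``closed loop'' alternative). To finish, I will rule out the closed loop. In that alternative one can find a sequence $(0,\psi_n)\in\mathcal{C}\setminus\tcb{(0,0)}$ converging to $(0,0)$; each such $\psi_n$ solves $\bf{P}(0,\psi_n)=0$. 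But for $\upkappa=0$ the equation $\bf{P}(0,\psi)=0$ reads $G(\m{g}\psi)-\m{c}\pd_1\psi=0$, and the Fourier argument above shows that its only mean-zero solution is $\psi=0$. This forces $\psi_n=0$, contradicting the assumption that $(0,\psi_n)\ne(0,0)$. Hence the unbounded alternative must hold, establishing item (3).

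The main obstacle I anticipate is the verification of injectivity of $D_2\bf{F}(0,0)$ under the two disjoint ellipticity regimes in~\eqref{conditions for the hidden ellipticity}; handling these simultaneously on the Fourier side is the linchpin of both the isomorphism step and the exclusion of the closed loop. All other verifications (compactness of $\bf{F}-I$, smoothness, and the formal reformulation) are essentially bookkeeping on top of the estimates already proven in Section~\ref{subsection on preliminaries} and Section~\ref{subsection on instantiation of the nonlinear operator and decompositions}.
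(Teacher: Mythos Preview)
Your proof follows the same approach as the paper: define $\bf{F}(\upkappa,\psi)=\psi+[\bf{A}(\upkappa,\psi)]^{-1}\bf{Q}(\upkappa,\psi)$, verify compactness of $\bf{F}-I$ and that $D_2\bf{F}(0,0)$ is an isomorphism via its Fourier symbol, apply the GIFT, and exclude the closed loop using the fact that $\bf{P}(0,\psi)=0$ forces $\psi=0$. One point to tighten in the loop exclusion: connectedness of $\mathcal{C}\setminus\{(0,0)\}$ does not by itself produce a sequence $(0,\psi_n)$ with $\psi_n\neq 0$ converging to $(0,0)$; the cleaner route (the paper's) is to note that your Fourier computation already gives the inclusion $\mathcal{C}\setminus\{(0,0)\}\subset(\R\setminus\{0\})\times\z{C}^{1,\al}(\T)$, and since the local implicit-function curve through $(0,0)$ contains points with $\upkappa>0$ and with $\upkappa<0$, this inclusion splits $\mathcal{C}\setminus\{(0,0)\}$ into two nonempty relatively open pieces, hence it is disconnected.
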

\begin{proof}
    We use the global implicit function theorem for compact perturbations of the identity based on Leray-Schauder degree theory as formulated in Theorem 4.6 in Nguyen~\cite{NGCD}; note that this is a special case of Theorem II.6.1 in Kielh\"{o}fer~\cite{MR2859263}.

    The map $\bf{F}:\R\times\z{C}^{1,\al}\tp{\R}\to\z{C}^{1,\al}\tp{\R}$ defined via
    \begin{equation}\label{the_map_is_a_gift}
        \bf{F}\tp{\upkappa,\psi} = \psi + \tsb{\bf{A}\tp{\upkappa,\psi}}^{-1}\bf{Q}\tp{\upkappa,\psi},\;\tp{\upkappa,\psi}\in\R\times\z{C}^{1,\al}\tp{\T},
    \end{equation}
    with $\bf{A}$ and $\bf{Q}$ as in equations~\eqref{the_main_part_A} and~\eqref{the remainder operator Q}, respectively, is well-defined and class $C^1$ in the Fr\'{e}chet sense as a consequence of Propositions~\ref{prop on operator decomposition} and~\ref{prop on properties of the principal part}. The third item of Proposition~\ref{prop on properties of the principal part} implies that the zero sets of $\bf{F}$ and $\bf{P}$ coincide. Moreover, the difference between $\bf{F}$ and the identity is compact, as it can be factored into the composition of the mapping 
    \begin{equation}
        \tp{\R\times\z{C}^\al\tp{\T}}\times\z{C}^\al\tp{\T}\ni\tp{\tp{\upkappa,\psi},\phi}\mapsto\tsb{\bf{A}\tp{\upkappa,\psi}}^{-1}\phi\in\z{C}^{1,\al}\tp{\T},
    \end{equation}
    that is bounded on bounded sets, and the compact mapping
    \begin{equation}
        \R\times\z{C}^{1,\al}\tp{\T}\ni\tp{\upkappa,\psi}\mapsto\tp{\tp{\upkappa,\psi},\bf{Q}\tp{\upkappa,\psi}}\in\tp{\R\times\z{C}^{\al}\tp{\T}}\times\z{C}^\al\tp{\T}.
    \end{equation}
    
    We also observe that the partial derivative of $\bf{F}$ at the trivial solution is an isomorphism. Indeed, a direct computation unveils the identity
    \begin{equation}\label{CHAIT}
        D_2\bf{F}\tp{0,0}\Psi = \Psi + \tsb{\bf{A}\tp{0,0}}^{-1}D_2\bf{Q}\tp{0,0}\Psi
    \end{equation}
    and hence, by taking Fourier transforms,
    \begin{equation}\label{TCHAI}
        \mathscr{F}\tsb{D_2\bf{F}\tp{0,0}\Psi}\tp{\xi} = \upmu(\xi)\mathscr{F}[\Psi]\tp{\xi}\text{ for }\upmu(\xi) = \mathds{1}_{\Z\setminus\tcb{0}}\tp{\xi}\bp{1 + \f{2\pi\m{g}|\xi|\tp{\tanh\tp{2\pi|\xi|h} - 1}}{2\pi\m{g}|\xi| - 2\pi\ii\m{c}\xi}},\;\xi\in\Z.
    \end{equation}
    The symbol $\upmu$ in~\eqref{TCHAI} is bounded above and below in modulus by positive constants (uniformly for $\xi\neq0$) thanks to the satisfaction of condition~\eqref{conditions for the hidden ellipticity}; hence, $\m{ker}D_2\bf{F}(0,0) = \tcb{0}$. Since $D_2F(0,0)$ is Fredholm with vanishing index, being a compact perturbation of the identity, we conclude that $D_2\bf{F}\tp{0,0}$ is invertible.

    The hypotheses of the global implicit function theorem as formulated in Theorem 4.6 in~\cite{NGCD} are thus satisfied by the map $\bf{F}$~\eqref{the_map_is_a_gift}. Therefore, if we let 
    \begin{equation}
        \mathcal{S} = \m{cl}\tcb{\tp{\upkappa,\psi}\in\R\times\z{C}^{1,\al}\tp{\T}\;:\;\bf{F}\tp{\upkappa,\psi} = 0}
    \end{equation}
    denote the closure of the solution set $\tcb{\bf{F} = 0}$ and let $\mathcal{C}\subseteq\mathcal{S}$ denote the connected component of $\mathcal{S}$ that contains $\tp{0,0}$, then either $\mathcal{C}$ is unbounded or $\mathcal{C}\setminus\tcb{\tp{0,0}}$ is connected.

    The proof is complete as soon as we show that $\mathcal{C}\setminus\tcb{\tp{0,0}}$ is disconnected. In fact, we claim that
    \begin{equation}\label{the disconnected claim}
        \mathcal{C}\setminus\tcb{\tp{0,0}}\subset\tp{\R\setminus\tcb{0}}\times\z{C}^{1,\al}\tp{\T},
    \end{equation}
    which, in particular, shows that $\mathcal{C}\setminus\tcb{\tp{0,0}}$ is disconnected. We prove the contrapositive of~\eqref{the disconnected claim}. Suppose that $\tp{\upkappa,\psi}\in\tp{\tcb{0}\times\z{C}^{1,\al}\tp{\T}}\cap\mathcal{C}$ so that $\bf{F}\tp{0,\psi} = 0$. Then, $\psi + \tsb{\bf{A}\tp{0,0}}^{-1}\bf{Q}\tp{0,0}\psi = 0$. In turn, for $\upmu$ as in~\eqref{TCHAI},
    \begin{equation}
        \forall\;\xi\in\Z,\;\upmu\tp{\xi}\mathscr{F}\tsb{\psi}\tp{\xi} = 0\text{ and hence }\psi = 0.
    \end{equation}
    So indeed the claimed inclusion~\eqref{the disconnected claim} holds.
\end{proof}

\subsection{Improvements on the blow-up quantity}\label{subsection on blow up refinements}

In Proposition~\ref{prop on existence of a global connected set of solutions} we have constructed a connected set of solutions $\mathcal{C}$ to the equation $\bf{P} = 0$, where $\bf{P}$ is the operator from~\eqref{formal_operator_encoding}, on which the function~\eqref{b_u_1} is unbounded. The goal of this subsection is to sharpen as much as we can the quantity that is blowing up by showing that norms of much lower regularity are actually unboundedly large on the set $\mathcal{C}$.

\begin{prop}[Blow-up refinement, I]\label{prop on blow up refinement I}
    Assume the hypotheses of Proposition~\ref{prop on existence of a global connected set of solutions} and let $\mathcal{C}$ be the connected set of solutions constructed therein. For every $0<\ep\le1/2$ the function 
    \begin{equation}\label{the_blow_up_1}
        \mathcal{C}\ni\tp{\upkappa,\psi}\mapsto\tabs{\upkappa} + \tnorm{\psi}_{H^{1/2 + \ep}\tp{\T}}\in\R
    \end{equation}
    is unbounded.
\end{prop}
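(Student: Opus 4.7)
The strategy is to argue by contradiction against Proposition~\ref{prop on existence of a global connected set of solutions}, which provides unboundedness of $(\upkappa,\psi)\mapsto|\upkappa|+\|\psi\|_{C^{1,\al}(\T)}$ along $\mathcal{C}$. I suppose, toward a contradiction, that there is an $M<\infty$ so that $|\upkappa|+\|\psi\|_{H^{1/2+\ep}(\T)}\le M$ for every $(\upkappa,\psi)\in\mathcal{C}$, and then establish an a priori estimate of $\|\psi\|_{C^{1,\al}(\T)}$ in terms of $M$ (and the fixed data $\mathsf{f},\upphi,\m{c},\m{g}$).

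The heart of the argument is a bootstrap. Starting from the equation $\bf{A}(\upkappa,\psi)\psi=-\bf{Q}(\upkappa,\psi)$ furnished by item 3 of Proposition~\ref{prop on properties of the principal part}, I apply $\P_+$ and $\P_-$ to both sides, exploit $\P_\pm^2=\P_\pm$, $\P_+\overline{\P_+ F}=0$, $\P_-\overline{\P_- F}=0$, and divide by the nowhere-vanishing factor $\mathsf{X}_{\upkappa,\psi}\mp\ii\mathsf{Y}_{\upkappa,\psi}$ (invoking the ellipticity hypothesis via item 3 of Proposition~\ref{prop on operator decomposition}), producing
\begin{equation*}
\P_\pm\pd_1\psi=-(\mathsf{X}_{\upkappa,\psi}\mp\ii\mathsf{Y}_{\upkappa,\psi})^{-1}\Bigl(\P_\pm\bf{Q}(\upkappa,\psi)+[\P_\pm,\mathsf{X}_{\upkappa,\psi}\mp\ii\mathsf{Y}_{\upkappa,\psi}]\P_\pm\pd_1\psi\Bigr).
\end{equation*}
I measure the right-hand side in the norm of $W^{\ep',1/(1-\ep')}(\T)$ for a parameter $\ep'\in(\ep_k/2,\min\{\ep_k,1/2\})$, where $\psi$ is presently bounded in $\z{H}^{1/2+\ep_k}(\T)$ (with $\ep_0=\ep$).

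The factor $(\mathsf{X}_{\upkappa,\psi}\mp\ii\mathsf{Y}_{\upkappa,\psi})^{-1}$ lies in $H^{1/2+\min\{\ep_k,1/2\}}(\T)$ by item 2 of Proposition~\ref{proposition on more precise operator decomposition estimates} and so acts as a multiplier on $W^{\ep',1/(1-\ep')}(\T)$; the term $\P_\pm\bf{Q}(\upkappa,\psi)$ is bounded in $W^{\ep'',1/(1-\ep'')}(\T)\hookrightarrow W^{\ep',1/(1-\ep')}(\T)$ (for $\ep''\in(\ep',\ep_k)$) by item 1 of the same proposition; and the commutator is controlled by~\eqref{comu_1} with $\del=\min\{\ep_k,1/2\}$, yielding a bound proportional to $\|\mathsf{X}_{\upkappa,\psi}\mp\ii\mathsf{Y}_{\upkappa,\psi}\|_{H^{1/2+\del}}\|\psi\|_{H^{1/2+\ep'}}$. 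The one-dimensional Sobolev embedding $W^{\ep',1/(1-\ep')}(\T)\hookrightarrow H^{2\ep'-1/2}(\T)$ (valid since $\ep'\le 1/2$) then delivers $\pd_1\psi$ bounded in $H^{2\ep'-1/2}(\T)$, hence $\psi$ bounded in $\z{H}^{1/2+2\ep'}(\T)$. Since $2\ep'>\ep_k$, this is a strict gain, and finitely many iterations bring $\psi$ into a bounded subset of $\z{H}^{1+\al/2}(\T)$.

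Once this threshold is reached, item 1 of Proposition~\ref{proposition on more precise operator decomposition estimates} yields $\bf{Q}(\upkappa,\psi)\in C^\al(\T)$ bounded, and then item 2 of Proposition~\ref{prop on properties of the principal part} shows $\psi=-[\bf{A}(\upkappa,\psi)]^{-1}\bf{Q}(\upkappa,\psi)$ is bounded in $C^{1,\al}(\T)$, contradicting Proposition~\ref{prop on existence of a global connected set of solutions}. The principal obstacle is the careful bookkeeping of indices across the iteration: at each stage one must reconcile the commutator requirement $\ep'<\del$ with the embedding cap $\ep'\le 1/2$ and the gain condition $\ep'>\ep_k/2$, and verify that the elliptic coefficient $(\mathsf{X}_{\upkappa,\psi}\mp\ii\mathsf{Y}_{\upkappa,\psi})^{-1}$ retains the Sobolev regularity dictated by the currently available bound on $\psi$.
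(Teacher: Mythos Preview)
Your proposal is correct and follows essentially the same approach as the paper: both derive the parametrix identity for $\P_\pm\pd_1\psi$, estimate it in $W^{\ep',1/(1-\ep')}$ using Proposition~\ref{proposition on more precise operator decomposition estimates} and the commutator bound~\eqref{comu_1}, and bootstrap via the embedding into $H^{1/2+2\ep'}$ until reaching $H^{1+\al/2}$. The only minor difference is the final step, where the paper re-applies the identity in the $C^\al$ norm via~\eqref{comu_3}, while you invoke the boundedness of $[\bf{A}(\upkappa,\psi)]^{-1}$ from item~2 of Proposition~\ref{prop on properties of the principal part} directly (using $H^{1+\al/2}\hookrightarrow C^\al$); both routes are valid.
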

\begin{proof}
    We begin by proving an auxiliary estimate. We claim that for all $0<\be<\del\le1/2$ and $M\in\R^+$ there exists $N\in\R^+$, with the property that for all $\tp{\upkappa,\psi}\in\R\times\z{C}^{1,\al}\tp{\T}$ satisfying $\bf{P}\tp{\upkappa,\psi} = 0$ and
    \begin{equation}\label{__if__}
        |\upkappa| + \tnorm{\psi}_{H^{1/2 + \del}\tp{\T}}\le M
    \end{equation}
    we have that
    \begin{equation}\label{_then_}
        |\upkappa| + \tnorm{\psi}_{H^{1/2 + 2\be}\tp{\T}}\le N.
    \end{equation}

    So let us assume that $\tp{\upkappa,\psi}\in\R\times\z{C}^{1,\al}\tp{\T}$ satisfy~\eqref{__if__} and $\bf{P}\tp{\upkappa,\psi} = 0$. We use identity~\eqref{I_E} with $\Psi = \psi$ and $\bf{A}\tp{\kappa,\psi}\psi = -\bf{Q}\tp{\upkappa,\psi}$ to see that
    \begin{equation}\label{identity_that_we_use}
        \P_+\pd_1\psi = \tp{\mathsf{X}_{\upkappa,\psi} - \ii\mathsf{Y}_{\upkappa,\psi}}^{-1}\tp{[\mathsf{X}_{\upkappa,\psi} - \ii\mathsf{Y}_{\upkappa,\psi},\P_+]\pd_1\P_+\psi - \P_+\bf{Q}\tp{\upkappa,\psi}}.
    \end{equation}
    The next step is to take the norm of identity~\eqref{identity_that_we_use} in the space $W^{\be,1/(1-\be)}\tp{\T;\C}$. Thanks to the reality of $\psi$ and to the continuity of the embedding and product maps
    \begin{equation}
        W^{1+\be,1/(1-\be)}\tp{\T;\C}\emb H^{1/2+2\be}\tp{\T;\C}\text{ and }H^{1/2 + \be}\tp{\T;\C}\times W^{\be,1/\tp{1-\be}}\tp{\T;\C}\to W^{\be,1/(1-\be)}\tp{\T;\C}
    \end{equation}
    we get
    \begin{multline}
        \tnorm{\psi}_{H^{1/2 + 2\be}\tp{\T}}\lesssim\tnorm{\P_+\pd_1\psi}_{W^{1+\be,1/(1-\be)}\tp{\T}}\\\lesssim\tnorm{\tp{\mathsf{X}_{\upkappa,\psi} - \ii\mathsf{Y}_{\upkappa,\psi}}^{-1}}_{H^{1/2 + \be}\tp{\T}}\tp{\tnorm{[\mathsf{X}_{\upkappa,\psi} - \ii\mathsf{Y}_{\upkappa,\psi},\P_+]\pd_1\P_+\Psi}_{W^{\be,1/(1-\be)}\tp{\T}} + \tnorm{\bf{Q}\tp{\upkappa,\psi}}_{W^{\be,1/(1-\be)}\tp{\T}}}.
    \end{multline}
    Now we use the assumed bound~\eqref{__if__} along with the first and second items of Proposition~\ref{proposition on more precise operator decomposition estimates} and the commutator estimate~\eqref{comu_1} from Proposition~\ref{prop on Pplus commutators} to deduce that the desired estimate~\eqref{_then_} indeed holds with some $N$ depending only on $M$, $\be$, and $\del$.

    Next, we shall prove that the function~\eqref{the_blow_up_1} is unbounded. Let us assume, for the sake of reaching a contradiction, that there exists $0<\ep\le1/2$ such that
    \begin{equation}\label{contra_hypo}
        \sup_{\tp{\upkappa,\psi}\in\mathcal{C}}\tp{|\upkappa| + \tnorm{\psi}_{H^{1/2 + \ep}\tp{\T}}}<\infty.
    \end{equation}
    By repeated application of the auxiliary estimate established in the first part of the proof, we deduce that for all $0\le s<3/2$
    \begin{equation}\label{intermediate_contradiction_step}
        \sup_{\tp{\upkappa,\psi}\in\mathcal{C}}\tp{|\upkappa| + \tnorm{\psi}_{H^{s}\tp{\T}}}<\infty.
    \end{equation}
    
    We then return to identity~\eqref{identity_that_we_use}, which continues to hold for $\tp{\upkappa,\psi}\in\mathcal{C}$. We now take the norm in $C^\al\tp{\T;\C}$. After using estimates~\eqref{you_make}, \eqref{comu_3}, and~\eqref{me_feel} along with~\eqref{intermediate_contradiction_step} for $s = 1+\al/2$, we find that
    \begin{equation}
        \sup_{\tp{\upkappa,\psi}\in\mathcal{C}}\tp{\tabs{\upkappa} + \tnorm{\psi}_{C^{1,\al}\tp{\T}}}<\infty,
    \end{equation}
    which is a contradiction of the unboundedness of the map~\eqref{b_u_1} from Proposition~\ref{prop on existence of a global connected set of solutions}.
\end{proof}

We now argue that within a restricted class of forcing data profiles, the blow-up quantity can be dropped even lower.

\begin{prop}[Blow-up refinement, II]\label{prop on blow up refinement II}
    Assume the hypotheses of Proposition~\ref{prop on existence of a global connected set of solutions} and let $\mathcal{C}$ be the connected set of solutions constructed therein and suppose additionally that $\mathsf{f} = 0$. For every $0<\ep\le\min\tcb{2\al,1/2}$ the function
    \begin{equation}\label{the_blow_up_2}
        \mathcal{C}\ni\tp{\upkappa,\psi}\mapsto|\upkappa| + \tnorm{\psi}_{C^\ep\tp{\T}}\in\R
    \end{equation}
    is unbounded.
\end{prop}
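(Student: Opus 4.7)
The plan is to mirror the strategy of Proposition~\ref{prop on blow up refinement I}, but to carry out the bootstrap in H\"older/Besov scales rather than Sobolev scales, exploiting the simplifications to $\bf{Q}$ that arise when $\mathsf{f} = 0$. I will argue by contradiction, supposing that there exist $\ep \in (0, \min\{2\al, 1/2\}]$ and $M \in \R^+$ such that $|\upkappa| + \tnorm{\psi}_{C^\ep(\T)} \le M$ for every $(\upkappa, \psi) \in \mathcal{C}$. The goal becomes to upgrade this into a uniform bound $\tnorm{\psi}_{H^{1/2 + \delta}(\T)} \le C(M)$ for some $\delta > 0$, contradicting Proposition~\ref{prop on blow up refinement I}.

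The key mechanism will be identity~\eqref{identity_that_we_use}, whose $C^\ep$ norm I will estimate term by term. The prefactor $(\mathsf{X}_{\upkappa,\psi} - \ii\mathsf{Y}_{\upkappa,\psi})^{-1}$ is uniformly bounded by Proposition~\ref{prop on operator decomposition}(3), and the commutator $[\mathsf{X}_{\upkappa,\psi} - \ii\mathsf{Y}_{\upkappa,\psi}, \P_+]\pd_1\P_+\psi$ will be handled via the second estimate of~\eqref{comu_3}, namely $\tnorm{[\P_+, f]\pd_1 g}_{B^{\ep,\infty}_\infty(\T)} \lesssim \tnorm{f}_{B^{\ep,\infty}_\infty(\T)}\tnorm{g}_{C^1(\T)}$, with $f = \mathsf{X}_{\upkappa,\psi} - \ii\mathsf{Y}_{\upkappa,\psi}$ and $g = \P_+\psi$. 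The hypothesis $\mathsf{f} = 0$ enters decisively here: inspection of~\eqref{the remainder operator Q} reveals that the lower-order bulk remainder $\upkappa^2\bf{K}(\psi)$ disappears, and every surviving contribution to $\bf{Q}(\upkappa,\psi)$ is either a commutator of the same structure, a smoothing term (from $\mathbb{S}^{\downarrow}$ or $\mathbb{S}^{\uparrow}$), or a projection $\P_{\pm}$ of a product already sitting in $C^\ep$, each obeying an estimate of the form $C(M)(1 + \tnorm{\psi}_{C^1(\T)})$ in the $C^\ep$ norm. Synthesizing these ingredients will yield the key inequality $\tnorm{\psi}_{C^{1+\ep}(\T)} \le C(M)(1 + \tnorm{\psi}_{C^1(\T)})$.

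The bootstrap will then close by interpolation: from $\tnorm{\psi}_{C^1(\T)} \le \tnorm{\psi}_{C^\ep(\T)}^\ep\tnorm{\psi}_{C^{1+\ep}(\T)}^{1-\ep} \le M^\ep\tnorm{\psi}_{C^{1+\ep}(\T)}^{1-\ep}$ and Young's inequality, the high norm will absorb into the left side to give $\tnorm{\psi}_{C^{1+\ep}(\T)} \le C'(M)$. The Sobolev embedding $C^{1+\ep}(\T) \hookrightarrow H^{1/2 + \ep/2}(\T)$ then provides the sought contradiction with Proposition~\ref{prop on blow up refinement I} since $\ep/2 \in (0, 1/2]$. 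The principal obstacle I anticipate is that Young's absorption step presumes a priori finiteness of $\tnorm{\psi}_{C^{1+\ep}(\T)}$, which is immediate from the a priori $C^{1,\al}$ regularity of members of $\mathcal{C}$ only in the sub-range $\ep \le \al$. To extend to $\al < \ep \le 2\al$, I will need a preliminary elliptic regularity bootstrap: since $\upphi \in C^4$ and the operator $\bf{A}(\upkappa,\psi)$ from~\eqref{the_main_part_A} is invertible between successively higher H\"older scales under the ellipticity condition~\eqref{conditions for the hidden ellipticity}, the identity $\bf{A}(\upkappa,\psi)\psi = -\bf{Q}(\upkappa,\psi)$ allows one to upgrade $\psi \in C^{1,\al}$ to $\psi \in C^{2,\al}$ (possibly with a non-uniform bound), which ensures the finiteness of $\tnorm{\psi}_{C^{1+\ep}(\T)}$ for every admissible $\ep$. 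This is precisely where the restriction $\ep \le 2\al$ will enter.
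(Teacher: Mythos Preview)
Your approach is essentially the paper's strategy, but you choose a less efficient target exponent that forces you into avoidable complications. The paper does not take the $C^\ep$ norm of identity~\eqref{identity_that_we_use}; it takes the $C^{\ep/2}$ norm, invoking the \emph{first} case of~\eqref{comu_3} with $\beta = 1-\ep/2<1$ and $\gamma = \ep$. This yields
\[
\tnorm{\psi}_{C^{1,\ep/2}(\T)}\lesssim 1+\tnorm{\psi}_{C^{1-\ep/2}(\T)},
\]
after which the interpolation
\[
\tnorm{\psi}_{C^{1-\ep/2}}\lesssim\tnorm{\psi}_{C^{\ep}}^{2\ep/(2-\ep)}\tnorm{\psi}_{C^{1,\ep/2}}^{(2-3\ep)/(2-\ep)}
\]
closes by absorption. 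The point is that the a priori finiteness needed for absorption is that of $\tnorm{\psi}_{C^{1,\ep/2}}$, which holds precisely when $\ep/2\le\al$, i.e.\ $\ep\le 2\al$---matching the stated range with no extra bootstrap.

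Your route via the second case of~\eqref{comu_3} produces $\tnorm{\psi}_{C^{1+\ep}}\lesssim 1+\tnorm{\psi}_{C^1}$ and then must (i) interpolate through the integer index $1$, where the Besov scale gives only the Zygmund class $B^{1,\infty}_\infty\supsetneq C^1$, so your stated inequality $\tnorm{\psi}_{C^1}\le\tnorm{\psi}_{C^\ep}^{\ep}\tnorm{\psi}_{C^{1+\ep}}^{1-\ep}$ needs the detour $C^{1}\supset C^{1,\ep'}$ for some $0<\ep'<\ep$; and (ii) for $\al<\ep\le 2\al$ invoke a separate regularity upgrade $\psi\in C^{2,\al}$ to justify the absorption. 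Both fixes are plausible but neither is free: in particular (ii) requires extending the invertibility of $\bf{A}(\upkappa,\psi)$ to the scale $C^{2,\al}\to C^{1,\al}$, which is not established in the paper. The paper's half-exponent trick sidesteps both issues in one stroke.
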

\begin{proof}
    Let us argue by contradiction, and suppose that there exists $0<\ep\le1/2$ such that
    \begin{equation}\label{the_contra_hyp}
        \sup_{\tp{\upkappa,\psi}\in\mathcal{C}}\tp{\tabs{\upkappa} + \tnorm{\psi}_{C^\ep\tp{\T}}}<\infty.
    \end{equation}
    Let $\tp{\upkappa,\psi}\in\mathcal{C}$. Since $\mathsf{f} = 0$, the expression for $\bf{Q}$ from~\eqref{the remainder operator Q} simplifies considerably:
    \begin{multline}
        \bf{Q}\tp{\upkappa,\psi} = \upkappa^2[\mathbb{P}_-,\tp{\pd_1\upphi + \ii\pd_2\upphi}\circ R_{\psi}]\pd_1\mathbb{P}_+\psi + \upkappa^2[\mathbb{P}_+,\tp{\pd_1\upphi - \ii\pd_2\upphi}\circ R_{\psi}]\pd_1\mathbb{P}_-\psi \\ + \kappa^2\mathbb{S}^{\downarrow}\tp{\upphi\circ R_{\psi}} + \ii\upkappa^2\tp{\mathbb{P}_- - \mathbb{P}_+}\tp{\tp{\pd_1\upphi\circ R_{\psi}}\tp{1 + \mathbb{S}^\uparrow\psi}} + \m{g}\mathbb{S}^{\downarrow}\psi.
    \end{multline}
    Hence, by using the commutator estimate~\eqref{comu_3} from Proposition~\ref{prop on Pplus commutators} (with $\be = 1-\ep/2$ and $\gam = \ep$) along with the uniform bounds~\eqref{the_contra_hyp} and Lemma~\ref{lem on mapping properties of the composition operators}, we find that
    \begin{equation}\label{good estimate on the Q dude}
        \sup_{\tp{\upkappa,\psi}\in\mathcal{C}}\f{\tnorm{\bf{Q}\tp{\upkappa,\psi}}_{C^{\ep/2}\tp{\T}}}{1 + \tnorm{\psi}_{C^{1-\ep/2}\tp{\T}}}<\infty.
    \end{equation}

    We now return to identity~\eqref{identity_that_we_use} for $\tp{\upkappa,\psi}\in\mathcal{C}$. By taking the norm in $C^{\ep/2}\tp{\T}$ and using again~\eqref{comu_3}, Lemma~\ref{lem on mapping properties of the composition operators}, \eqref{the_contra_hyp}, and~\eqref{good estimate on the Q dude} we get
    \begin{equation}
        \tnorm{\psi}_{C^{1,\ep/2}\tp{\T}}\lesssim\tnorm{\bf{Q}\tp{\upkappa,\psi}}_{C^{\ep/2}\tp{\T}} + \tnorm{\psi}_{C^{1-\ep/2}\tp{\T}}\lesssim 1 + \tnorm{\psi}_{C^{1 - \ep/2}\tp{\T}}
    \end{equation}
    for a finite implicit constant that is uniform in $\tp{\upkappa,\psi}\in\mathcal{C}$. Then, by the interpolation estimate
    \begin{equation}
        \tnorm{\psi}_{C^{1-\ep/2}\tp{\T}}\lesssim\tnorm{\psi}_{C^{\ep}\tp{\T}}^{2\ep/(2-\ep)}\tnorm{\psi}_{C^{1,\ep/2}\tp{\T}}^{(2-3\ep)/(2-\ep)},
    \end{equation}
    the bound~\eqref{the_contra_hyp}, and an absorption argument, we find that
    \begin{equation}
        \sup_{\tp{\upkappa,\psi}\in\mathcal{C}}\tp{|\upkappa| + \tnorm{\psi}_{C^{1,\ep/2}\tp{\T}}}<\infty.
    \end{equation}
    This contradicts Proposition~\ref{prop on blow up refinement I}.
\end{proof}

\section{Further refinements}\label{section on refinements}

\subsection{Conformal degeneracy}\label{subsection on blow up via coordinate degeneracy}

Combining Propositions~\ref{prop on blow up refinement I} and~\ref{prop on blow up refinement II} gets us most of the way to proving our first main result, Theorem~\ref{1_MAIN_THM}. To complete its justification, it remains to take the connected solution set $\mathcal{C}$ produced by Proposition~\ref{prop on existence of a global connected set of solutions} and study its intersection with the set $\mathcal{U}$, defined in equation~\eqref{the set of admissible domain coordinate functions}, of admissible coordinate functions. The content of this subsection is the following preliminary result that enumerates the essential properties of the set $\mathcal{U}$.
\begin{prop}[On the set of admissible coordinate functions]\label{prop on the set of admissible coordinate functions}
    The following hold where $\mathcal{U}$ is as defined in equation~\eqref{the set of admissible domain coordinate functions}.
    \begin{enumerate}
        \item The set $\mathcal{U}$ is open and $0\in\mathcal{U}$.
        \item For all $\psi\in\mathcal{U}$ we have $\mathscr{D}\tp{\psi}<\infty$, where $\mathscr{D}$ is the maximal distortion that is defined in equation~\eqref{the distortion}.
        \item For all $\psi\in\z{C}^{1,\al}\tp{\T}\cap\pd\mathcal{U}$ we have that
        \begin{equation}\label{exploding distortion near the boundary}
            \mathscr{D}\tp{\tilde{\psi}}\to\infty\text{ as }\mathcal{U}\ni\tilde{\psi}\to\psi.
        \end{equation}
    \end{enumerate}
\end{prop}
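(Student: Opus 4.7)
The plan attacks items (1)--(3) in sequence, with (3) carrying the bulk of the work. For (1), $R_0 = \m{id}$ gives $0\in\mathcal{U}$, while Lemma~\ref{lemma on bounds on the Cauchy Riemann solver} yields continuity of $\psi\mapsto R_\psi$ from $\z{C}^{1,\al}(\T)$ into $C^{1,\al}(\T\times[0,h];\R^2)$, making both the positive-Jacobian condition and injectivity stable under small perturbations (the latter via the standard persistence of $C^1$-diffeomorphisms of a compact manifold-with-boundary under small $C^1$-perturbations). For (2), $R_\psi|_{\T\times\{h\}}$ is a $C^{1,\al}$-embedding with $|\pd_1R_\psi(\cdot,h)|$ bounded below on the compact $\T$ (using $\det\grad R_\psi>0$ and the holomorphic identity $|\pd_1R_\psi|^2=\det\grad R_\psi$), so a mean-value bound near the diagonal combined with injectivity and compactness away from the diagonal yields $\mathscr{D}(\psi)<\infty$.

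The crux is item (3). Fix $\psi\in\partial\mathcal{U}$ and choose $\psi_n\in\mathcal{U}$ with $\psi_n\to\psi$; the goal is to show $\mathscr{D}(\psi_n)\to\infty$. The pivotal preliminary is that $R_\psi$ extends holomorphically to $\T\times(-h,h)$ by Schwarz reflection, using $e_2\cdot R_\psi=0$ on $\T\times\{0\}$. Since the extensions of the $R_{\psi_n}$ have non-vanishing derivative on the extended cylinder and converge to $R'_\psi$, Hurwitz's theorems (for non-vanishing and for injective holomorphic functions) force $R'_\psi$ to be nonvanishing on the \emph{open} cylinder $\T\times(-h,h)$ and $R_\psi$ to remain injective on $\T\times(0,h)$. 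Any failure of $\psi\in\mathcal{U}$ must therefore be localized to the top boundary $\T\times\{h\}$. Case (a): $\det\grad R_\psi(w_*,h)=0$ for some $w_*\in\T$; set $\ep_n:=|\pd_1R_{\psi_n}(w_*,h)|\to 0$ and $\del_n:=\ep_n^{1/\al}$, and invoke the $C^{1,\al}$ Taylor expansion
\[
|R_{\psi_n}(w_*+\del_n,h)-R_{\psi_n}(w_*,h)|\lesssim\ep_n\del_n+\del_n^{1+\al}\lesssim\ep_n\del_n,
\]
whence $\mathscr{D}(\psi_n)\gtrsim 1/\ep_n\to\infty$. Case (b): $\min\det>0$ while $R_\psi$ fails injectivity. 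Bottom-bottom is ruled out by the explicit diffeomorphism structure of $R_\psi|_{\T\times\{0\}}$; bottom-top and top-interior pairings would force the inverse Lipschitz constant of $R_{\psi_n}$ to diverge (distinct domain points staying separated while image points converge) while $\min|R'_{\psi_n}|$ remains bounded below, a contradiction. The only surviving option is a top-top self-intersection $R_\psi(w_0,h)\equiv R_\psi(w_1,h)\pmod{\Z e_1}$ with $w_0\ne w_1$, which immediately yields $\mathscr{D}(\psi_n)\to\infty$ by continuity of $R_\psi$.

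The principal obstacle lies in the elimination of the bottom-top and top-interior self-intersections in Case (b); these rule-outs rest on linking the inverse Lipschitz constant of $R_{\psi_n}$ to $\min_{\T\times\{h\}}|R'_{\psi_n}|$ through the very reflection-plus-maximum-principle mechanism used to localize zeros of $R'_\psi$ to the top boundary, and are what permit the cases to be cleanly separated into the quantitative Taylor argument in (a) and the purely topological argument in (b).
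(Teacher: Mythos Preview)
Your overall architecture matches the paper's: items (1) and (2) via continuity and a reverse-Lipschitz/compactness argument, and item (3) via Schwarz reflection plus Hurwitz to localize any defect of $R_\psi$ to the top boundary $\T\times\{h\}$. Your direct Taylor-expansion argument in Case~(a) is a valid alternative to the paper's contradiction-based treatment of that case, and the top--top subcase of Case~(b) is handled correctly.

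The genuine gap lies in your elimination of the top--interior (and top--bottom) subcase of Case~(b). You assert that the divergence of the global inverse Lipschitz constant of $R_{\psi_n}$ contradicts $\min|R'_{\psi_n}|$ remaining bounded below, but no such implication holds: a uniform lower bound on $|R'|$ yields only \emph{local} bi-Lipschitz control and is entirely compatible with two well-separated points $p,q$ satisfying $|R_{\psi_n}(p)-R_{\psi_n}(q)|\to0$. The maximum principle applied to the harmonic function $\log|R'_\psi|$ on the reflected strip tells you where $\min|R'_\psi|$ is attained, not that $\min|R'|$ controls global injectivity; the very scenario you wish to exclude (a limit $R_\psi$ with $\min|R'_\psi|>0$ identifying a top-boundary point with an interior point) is consistent with every derivative bound you invoke. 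The paper closes this gap topologically rather than quantitatively: if $q\in\T\times[0,h)$, then after reflection $q$ lies in the \emph{interior} of the doubled strip $O$, where the limit map $f$ is an injective open map; hence $f(q)=f(p)$ is an interior point of $f(O)$, and by $C^{1,\al}$ convergence together with local invertibility of $f$ near $q$ it is also an interior point of $f_n(O)$ for all large $n$. But $p\in\pd O$ and $f_n$ is a diffeomorphism up to the boundary, so $f_n(p)\in\pd f_n(O)$; since $f_n(p)\to f(p)$, this is a contradiction. You should replace the inverse-Lipschitz reasoning with this boundary-versus-interior argument.
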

\begin{proof}
    We begin with the proof of the first item. Since $R_0\tp{w,z} = (w,z)$ for all $\tp{w,z}\in\T\times[0,h]$, we obviously have $0\in\mathcal{U}$. 

    The map 
    \begin{equation}
        \z{C}^{1,\al}\tp{\T}\ni\psi\mapsto\det\grad R_\psi\in C^0\tp{\T\times[0,h]}
    \end{equation}
    is continuous (thanks to Lemma~\ref{lemma on bounds on the Cauchy Riemann solver}) and so for each $\psi\in\mathcal{U}$ there exists $\epsilon_\psi>0$ such that 
    \begin{equation}
        \forall\;\tnorm{\psi - \tilde{\psi}}_{C^{1,\al}\tp{\T}}<\epsilon_\psi,\;\min_{\T\times[0,h]}\det\grad R_{\tilde{\psi}}>0.
    \end{equation}

    We now claim that for each $\psi\in\mathcal{U}$, there exists $c_\psi\in\R^+$ with the property that for all $p,q\in\T\times[0,h]$ with $p\neq q$ we have
    \begin{equation}\label{finite_distortion}
        \m{dist}_{\T\times[0,h]}\tp{p,q}\le c_\psi\m{dist}_{\T\times[0,h]}\tp{R_\psi\tp{p},R_{\psi}\tp{q}}.
    \end{equation}
    If this were false, then we could find convergent sequences $\tcb{p_n}_{n\in\N},\tcb{q_n}_{n\in\N}\subset\T\times[0,h]$ with $p_n\neq q_n$ for all $n\in\N$ and points $p,q\in\T\times[0,h]$ with $p_n\to p$ and $q_n\to q$ as $n\to\infty$ and
    \begin{equation}\label{setting_up_contra_}
        \f{1}{2^n}>\f{\m{dist}_{\T\times[0,h]}\tp{R_\psi\tp{p_n},R_\psi\tp{q_n}}}{\m{dist}_{\T\times[0,h]}\tp{p_n,q_n}}.
    \end{equation}
    If $p\neq q$, then passing to the limit as $n\to\infty$ in estimate~\eqref{setting_up_contra_} shows $R_\psi\tp{p} = R_\psi(q)$, which is impossible since $\psi\in\mathcal{U}$ implies that $R_\psi$ is injective. Thus it must be that $p = q$.

    For $n\in\N$ sufficiently large, we can use the fundamental theorem of calculus and the Cauchy-Riemann equations satisfied by $R_\psi$ to write
    \begin{equation}\label{this_will_be_reffed}
        \f{\m{dist}_{\T\times[0,h]}\tp{R_\psi\tp{p_n},R_\psi\tp{q_n}}}{\m{dist}_{\T\times[0,h]}\tp{p_n,q_n}} = \f{1}{\sqrt{2}}\babs{\int_0^1\grad R_\psi\tp{\tau p_n + (1-\tau)q_n}\;\m{d}\tau}<\f{1}{2^n}.
    \end{equation}
    Passing to the limit $n\to\infty$ in the above now shows that $\tabs{\grad R_\psi\tp{p}} = 0$. This is a contradiction of the fact that $\psi\in\mathcal{U}$ implies that $\det\grad R_\psi>0$ on the whole of $\T\times[0,h]$.

    So indeed the claimed reverse-Lipschitz bound of equation~\eqref{finite_distortion} must be true. In turn we find that, given $\psi\in\mathcal{U}$ and $c_\psi$ obeying~\eqref{finite_distortion}, there exists $\tilde{\epsilon}_\psi>0$ such that
    \begin{equation}
        \forall\;\tnorm{\psi - \tilde{\psi}}_{C^{1,\al}\tp{\T}}<\tilde{\epsilon}_\psi\;\text{and}\;\forall\;p,q\in\T\times[0,h],\;\m{dist}_{\T\times[0,h]}\tp{p,q}\le\tp{c_\psi/2}\m{dist}_{\T\times[0,h]}\tp{R_{\tilde{\psi}}\tp{p},R_{\tilde{\psi}}\tp{q}}.
    \end{equation}
    We may now conclude that $\mathcal{U}$ is indeed open, since we have shown that for all $\psi\in\mathcal{U}$ we have $\tilde{\psi}\in\mathcal{U}$ whenever $\tnorm{\tilde{\psi} - \psi}_{C^{1,\al}\tp{\T}}<\min\tcb{\epsilon_\psi,\tilde{\epsilon}_\psi}$.

    The second item is a special consequence of the preceding analysis since for all $\psi\in\mathcal{U}$ we have $\mathscr{D}\tp{\psi}\le c_\psi$ where $c_\psi$ is the finite positive constant as in~\eqref{finite_distortion}.

    Let us turn our attention to the third item. We shall argue by contradiction. If~\eqref{exploding distortion near the boundary} is false, then there would exist $\psi\in\pd\mathcal{U}$, $\tcb{\psi_n}_{n\in\N}\subset\mathcal{U}$, and $M<\infty$ such that
    \begin{equation}\label{not_another_one}
        \tnorm{\psi - \psi_n}_{C^{1,\al}\tp{\T}}\to0\text{ as }n\to\infty\text{ but }\sup_{n\in\N}\mathscr{D}\tp{\psi_n}\le M.
    \end{equation}
    We are going to associate with this sequence of real-valued functions a sequence of holomorphic $\C$-valued functions.

    Define the (doubled) complex domain
    \begin{equation}
        O = \tcb{\zeta\in\C\;:\;\tabs{\m{Im}\zeta}<h}
    \end{equation}
    and the sequence of functions $\tcb{f_n}_{n\in\N}$ via
    \begin{equation}
        f_n(\zeta) = \begin{cases}
            e_1\cdot R_{\psi_n}\tp{\m{Re}\zeta,\m{Im}\zeta} + \ii e_2\cdot R_{\psi_n}\tp{\m{Re}\zeta,\m{Im}\zeta}&\text{if }0\le\m{Im}\zeta<h,\\
            e_1\cdot R_{\psi_n}\tp{\m{Re}\zeta,-\m{Im}\zeta} - \ii e_2\cdot R_{\psi_n}\tp{\m{Re}\zeta,-\m{Im}\zeta}&\text{if }-h<\m{Im}h<\infty
        \end{cases}\text{ for all }\zeta\in O.
    \end{equation}
    Similarly, we can also define $f:O\to\C$ in terms of $R_\psi$ as above. 
    
    By virtue of the satisfaction of the Cauchy-Riemann equations by $\tcb{R_{\psi_n}}_{n\in\N}$, $R_{\psi}$ and the Schwarz reflection principle (see, e.g., Section 1 in Chapter IX of Conway~\cite{MR503901}), the maps $f$ and $\tcb{f_n}_{n\in\N}$ are holomorphic functions $O\to\C$. Moreover, these maps extend to class $C^{1,\al}$ functions on the closure $\Bar{O}$. Also, since $\tcb{\psi_n}_{n\in\N}\subset\mathcal{U}$, for each $n\in\N$ it follows that $f_n:\Bar{O}\to\C$ is injective and $\min_{\Bar{O}}\tabs{f_n'}>0$, where the prime denotes complex differentiation.

    The assumed convergence on the left hand side of~\eqref{not_another_one}, implies that $f_n\to f$ in $C^{1,\al}\tp{\Bar{O};\C}$ as well. We may then invoke variations on Hurwitz's theorem (see, e.g., Theorems 5 and 6 in Chapter 3, Section 2 of Narasimhan and Nievergelt~\cite{MR1803086}), after noting that $f'$ is not identically zero, to conclude that: 1) $f$ is injective in the interior region $O$ and $2)$ the complex derivative $f'$ is nowhere vanishing in $O$.

    We unpack these conclusions for $f$ to see what they imply about $R_\psi$. We deduce that: 1) $R_\psi:\T\times[0,h)\to\T\times[0,\infty)$ is injective and 2) $\det\grad R_\psi:\T\times[0,h)\to\R$ is everywhere positive. Since $\mathcal{U}$ is open and $f\in\pd\mathcal{U}$, we have that $f\not\in\mathcal{U}$. Synthesizing these observations, we conclude that one of the following cases must occur.
    \begin{itemize}
        \item There exists $w\in\T$ such that $\det\grad R_\psi\tp{w,h} = 0$.
        \item There exists $p\in\T\times\tcb{h}$ and $q\in\T\times[0,h]$ such that $R_\psi(p) = R_\psi(q)$
    \end{itemize}
   We can use the right hand side inequality of~\eqref{not_another_one} to eliminate the first case above. Indeed, for every $w_0,w_1\in\T$ and $n\in\N$ the contradiction hypothesis is that
   \begin{equation}\label{the_above}
       \m{dist}_{\T}\tp{w_0,w_1}\le M\m{dist}_{\T\times\tp{0,h}}\tp{R_{\psi_n}\tp{p,h},R_{\psi_n}\tp{q,h}}.
   \end{equation}
   Taking the limit $n\to\infty$ in~\eqref{the_above} implies that $|\pd_1 R_\psi(w,h)|\ge1/M$ for all $w\in\T$. So the first case does not occur since $\det\grad R_\psi = \tabs{\pd_1R_\psi}^2$; so $\det\grad R_\psi$ is everywhere positive on $\T\times\tcb{h}$ and hence on $\T\times[0,h]$.

   Thus we must have the existence of $p$ and $q$ as in the second item. It must be the case that $q\in\T\times\tcb{h}$ as well, as can be deduced by considering
   \begin{equation}\label{almost_done}
       R_\psi(q) = R_\psi(p) = \lim_{n\to\infty}R_{\psi_n}(p).
   \end{equation} 
   If $q\in\T\times[0,h)$, then $f_n$, for sufficiently large $n$, would, because of~\eqref{almost_done} and the fact that $f:O\to\C$ is injective with nowhere vanishing derivative, map $\pd O$ into the interior of $f_n(O)$, which cannot happen since $f_n$ extends to a $C^{1,\al}$ diffeomorphism of the closures. In turn we get
   \begin{equation}
       R_\psi(p_1,h) = R_\psi(q_1,h)
   \end{equation}
   which is also impossible given equation~\eqref{the_above}. Thus we have contradicted the assumption that the limit~\eqref{exploding distortion near the boundary} was false.
   \end{proof}

\subsection{Synthesis}\label{subsection on synthesis}

We may now combine Proposition~\ref{prop on the set of admissible coordinate functions} with the results of Section~\ref{section on analysis of the nonlinear operator} to complete the proof of our first main theorem.
\begin{proof}[Proof of Theorem~\ref{1_MAIN_THM}]
    Thanks to Propositions~\ref{prop on existence of a global connected set of solutions}, \ref{prop on blow up refinement I}, and~\ref{prop on blow up refinement II} we are granted a connected solution set $\tp{0,0}\in\mathcal{C}\subset\R\times\z{C}^{1,\al}\tp{\T}$ for the desired equation~\eqref{reformulation for the coordinate function psi} such that the function~\eqref{the_blow_up_1} is unbounded and, in the special case $\mathsf{f} = 0$, the function~\eqref{the_blow_up_2} is unbounded. We need only verify the blow-up of the quantities~\eqref{the blow up norm with distortion 1} or~\eqref{the blow up norm with distortion 2} on the restricted solution set $\mathcal{C}\cap\tp{\R\times\mathcal{U}}$, with $\mathcal{U}$ being defined in~\eqref{the set of admissible domain coordinate functions}.

    If $\mathcal{C}\subseteq\tp{\R\times\mathcal{U}}$, then of course there is nothing more to prove. Since $\mathcal{C}$ is connected, the only other case that can occur is that $\mathcal{C}\cap\tp{\R\times\pd\mathcal{U}}\neq\es$; more precisely, we must have $\Bar{\mathcal{C}\cap\tp{\R\times\mathcal{U}}}\cap\tp{\R\times\pd\mathcal{U}}\neq\es$. But then, by the third item of Proposition~\ref{prop on the set of admissible coordinate functions} we find that the function
    \begin{equation}
        \mathcal{C}\cap\tp{\R\times\mathcal{U}}\ni\tp{\upkappa,\psi}\mapsto\mathscr{D}\tp{\psi}\in\R
    \end{equation}
    is necessarily unbounded.
\end{proof}

Our second main theorem is proved by simply taking our solutions constructed to the fully nonlinear pseudodifferential equation~\eqref{reformulation for the coordinate function psi} and unpacking them to solutions to the parent free boundary problem~\eqref{traveling FBIDF}.

\begin{proof}[Proof of Theorem~\ref{2_MAIN_THM}]
    The mapping $R_\psi:\T\times(0,h)\to\T\times\R$ is smooth since the components of $R_\psi$ satisfy the Cauchy-Riemann equations as a consequence of Lemma~\ref{lemma on bounds on the Cauchy Riemann solver}. The same lemma also implies that $R_\psi:\T\times[0,h]\to\T\times\R$ is class $C^{1,\al}$. The inclusion of $\tp{\upkappa,\psi}\in\mathcal{C}\cap\tp{\R\times\mathcal{U}}$ and the definition of $\mathcal{U}$~\eqref{the set of admissible domain coordinate functions} completes the verification of the first item.

    The proof of the second item traverses in reverse Section~\ref{subsection on conformal and nonlocal reformulation}. Let us fix $\tp{\upkappa,\psi}\in\mathcal{C}\cap\tp{\R\times\mathcal{U}}$. We shall first construct the flattened pressure variable $q\in C^{1,\al}\tp{\T\times\tp{0,h}}$ as to solve system~\eqref{flattened formulation of the darcy flow}. Let us define $f^1\in C^{\al}\tp{\T\times\tp{0,h}}$, $f^2\in C^\al\tp{\T\times\tcb{0}}$, $f^3\in C^{1,\al}\tp{\T\times\tcb{h}}$ via
    \begin{equation}\label{New_York_City}
        f^1 = \tp{\upkappa^2/2}\tabs{\grad R_\psi}^2\tp{\grad\cdot\mathsf{f}}\circ R_\psi,\;f^2 = \upkappa^2\tp{f\circ R_\psi}\cdot\tp{\pd_1 R_\psi}^\perp,\text{ and }f^3=\m{g}\psi + \upkappa^2\tp{\upphi\circ R_\psi}.
    \end{equation}
    By standard Schauder theory there exits a unique function $q\in C^{1,\al}\tp{\T\times\tp{0,h}}$ that is a solution to the equations
    \begin{equation}\label{the schauder problem}
        \Delta q = f^1\text{ in }\T\times\tp{0,h},\;\pd_2q = f^2\text{ on }\T\times\tcb{0},\text{ and }q = f^3\text{ on }\T\times\tcb{h}.
    \end{equation}
    Now we use that $\bf{P}\tp{\upkappa,\psi} = 0$, with $\bf{P}$ as defined in~\eqref{formal_operator_encoding}, to check that the solution to~\eqref{the schauder problem} has the correct Neumann data on the top. Indeed, rearrangement of $\bf{P}\tp{\upkappa,\psi} = 0$ and substitution of definitions~\eqref{New_York_City} show that
    \begin{equation}
        \m{Tr}_{\T\times\tcb{h}}\pd_2q = G f^3 + S(f^1,f^2) = \m{c}\pd_1\psi + \upkappa^2\m{Tr}_{\T\times\tcb{h}}\tp{\mathsf{f}\circ R_\psi}\cdot\tp{\pd_1 R_\psi}^\perp,
    \end{equation}
    where we recall that $G$ and $S$ are defined in equation~\eqref{help_solve_1}. Thus we have a solution pair $\tp{\psi,q}$ to system~\eqref{flattened formulation of the darcy flow} with $\kappa = \upkappa^2$.

    Now we can straightforwardly define our solution to the equations of traveling free boundary incompressible Darcy flow~\eqref{traveling_FBIDF}. We let $\Omega_\psi = R_\psi\tp{\T\times\tp{0,h}}$ and $\m{p}\in C^{1,\al}\tp{\Omega_\psi}$ and $\m{v} \in C^\al\tp{\Omega_\psi;\R^2}$ be determined via
    \begin{equation}
        \m{p}(x) =  q\circ\tp{R_\psi}^{-1}(x) + \m{g}\tp{h - e_2\cdot x}\text{ and }\m{v}\tp{x} = \upkappa^2\mathsf{f}\tp{x} - \grad\m{p}\tp{x} - \m{g}e_2
    \end{equation}
    for all $x\in\Bar{\Omega_\psi}$. It is then a simple exercise in local elliptic regularity to see that $\m{p}$ and $\m{v}$ are class $C^4$ in the interior of their domain and solve the correct equations~\eqref{traveling_FBIDF} in $\Bar{\Omega_\psi}$, given that we already know $\tp{\psi,q}$ solve~\eqref{flattened formulation of the darcy flow} with $\kappa = \upkappa^2$.
\end{proof}

\section*{Acknowledgments}

The second author would like to thank Sijue Wu for helpful discussions on conformal flattening maps.

\bibliographystyle{abbrv}
\bibliography{bib.bib}
\end{document}